\newtheorem{thm}{Theorem}[section]
\newtheorem{cor}[thm]{Corollary}
\newtheorem{lem}[thm]{Lemma}
\theoremstyle{definition}
\newtheorem{defn}[thm]{Definition}
\theoremstyle{remark}
\newtheorem{rem}[thm]{Remark}
\numberwithin{equation}{section}
\newcommand{\re}{{\rm Re}}
\newcommand{\HH}{\mathbb{H}}
\newcommand{\cS}{\mathcal{S}}
\newcommand{\cB}{\mathcal{B}}
\newcommand{\cN}{\mathcal{N}}
\newcommand{\cH}{\mathcal{H}}
\newcommand{\RR}{\mathbb{R}}
\newcommand{\BB}{\mathbb{B}}
\newcommand{\CC}{\mathbb{C}}
\newcommand{\cD}{\mathcal{D}}
\newcommand{\cL}{\mathcal{L}}
\newcommand{\Ran}{{\rm Ran}}
\newcommand{\Ker}{{\rm Ker}}
\begin{document}
\title[The spectral theorem  based on the $S$-spectrum]{The spectral theorem for quaternionic unbounded normal operators based on the $S$-spectrum} %
\author[D. Alpay]{Daniel Alpay}
\address{(DA) Department of Mathematics\\
Ben-Gurion University of the Negev\\
Beer-Sheva 84105 Israel} \email{dany@math.bgu.ac.il}

\author[F. Colombo]{Fabrizio Colombo}
\address{(FC) Politecnico di
Milano\\Dipartimento di Matematica\\Via E. Bonardi, 9\\20133
Milano, Italy}
\email{fabrizio.colombo@polimi.it}

\author[D. P. Kimsey]{David P. Kimsey}
\address{(DPK)Department of Mathematics\\
Ben-Gurion University of the Negev\\
Beer-Sheva 84105 Israel}
\email{dpkimsey@gmail.com}

\subjclass[2010]{MSC: 35P05, 47B32, 47S10}

\thanks{D. Alpay thanks the Earl Katz family for endowing the chair
which supported his research. D. P. Kimsey gratefully acknowledges the support of a Kreitman postdoctoral fellowship. F. Colombo
acknowledges the Center for Advanced Studies of the Mathematical
Department of the Ben-Gurion University of the Negev for the
support and the kind hospitality during the period in which part
of this paper has been written.}

\date{\today}%

\begin{abstract} In this paper we prove the spectral theorem for quaternionic unbounded normal operators using the notion of $S$-spectrum.
The proof technique consists of first establishing a spectral theorem for quaternionic bounded normal operators and then using a transformation which maps a quaternionic unbounded normal operator
to a quaternionic bounded normal operator. With this paper we complete the foundation  of spectral analysis of  quaternionic operators.
The $S$-spectrum has been introduced  to define the  quaternionic functional calculus but it turns out to be the correct object also for the spectral theorem for quaternionic normal operators.
The fact that the correct notion of spectrum for quaternionic operators was not previously known has been one of the main obstructions to fully understanding the
spectral theorem in this setting.
A prime motivation for studying the spectral theorem for quaternionic unbounded normal operators is
given by the subclass of unbounded anti-self adjoint quaternionic operators which play a crucial role in the quaternionic quantum mechanics.
\end{abstract}

\maketitle

\section{Introduction}
\label{sec:intro}
\setcounter{equation}{0}

In the recent paper \cite{acks2} a spectral theorem for quaternionic unitary operators based on the $S$-spectrum was proved using an extension of Herglotz's theorem to the quaternions.
In this paper, inspired by  \cite{acks2}, we treat the more general case of unbounded normal quaternionic operators.

The interest in spectral theory for quaternionic operators is motivated by the celebrated paper of Birkhoff and von Neumann, see \cite{BvN},
who showed that Schr\"odinger equation can be written only in the complex or quaternionic setting.
Several authors, have given important contributions to the development of the quaternionic version of quantum mechanics, see \cite{adler, 12, 14, 21},
but a correct notion of spectrum for quaternionic operators was still missing until the introduction of the $S$-spectrum, see, e.g., \cite{CSS}.
As it is well known, in the classical formulation of quantum mechanics the spectral theory of unbounded self-adjoint operators play a crucial role.
In the fundamental paper \cite{vonSPECT}, von Neumann used the spectral theorem for unitary operators to prove the spectral theorem for unbounded self-adjoint operators.
In quaternionic quantum mechanics the most important quaternionic operators are unbounded anti self-adjoint operators;
these operators are a particular case of quaternionic unbounded normal operators treated in this paper.

Our strategy to prove the spectral theorem is as follows: first we deduce the spectral theorem for quaternionic
bounded normal operators. The proof is based on a continuous functional calculus defined in \cite{GMP}
and a classical version of the Riesz representation theorem. After we establish a spectral theorem for quaternionic bounded normal operators,
we deduce a spectral theorem for quaternionic unbounded normal operators from the bounded case and from a suitable transformation.

With the quaternionic spectral theorem based on the $S$-spectrum we complete the foundation of the quaternionic spectral theory that started
some years ago with the introduction of the $S$-functional calculus.
In fact using the notion of slice hyperholomorphic functions, see \cite{CSS}, and the $S$-spectrum it is possible to define the quaternionic version of the Riesz-Dunford functional calculus which we now call quaternionic functional calculus or $S$-functional calculus.

We give a quick explanation of the reason why a consistent spectral theory for quaternionic operators is not so obvious.
For simplicity consider a complex bounded operator $A: \mathcal{X} \to \mathcal{X}$ on a complex Banach space $\mathcal{X}$. The spectrum of $A$ is defined as
$$
\sigma(A)=\{\lambda\in \mathbb{C}\ :\ \lambda I_{\mathcal{X}} -A \ {\rm  is \ not \ invertible } \ B(\mathcal{X})\}
$$
where $B(\mathcal{X})$ denotes the Banach space of all bounded linear operators on $\mathcal{X}$.
Given a normal (bounded) linear operator $T$ on a complex Hilbert space,  in the spectral theorem
$$
T=\int_{\sigma(T)}\lambda \, dE(\lambda)
$$
the unique
spectral measure $E(\lambda)$ associated to $T$ is supported on $\sigma(T)$, see, e.g., \cite{ds2}.
The above notion of spectrum also appears in the Riesz-Dunford functional calculus, see \cite{ds}, which is based on the Cauchy formula of holomorphic functions
in which the Cauchy kernel is replaced by the resolvent operator
$(\lambda I_{\mathcal{X}} -T)^{-1}$. Taking a holomorphic function $h$ defined on an open set that contains the spectrum, we can use the Cauchy formula to define the linear operator $h(T)$.

From a historical view point a first attempt to generalize the classical notion of spectrum
to quaternionic linear operators was to readapt the definition. To see the inconsistencies that occur consider a right linear quaternionic operator
$T:\mathcal{V}\to \mathcal{V}$ acting on a quaternionic two-sided Banach space $\mathcal{V}$.
The symbol $\mathcal{B}(\mathcal{V})$ denotes the Banach space of all bounded right linear quaternionic operators on $\mathcal{V}$.
The left spectrum $\sigma_L(T)$ of $T$ is related to the left-resolvent operator
$(s I_{\mathcal{V}}-T)^{-1}$, i.e.,
$$
\sigma_L(T)=\{\text{$s\in \mathbb{H}$ {\rm:} $s I_{\mathcal{V}}-T$ is not invertible in $\mathcal{B}(\mathcal{V})$}\},
$$
where
$$(s I_{\mathcal{V}})(v) = sv, \quad v \in \mathcal{V}.$$

The right spectrum $\sigma_R(T)$ of $T$ is associated with the  right eigenvalue problem, i.e., the search for nonzero vectors $v$
satisfying $T(v)=vs$.
Observe that the operator $I_{ \mathcal{V} } s -T$ associated with the right eigenvalue problem is not linear. Consequently, it is not clear what the resolvent operator ought to be.
The quaternionic left-resolvent operator $(s I_{\mathcal{V}}-T)^{-1}$, as far as we know, is not hyperholomorphic in any sense. Consequently, the left-resolvent operator is not
useful to define a hyperholomorphic quaternionic functional calculus. When we consider the right spectrum we just have the notion of eigenvalues.
The above discussion shows that there is a problem in adapting the classical notion of spectrum to either the left or right quaternionic spectrum.

As we shall see, relative to obtaining a spectral theorem for quaternionic normal operators, the appropriate notion of spectrum is a new notion of spectrum which is as follows.
The $S$-spectrum, see \cite{CSS}, is defined as
$$
\sigma_S(T)=\{\text{$s\in \mathbb{H}$ {\rm :} $T^2-2 {\rm Re}(s)T+|s|^2 I_{\mathcal{V}}$ is not invertible in $\mathcal{B}(\mathcal{V})$}\},
$$
where $s=s_0+s_1e_1+s_2e_2+s_3e_3$ is a quaternion,
$\{1, e_1, e_2, e_3 \}$ is the standard basis of the quaternions, ${\rm Re}(s)=s_0$ is the real part and the norm $|s| = \sqrt{s_0^2+s_1^2+s_2^2+s_3^2}$.

We are now ready to illustrate our main result, the spectral theorem for normal quaternionic operators.
We limit the  discussion to the case of bounded normal operators but the theorem holds also for unbounded operators, see Theorem \ref{thm:August27t1}.

Consider the complex plane $\mathbb{C}_j:=\mathbb{R}+j\mathbb{R}$, for $j\in \mathbb{S}$, where $\mathbb{S}$ is the unit sphere of purely imaginary quaternions. Let $\CC_j^+$ denote all $p \in \CC_j$ with ${\rm Im}( p) \geq 0$.
Observe that $\mathbb{C}_j$ can be identified with a complex plane since $j^2=-1$ for every $j\in \mathbb{S}$.
If $T$ be a (bounded) right linear normal operator on a quaternionic Hilbert space, then for $j\in \mathbb{S}$
there is a unique spectral measure $E$ and a Hilbert basis $\cN_j$ of $\cH$ so that
$$
T=\int_{\sigma_S(T)\cap \mathbb{C}_j^+}\, p\, dE(p).
$$
To show the deep difference between complex spectral theory and the quaternionic spectral theory, we recall the quaternionic version of the Riesz-Dunford functional calculus,
which suggests the notion of $S$-spectrum, see \cite{formulation, JGA}.
 This calculus involves two resolvent operators, namely a left and right $S$-resolvent operators given by
\begin{equation}
S_L^{-1}(s,T):=-(T^2-2{\rm Re}(s) T+|s|^2I_{\mathcal{V}})^{-1}(T-\overline{s}I_{\mathcal{V}}),\ \ \ s \in \rho_S(T)
\end{equation}
and
\begin{equation}
S_R^{-1}(s,T):=-(T-\overline{s} I_{\mathcal{V}})(T^2-2{\rm Re}(s) T+|s|^2I_{\mathcal{V}})^{-1},\ \ \ s \in  \rho_S(T),
\end{equation}
where $T\in \mathcal{B}(\mathcal{V})$ and $\rho_S(T)=\mathbb{H}\setminus \sigma_S(T)$ is the $S$-resolvent set.
As one can see the $S$-spectrum is suggested by the $S$-resolvent operators.

Let $\Omega\subset \mathbb{H}$ be a suitable domain that contains the $S$-spectrum of $T$.
We define the quaternionic functional calculus for left slice hyperholomorphic functions $f:\Omega \to \mathbb{H}$ as
\begin{equation}\label{quatinteg311def}
f(T)={{1}\over{2\pi }} \int_{\partial (\Omega\cap \mathbb{C}_j)} S_L^{-1} (s,T)\  ds_j \ f(s),
\end{equation}
where $ds_j=- ds j$;
for right slice hyperholomorphic functions, we define \begin{equation}\label{quatinteg311rightdef}
f(T)={{1}\over{2\pi }} \int_{\partial (\Omega\cap \mathbb{C}_j)} \  f(s)\ ds_j \ S_R^{-1} (s,T).
\end{equation}
These definitions are well posed since the integrals depend neither on the open set $\Omega$ nor on the complex plane
$\mathbb{C}_j$.
Moreover, the resolvent equation, see \cite{acgs}, involves both $S$-resolvent operators,
for $s$ and $p\in \rho_S(T)$ we have
\begin{align}
S_R^{-1}(s,T)S_L^{-1}(p,T)=& \; \{ (S_R^{-1}(s,T)-S_L^{-1}(p,T))p
-\overline{s}(S_R^{-1}(s,T)-S_L^{-1}(p,T)) \} \nonumber \\
\times& \; (p^2-2s_0p+|s|^2)^{-1}. \label{RLresolv}
\end{align}

Even though there are deep differences with respect to the classical resolvent equation for complex operators,
all of the results that hold for the Riesz-Dunford functional calculus also hold for the quaternionic functional calculus.
{\it We now claim that to replace the complex spectral theory with the quaternionic spectral theory we have to replace the
classical spectrum with the $S$-spectrum.}

We conclude with some final remarks. In the case $T$ is a right linear operator on a finite-dimensional Hilbert space, the $S$-spectrum of $T$ coincides with the set of right eigenvalues of $T$; in the general case of a linear operator, the point $S$-spectrum coincides with the set of right eigenvalues. In the literature the spectral theorem for quaternionic normal matrices based on the right spectrum is proved in \cite{fp}.
In the literature, there are some papers on the quaternionic spectral theorem, see, e.g., \cite{Teichmueller, 14, sc, Viswanath}. However, the notion of spectrum in the papers \cite{14, sc, Viswanath} is not made clear.
In \cite{spectcomp}, a spectral theorem based on $S$-spectrum is proved for compact normal operators on a quaternionic Hilbert space.
We point out that the $S$-resolvent operators are also used in Schur analysis in
the realization of Schur functions in the slice hyperholomorphic setting see \cite{aacks, acs1, acs2, acs3}
and \cite{MR2002b:47144, adrs} for the classical case.
In the papers \cite{perturbation, evolution, GR} the problem of the generation of
quaternionic groups and semigroups is treated using the $S$-spectrum.

The plan of the paper as follows: In Section \ref{sec:prelim} we give some preliminaries; in Section \ref{sec:funcCalc} we recall a continuous functional calculus
for bounded normal operators; in Section \ref{sec:STB} we prove the spectral theorem for bounded normal operators based on the $S$-spectrum; in Section \ref{sec:SIs} we introduce spectral integrals; finally, in Section \ref{sec:STU}
we prove the spectral theorem for unbounded normal operators based on the $S$-spectrum.

\section{Preliminaries}
\label{sec:prelim}
Let $\cH$ be a right linear quaternionic Hilbert space with an $\mathbb{H}$-valued inner product $\langle \cdot, \cdot \rangle$ which satisfies, for every $\alpha$, $\beta\in \mathbb{H}$, and $x$, $y$, $z\in \cH$, the relations:
\begin{align*}
\langle x, y \rangle =& \; \overline{ \langle y, x \rangle }. \\
\langle x, x \rangle \geq& \; 0 \quad {\rm and} \quad \| x \|^2 := \langle x, x \rangle = 0 \Longleftrightarrow x = 0. \\
\langle x \alpha + y \beta, z \rangle =& \; \langle x, z \rangle \alpha + \langle y, z \rangle \beta. \\
\langle x, y \alpha + z \beta \rangle =& \; \bar{\alpha} \langle x, y \rangle + \bar{\beta} \langle x, z \rangle.
\end{align*}
We call an operator $T: \cD(\cH) \to \cH$ {\it right linear} if
$$
T(x \alpha + y \beta) = (Tx ) \alpha + (Ty) \beta,
$$
for all $x, y$ in the domain of $T$ and $\alpha, \beta \in \mathbb{H}$. The set of right linear operators on $\cH$ will be denoted by $\cL(\cH)$. Given $T \in \cL(\cH)$, the domain of $T$ will be denoted by $\cD(T)$
and the range and kernel of $T$ will be given by
$$\Ran(T) = \{\text{$y \in \cH${\rm :} $Tx = y$ for $x \in \cD(\cH)$}\}$$
and
$$\Ker(T) = \{\text{$x \in \cD(T)${\rm :} $Tx = 0$}\},$$
respectively.
We call an operator $T \in \cL(\cH)$ {\it bounded} if
$$\| T \| := \sup_{\| x \| \leq 1} \| Tx \| < \infty.$$

In the sequel $ \cB(\cH)$ will denote the Banach space of all bounded right linear operators on $ \cH$ endowed with the natural norm.

\begin{defn}
\label{def:Dec14tt9}
An operator $T \in \cL(\cH)$ is called {\it closed} if the set $\{ (x, Tx): x \in \cH \}$ is a closed subset of $\cH\times
\cH$. Let $S$ and $T$ both belong $\cL(\cH)$. We write $S = T$ if $\cD(S) = \cD(T)$ and $Sx = Tx$ for all $x \in \cD(S) = \cD(T)$. We write $S \subseteq T$ if $\cD(S) \subseteq \cD(T)$ and $Sx = Tx$ for all $x \in \cD(S)$. Clearly, $S = T$ if and only if $S \subseteq T$ and $T \subseteq S$. An operator $T \in \cL(\cH)$ is called {\it closable} if there exists a closed operator $\overline{T} := W \in \cL(\cH)$ so that $T \subseteq W$.
\end{defn}

\begin{defn}
\label{def:Dec11rro3}
Given $T \in \cL(\cH)$ which is densely defined, we let $T^* \in \cL(\cH)$ denote the unique operator so that
$$\langle Tx, y \rangle = \langle x, T^* y \rangle, \quad x\in \cD(T).$$
The domain of $T^*$ is given by
$$\cD(T^*) = \{\text{$y \in \cH$ : there exists $z \in \cH$ with $\langle Tx, y \rangle = \langle x, z \rangle$}\}.$$
\end{defn}

\begin{thm}
\label{thm:Dec14yy1}
If $T \in \cL(\cH)$ is densely defined and $W \in \cL(\cH)$, then{\rm :}
\begin{enumerate}
\item[(i)] $T^* \in \cL(\cH)$ is closed.
\item[(ii)] ${\rm \Ran}(T)^{\perp} = {\rm Ker}(T^*)$.
\item[(iii)] If $T \subseteq W$, then $W^* \subseteq T^*$.
\end{enumerate}
\end{thm}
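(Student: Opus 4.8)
The plan is to read off all three assertions directly from the defining relation $\langle Tx, y\rangle = \langle x, T^*y\rangle$ ($x \in \cD(T)$) of Definition~\ref{def:Dec11rro3}, mimicking the complex Hilbert space argument; the only point at which the quaternions enter is the noncommutative sesquilinearity recorded in Section~\ref{sec:prelim}. Two facts will be used repeatedly: (a) the inner product is separately continuous in each slot, since $\abs{\langle u, v\rangle} \leq \norm{u}\,\norm{v}$; and (b) the vector $z$ appearing in the definition of $\cD(T^*)$ is unique, because $\langle x, z_1\rangle = \langle x, z_2\rangle$ for all $x$ in a dense set forces $z_1 = z_2$ — this uniqueness is also what makes $T^*$ well defined as a map, and a short computation with the conjugate-linear second slot, $\langle Tx, y_1\alpha + y_2\beta\rangle = \bar\alpha\langle Tx, y_1\rangle + \bar\beta\langle Tx, y_2\rangle$, shows $T^*$ is right linear, so that indeed $T^* \in \cL(\cH)$.

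For (i) I would show that the graph of $T^*$ is closed. If $(y_n, T^*y_n) \to (y, z)$ in $\cH \times \cH$, then for each fixed $x \in \cD(T)$ passing to the limit in $\langle Tx, y_n\rangle = \langle x, T^*y_n\rangle$ — legitimate by the continuity in (a) — gives $\langle Tx, y\rangle = \langle x, z\rangle$. Since this holds for every $x \in \cD(T)$, Definition~\ref{def:Dec11rro3} yields $y \in \cD(T^*)$ and, by the uniqueness (b), $T^*y = z$; hence the graph of $T^*$ is closed, i.e. $T^*$ is closed in the sense of Definition~\ref{def:Dec14tt9}. (Equivalently, one can check that $\Gamma(T^*)$ is the orthogonal complement, inside $\cH \times \cH$, of the image of $\Gamma(T)$ under the right-linear unitary flip $(x,y) \mapsto (-y, x)$, and orthogonal complements are automatically closed; I expect the direct limiting argument to be the cleanest to write out.)

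For (ii), observe that $y \in \Ran(T)^\perp$ says exactly that $\langle Tx, y\rangle = 0 = \langle x, 0\rangle$ for all $x \in \cD(T)$, which by Definition~\ref{def:Dec11rro3} is equivalent to $y \in \cD(T^*)$ with $T^*y = 0$, i.e. $y \in \Ker(T^*)$. For (iii), note first that $T \subseteq W$ gives $\cD(W) \supseteq \cD(T)$, so $W$ is densely defined and $W^*$ makes sense; then for $y \in \cD(W^*)$ the identity $\langle Wx, y\rangle = \langle x, W^*y\rangle$, restricted to $x \in \cD(T) \subseteq \cD(W)$ where $Wx = Tx$, becomes $\langle Tx, y\rangle = \langle x, W^*y\rangle$ for all $x \in \cD(T)$; hence $y \in \cD(T^*)$ and $T^*y = W^*y$, that is, $W^* \subseteq T^*$.

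I do not anticipate a genuine obstacle here: every step reduces to the defining property of the adjoint together with density of $\cD(T)$ and separate continuity of $\langle\cdot,\cdot\rangle$. The only care required is bookkeeping with the conjugate-linear second argument of the $\HH$-valued inner product — keeping scalars on the correct side in the right-linearity check for $T^*$ and when taking conjugates in (i) and (ii) — but no feature specific to the quaternions beyond the stated axioms intervenes.
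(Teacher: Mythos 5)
Your proposal is correct and takes essentially the same route as the paper, whose proof of this theorem consists of the single remark that the arguments of the complex case (Proposition 1.6 in Schm\"udgen's book) carry over verbatim. The details you supply --- right-linearity of $T^*$ from the conjugate-linear second slot, graph-closedness by passing to the limit in $\langle Tx, y_n\rangle = \langle x, T^*y_n\rangle$, the equivalence $\langle Tx,y\rangle = \langle x,0\rangle \Leftrightarrow y\in\Ker(T^*)$, and restriction of the adjoint identity to $\cD(T)\subseteq\cD(W)$ --- are exactly the intended adaptation.
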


\begin{proof}
The proofs can completed in much the same way as the case when $\cH$ is a complex Hilbert space (see, e.g., Proposition 1.6 in \cite{Schmuedgen}).
\end{proof}

\begin{thm}
\label{thm:Dec14uu1}
If $T \in \cL(\cH)$ is densely defined, then{\rm :}
\begin{enumerate}
\item[(i)] $T$ is closable if and only if $\cD(T^*)$ is dense in $\cH$.
\item[(ii)] If $T$ is closable, then $\overline{T} = T^{**}$.
\item[(iii)] $T$ is closed if and only if $\overline{T} = T^{**}$.
\item[(iv)] If $T$ is closable and ${\rm Ker}(T) = \{ 0 \}$, then $T^{-1}$ is closable if and only if ${\rm Ker}(\overline{T}) = \{ 0 \}$. Moreover,
$$(\overline{T})^{-1} = \overline{T^{-1}}.$$
\end{enumerate}
\end{thm}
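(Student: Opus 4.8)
The plan is to prove Theorem \ref{thm:Dec14uu1} by transcribing the classical Hilbert space arguments, which go through verbatim once one has the graph-theoretic machinery available over $\HH$. The central device is the operator $V \in \cL(\cH \times \cH)$ defined by $V(x,y) = (-y,x)$ (or its right-linear analogue); this is unitary, $V^2 = -I$, and for any densely defined $T$ one has the orthogonal decomposition $\cH \times \cH = \overline{\Gamma(T)} \oplus V(\Gamma(T^*))$, where $\Gamma(T) = \{(x,Tx) : x \in \cD(T)\}$ is the graph. The only quaternionic subtlety is checking that $V$ is well defined as a \emph{right} linear operator and that the orthogonal complement computation respects the conventions for the $\HH$-valued inner product fixed in Section \ref{sec:prelim}; since scalars act on the right and $V$ only permutes and negates coordinates, right linearity and the identity $\langle V(x,y),(u,w)\rangle = -\langle (x,y),V(u,w)\rangle$ are immediate, so the decomposition is valid.

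First I would prove (i) and (ii) together. If $\cD(T^*)$ is dense, then $T^{**}$ exists; from the graph identity applied twice, $\overline{\Gamma(T)} = (V(\Gamma(T^*)))^\perp = \Gamma(T^{**})$ (using $V^2 = -I$ and that $\Gamma(T^*)$ is closed by Theorem \ref{thm:Dec14yy1}(i)), so $\overline{\Gamma(T)}$ is a graph, which is exactly the statement that $T$ is closable with $\overline{T} = T^{**}$. Conversely, if $\cD(T^*)$ is not dense, pick $0 \neq z \perp \cD(T^*)$; then $(z,0) \perp \Gamma(T^*)$ in a suitable sense, and chasing this through $V$ shows $(0,z) \in \overline{\Gamma(T)}$, so $\overline{\Gamma(T)}$ contains a point $(0,z)$ with $z \neq 0$ and hence is not a graph — i.e. $T$ is not closable. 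This simultaneously yields (i) and (ii). Then (iii) is a one-line consequence: $T$ closed means $\Gamma(T) = \overline{\Gamma(T)} = \Gamma(T^{**})$, i.e. $T = T^{**}$, and conversely.

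For (iv), assume $T$ is closable and $\Ker(T) = \{0\}$, so $T^{-1}$ is a well-defined (not necessarily closed) operator with $\Gamma(T^{-1}) = \{(Tx, x) : x \in \cD(T)\} = U(\Gamma(T))$, where $U(x,y) = (y,x)$ is the flip, again unitary and right linear. Since $U$ is a homeomorphism, $\overline{\Gamma(T^{-1})} = U(\overline{\Gamma(T)}) = U(\Gamma(\overline{T}))$. Now $T^{-1}$ is closable iff $\overline{\Gamma(T^{-1})}$ is a graph iff $U(\Gamma(\overline{T}))$ contains no element of the form $(0,z)$ with $z \neq 0$ iff $\Gamma(\overline{T})$ contains no element $(z,0)$ with $z \neq 0$ iff $\Ker(\overline{T}) = \{0\}$; and when this holds, $\overline{\Gamma(T^{-1})} = U(\Gamma(\overline{T})) = \Gamma((\overline{T})^{-1})$ gives $\overline{T^{-1}} = (\overline{T})^{-1}$. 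One should also note in passing that $\overline{T}$ being a closable (indeed closed) operator with trivial kernel makes $(\overline{T})^{-1}$ meaningful.

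I expect the main obstacle to be purely bookkeeping rather than conceptual: one must verify that the auxiliary operators $V$ and $U$ on $\cH \times \cH$ are genuinely right $\HH$-linear and that $\cH \times \cH$, with the inner product $\langle (x,y),(u,w)\rangle = \langle x,u\rangle + \langle y,w\rangle$, is a right quaternionic Hilbert space for which $\overline{M}^{\perp\perp} = \overline{M}$ for subspaces $M$ — i.e. that the orthogonal complement calculus used classically is available here. These facts are routine but need to be stated, since the whole argument is really just ``the graph/orthogonal-complement proof carries over.'' Accordingly the write-up can be compressed to: establish the $V$- and $U$-identities, then run the four graph computations above. As with Theorems \ref{thm:Dec14yy1} and the preceding results, the details mirror the complex case (cf. \cite{Schmuedgen}).
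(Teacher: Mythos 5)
Your proposal is correct and takes essentially the same route as the paper: the paper's proof of this theorem consists entirely of the remark that the classical argument (Theorem 1.8 in \cite{Schmuedgen}) carries over to the quaternionic case, and what you have written out is precisely that graph/orthogonal-complement argument, together with the routine checks that $V(x,y)=(-y,x)$ and the flip $U$ are right $\HH$-linear and that the decomposition $\cH\times\cH=\overline{\Gamma(T)}\oplus V(\Gamma(T^{*}))$ respects the paper's inner-product conventions. No gap to report.
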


\begin{proof}
The proofs can completed in much the same way as the case when $\cH$ is a complex Hilbert space (see, e.g., Theorem 1.8 in \cite{Schmuedgen}).
\end{proof}

\begin{defn}
\label{def:Aug24y1}
Let $T \in \cL(\cH)$. We call $T$ {\it normal} if $T$ is densely defined, $T$ is closed and $TT^* = T^* T$.
\end{defn}

\begin{lem}
\label{lem:Aug25yq1}
Let $T \in \cL(\cH)$ be normal. If $S \in \cL(\cH)$ so that $T \subseteq S$ and $\cD(S) \subseteq D(S^*)$, then $S = T$.
\end{lem}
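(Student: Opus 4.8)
The plan is to reduce the statement to a single structural fact about normal operators, namely that $\cD(T)=\cD(T^*)$, and then carry out a short comparison of domains; the lemma is the quaternionic counterpart of the classical maximality of normal operators among operators whose domain is contained in that of their adjoint.

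First I would record the auxiliary fact: \emph{if $T\in\cL(\cH)$ is normal, then $\cD(T)=\cD(T^*)$} (and moreover $\|Tx\|=\|T^*x\|$ for $x\in\cD(T)$); this is the only place where normality of $T$ is used. To prove it from Definition \ref{def:Aug24y1}, set $\cD_0:=\cD(T^*T)=\cD(TT^*)$, the equality being part of what $TT^*=T^*T$ means. For $x\in\cD_0$ one has $x\in\cD(T)\cap\cD(T^*)$, $Tx\in\cD(T^*)$ and $T^*x\in\cD(T)$, so applying the defining relation of $T^*$ (Definition \ref{def:Dec11rro3}) twice gives
$$\|Tx\|^2=\langle T^*Tx,x\rangle=\langle TT^*x,x\rangle=\|T^*x\|^2 .$$
Arguing exactly as in the complex case (cf. \cite{Schmuedgen}), $T$ being closed and densely defined forces $T^*T$ to be self-adjoint with $\cD_0=\cD(T^*T)$ a core for $T$, and since $\cD_0=\cD(TT^*)$ it is likewise a core for $T^*$. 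Given $x\in\cD(T)$, pick $x_n\in\cD_0$ with $x_n\to x$ and $Tx_n\to Tx$; by the displayed identity $\{T^*x_n\}$ is Cauchy, hence convergent, and closedness of $T^*$ (Theorem \ref{thm:Dec14yy1}(i)) yields $x\in\cD(T^*)$ with $\|T^*x\|=\|Tx\|$. Thus $\cD(T)\subseteq\cD(T^*)$; since $T^*$ is again normal (its adjoint is $T^{**}=T$ by Theorem \ref{thm:Dec14uu1}(iii), and then $(T^*)(T^*)^*=T^*T=TT^*=(T^*)^*(T^*)$), the same argument applied to $T^*$ gives the reverse inclusion, so $\cD(T)=\cD(T^*)$.

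With this in hand the lemma is immediate. From $T\subseteq S$ and Theorem \ref{thm:Dec14yy1}(iii) we get $S^*\subseteq T^*$, in particular $\cD(S^*)\subseteq\cD(T^*)$. Combining this with the hypothesis $\cD(S)\subseteq\cD(S^*)$ and the auxiliary fact $\cD(T^*)=\cD(T)$ yields $\cD(S)\subseteq\cD(T)$; on the other hand $T\subseteq S$ gives $\cD(T)\subseteq\cD(S)$. Hence $\cD(S)=\cD(T)$, and since $Sx=Tx$ for every $x\in\cD(T)$, Definition \ref{def:Dec14tt9} gives $S=T$.

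I expect the only genuine obstacle to be the auxiliary fact $\cD(T)=\cD(T^*)$: everything after it is pure domain-chasing. Proving it rigorously requires the quaternionic version of von Neumann's theorem (self-adjointness of $T^*T$ and the core property of its domain); if that has already been established in the quaternionic framework — or is simply quoted as the literal transcription of the complex statement, as is done for Theorems \ref{thm:Dec14yy1} and \ref{thm:Dec14uu1} — then no further input is needed.
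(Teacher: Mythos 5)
Your proof is correct and follows essentially the same route as the paper: the paper's argument is exactly the domain chain $\cD(T)\subseteq\cD(S)\subseteq\cD(S^*)\subseteq\cD(T^*)=\cD(T)$ obtained from $T\subseteq S\Rightarrow S^*\subseteq T^*$. The only difference is that you also supply a justification of the auxiliary fact $\cD(T)=\cD(T^*)$ for normal $T$, which the paper uses without comment.
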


\begin{proof}
If $T \subseteq S$, then $S^* \subseteq T^*$ and hence
$$\cD(T) \subseteq \cD(S) \subseteq \cD(S^*) \subseteq \cD(T^*) = \cD(T),$$
i.e., $\cD(S) = \cD(T)$. Therefore, $S = T$.
\end{proof}

\begin{defn}
\label{def:Sept15t1}
Let $T \in \cL(\cH)$. We call $T$ {\it self-adjoint}, {\it anti self-adjoint} and {\it unitary} if $T = T^*$, $T = - T^*$ and $T T^* = T^* T = I_{\cH}$, respectively.
\end{defn}

\begin{defn}
\label{def:Dec14j1}
Let $T \in \cL(\cH)$ be densely defined and let $\mathcal{R}_s(T): \cD(T^2) \to \cH$ be given by
$$
\mathcal{R}_s(T)x  =\{ T^2 - 2 {\rm Re}(s) T + |s|^2 I_{\mathcal{H}} \}x, \quad x \in \mathcal{D}(T^2).
$$
The $S$-resolvent set of $T$ is defined as follows
\begin{align*}
\rho_S(T)
=& \; \{\text{$s\in \mathbb{H}$ {\rm :} $\Ker(\mathcal{R}_s(T))=\{0\}$, $\Ran(\mathcal{R}_s(T))$ is dense in $\cH$ and} \\
& \; \text{$\mathcal{R}_s(T)^{-1} \in \cB(\cH)$}\}.
\end{align*}
\end{defn}

We recall some properties, see \cite{CSS}, of the $S$-spectrum. The $S$-spectrum satisfies
$$
\sigma_S(T)=\mathbb{H}\setminus \rho_S(T).
$$

\begin{thm}
\label{thm:Sept15uu1}
Let $T \in \mathcal{B}(\cH)$. Then the $S$-spectrum is a compact non-empty subset of $\HH$ and
\begin{equation}
\label{eq:Dec5tt1}
\sigma_S(T) \subseteq \{ p \in \HH: 0 \leq |p| \leq \| T \| \}.
\end{equation}
\end{thm}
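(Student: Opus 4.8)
The plan is to establish, in order, that $\sigma_S(T)$ is closed, that it is contained in the closed ball of radius $\|T\|$, and that it is non-empty, and then to deduce compactness from the first two. A preliminary reduction I would record: since $T$ is bounded, $\mathcal{R}_s(T)$ is everywhere defined and bounded, and standard Hilbert-space reasoning (an injective bounded operator with bounded inverse on its range has closed range) shows that the three requirements in Definition \ref{def:Dec14j1} hold simultaneously if and only if $\mathcal{R}_s(T)$ is a bijection of $\cH$; hence $\rho_S(T) = \{ s \in \HH : \mathcal{R}_s(T)\ \text{is invertible in}\ \cB(\cH) \}$.

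\textit{Closedness.} I would observe that $s \mapsto \mathcal{R}_s(T)$ is a continuous map from $\HH$ into $\cB(\cH)$, being a polynomial in the real quantities $\re(s)$ and $|s|^2$, and that the invertible elements of $\cB(\cH)$ form an open set (the usual Neumann series argument is valid over $\HH$). Therefore $\rho_S(T)$ is open and $\sigma_S(T)$ is closed.

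\textit{The norm bound.} This is the crux, and I would handle it by restriction of scalars. Fix $j \in \mathbb{S}$ and regard $\cH$ as a complex Hilbert space $\cH_j$ over $\CC_j = \RR + j\RR$, keeping right multiplication by $\CC_j$-scalars and taking the $\CC_j$-component of $\langle\cdot,\cdot\rangle$ as inner product; then $T$ is $\CC_j$-linear with $\|T\|_{\cH_j} = \|T\|$. Let $J \in \cB(\cH_j)$ be right multiplication by $j$: it is an isometry, $J^2 = -I_\cH$, it is exactly multiplication by the imaginary unit of the $\CC_j$-module $\cH_j$, and $TJ = JT$ because $T$ is right $\HH$-linear. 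Writing $a = \re(s)$ and $b = \sqrt{|s|^2 - a^2} \ge 0$ and using $TJ = JT$, one obtains the factorization
\begin{equation*}
\mathcal{R}_s(T) = (T - aI_\cH - bJ)(T - aI_\cH + bJ) = (T - aI_\cH + bJ)(T - aI_\cH - bJ),
\end{equation*}
whose two factors commute with each other. I would then argue that $\mathcal{R}_s(T)$ is invertible in $\cB(\cH)$ if and only if both factors are invertible in $\cB(\cH_j)$: if the product (which is also $\CC_j$-linear) is invertible, a short computation with the commuting factors exhibits a two-sided inverse of each factor, bounded by the bounded inverse theorem; conversely, if both factors are invertible in $\cB(\cH_j)$, then $\mathcal{R}_s(T)$ is a bounded $\CC_j$-linear bijection which happens to be $\HH$-linear, so its inverse is automatically $\HH$-linear and bounded. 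Since the two factors are $T - (a \pm jb)I_\cH$ with $a \pm jb \in \CC_j$, it follows that $s \in \sigma_S(T)$ precisely when $a + jb$ or $a - jb$ lies in the classical spectrum $\sigma_{\CC_j}(T)$; as $\sigma_{\CC_j}(T) \subseteq \{ |\lambda| \le \|T\| \}$ and $|a \pm jb|^2 = a^2 + b^2 = |s|^2$, this forces $|s| \le \|T\|$, which is \eqref{eq:Dec5tt1}.

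\textit{Non-emptiness and conclusion.} The classical spectrum $\sigma_{\CC_j}(T)$ of the bounded operator $T$ on the non-zero complex Hilbert space $\cH_j$ is non-empty; choosing $\lambda = a + jb \in \sigma_{\CC_j}(T)$ with $a,b \in \RR$ and setting $s = a + j|b| \in \HH$, one has $\re(s) = a$, $\sqrt{|s|^2 - \re(s)^2} = |b|$, and $\lambda \in \{a + j|b|,\, a - j|b|\}$, so the equivalence above gives $s \in \sigma_S(T)$. Finally $\sigma_S(T)$ is closed and bounded in $\HH \cong \RR^4$, hence compact. The one genuine obstacle is the \emph{sharpness} of the radius $\|T\|$: a direct Neumann expansion $\mathcal{R}_s(T)^{-1} = |s|^{-2}\sum_{n\ge 0}\big(|s|^{-2}(2\re(s)T - T^2)\big)^n$ only converges for $|s| > (1+\sqrt{2})\|T\|$, since $\|2\re(s)T - T^2\| \le \|T\|^2 + 2|s|\,\|T\|$; it is exactly the passage to $\cH_j$ and the splitting of $\mathcal{R}_s(T)$ into two honest first-order complex factors that recovers the optimal bound and, as a bonus, delivers non-emptiness from the classical theory. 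The book-keeping to watch is that invertibility must always be read in $\cB(\cH)$ while the two factors naturally live in $\cB(\cH_j)$, and that the Hilbert structure of $\cH_j$ is invoked only to know $\|T\|_{\cH_j} = \|T\|$ and that $\sigma_{\CC_j}(T) \ne \emptyset$.
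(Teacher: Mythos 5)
Your argument is correct, but it is not the route the paper takes: the paper gives no proof at all here, deferring to Theorem 3.2.6 of \cite{CSS}, where the result is proved for two-sided quaternionic Banach spaces by intrinsically quaternionic means --- the containment \eqref{eq:Dec5tt1} comes from the power-series expansion of the $S$-resolvent $\sum_{n\geq 0} T^n s^{-1-n}$ (which converges for $|s|>\|T\|$ and inverts $\mathcal{R}_s(T)$ after composition with $T-\bar s I$), and non-emptiness comes from a Liouville-type argument for slice-regular $\cB(\cV)$-valued functions. Your restriction-of-scalars proof is a genuinely different and, in the Hilbert-space setting of this paper, arguably more economical path: the factorization $\mathcal{R}_s(T)=(T-\lambda I_{\cH_j})(T-\bar\lambda I_{\cH_j})$ with $\lambda=\re(s)+j\sqrt{|s|^2-\re(s)^2}$ is valid (the two factors commute because $T$ commutes with right multiplication by $j$, even though each factor is only $\CC_j$-linear while their product is $\HH$-linear, a point you rightly flag), and it reduces everything to the classical spectrum of $T$ on $\cH_j$, giving the sharp radius $\|T\|$ and non-emptiness for free; it also yields the finer statement $\sigma_S(T)\cap\CC_j=\sigma_{\CC_j}(T)\cup\overline{\sigma_{\CC_j}(T)}$, consistent with the axial symmetry of Theorem \ref{thm:Sept15uu1b}. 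What the \cite{CSS} approach buys in exchange is generality (no Hilbert or even two-sided module splitting over $\CC_j$ is needed in the same way, and the argument lives entirely in the quaternionic functional-calculus framework the rest of the theory is built on), whereas yours leans on the classical complex Banach-algebra facts; your closing remark that a naive Neumann expansion of $\mathcal{R}_s(T)^{-1}$ only reaches $|s|>(1+\sqrt2)\|T\|$ is accurate and explains why some such device --- either your factorization or the $S$-resolvent series --- is genuinely needed for the sharp bound.
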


\begin{proof}
See Theorem 3.2.6 in \cite{CSS}.
\end{proof}

\begin{thm}
\label{thm:Sept15uu1b}
Let $T \in \cL(\cH)$ be densely defined.  If $p=p_0+ip_1\in  \sigma_S(T)$ for $i\in \mathbb{S}$,
then $p_0+jp_1\in  \sigma_S(T)$ for all $j\in \mathbb{S}$.
\end{thm}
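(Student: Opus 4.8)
The key observation is that the operator $\mathcal{R}_s(T) = T^2 - 2\,{\rm Re}(s)\,T + |s|^2 I_{\cH}$ depends on $s$ only through the two real quantities $s_0 = {\rm Re}(s)$ and $|s|^2 = s_0^2 + s_1^2$. Writing $p = p_0 + i p_1$ with $i \in \mathbb{S}$ and $p_0, p_1 \in \mathbb{R}$, and $q = p_0 + j p_1$ for an arbitrary $j \in \mathbb{S}$, one has ${\rm Re}(p) = p_0 = {\rm Re}(q)$ and $|p|^2 = p_0^2 + p_1^2 = |q|^2$. Hence
\begin{equation}
\mathcal{R}_p(T) = T^2 - 2 p_0 T + (p_0^2 + p_1^2) I_{\cH} = \mathcal{R}_q(T)
\end{equation}
as operators on $\cD(T^2)$. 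The plan is simply to exploit this identity.

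First I would record the displayed identity $\mathcal{R}_p(T) = \mathcal{R}_q(T)$, which is immediate from Definition \ref{def:Dec14j1} once one checks that ${\rm Re}(p) = {\rm Re}(q)$ and $|p| = |q|$; this uses that $i, j \in \mathbb{S}$ satisfy $i^2 = j^2 = -1$, so $|p_0 + i p_1|^2 = p_0^2 + p_1^2 |i|^2 = p_0^2 + p_1^2$ and likewise for $q$. Next, since $p \in \sigma_S(T) = \HH \setminus \rho_S(T)$, by Definition \ref{def:Dec14j1} at least one of the three conditions defining $\rho_S(T)$ fails for $\mathcal{R}_p(T)$: either $\Ker(\mathcal{R}_p(T)) \neq \{0\}$, or $\Ran(\mathcal{R}_p(T))$ is not dense in $\cH$, or $\mathcal{R}_p(T)^{-1} \notin \cB(\cH)$. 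Because $\mathcal{R}_q(T)$ is literally the same operator with the same domain, the identical condition fails for $\mathcal{R}_q(T)$, so $q \notin \rho_S(T)$, i.e., $q \in \sigma_S(T)$. Since $j \in \mathbb{S}$ was arbitrary, this proves the claim.

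I do not anticipate any real obstacle here: the statement is essentially a bookkeeping consequence of the fact that the $S$-spectrum is defined via a quadratic expression in $T$ with real coefficients determined by $({\rm Re}(s), |s|)$, and two quaternions with the same real part and same modulus yield the same such expression. The only point requiring a sentence of care is confirming that the inversion condition "$\mathcal{R}_s(T)^{-1} \in \cB(\cH)$" — together with the kernel and range conditions — is phrased entirely in terms of the operator $\mathcal{R}_s(T)$ itself and not in terms of $s$ separately, which is clear from Definition \ref{def:Dec14j1}. Thus the proof is a two-line argument and the statement holds verbatim for densely defined (not necessarily bounded) $T$, exactly as stated.
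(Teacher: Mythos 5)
Your proposal is correct and is exactly the argument the paper intends: the paper's own proof simply remarks that the claim ``follows immediately from the definition'' because all quaternions with the same real part and modulus yield the same operator $\mathcal{R}_s(T)$, which is precisely the identity $\mathcal{R}_p(T)=\mathcal{R}_q(T)$ you spell out. Your version just makes the bookkeeping explicit; there is no gap and no difference in method.
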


\begin{proof}
The proof of the assertion follows directly from the definition of the $S$-spectrum.
If $s \in \sigma_S(T)$, then it follows immediately from the definition of $\sigma_S(T)$ that all the quaternions with the same real part and the same modulus belong to the $S$-spectrum of $T$.
\end{proof}

\begin{thm}
\label{thm:Sept15yz1}
Let $T \in \cL(\cH)$. The following statements hold{\rm :}
\begin{enumerate}
\item[(i)] If $T$ is positive, then $\sigma_S(T) \subseteq [0, \infty)$. If, in particular, $T \in \cB(\cH)$ is positive, then
$$\sigma_S(T) \subseteq [0, \| T \|].$$
\item[(ii)] If $T$ is self-adjoint, then $\sigma_S(T) \subseteq \RR$. If, in particular, $T \in \cB(\cH)$ is self-adjoint, then
$$\sigma_S(T) \subseteq [ -\| T \|, \| T \| ].$$
\item[(iii)] If $T$ is anti self-adjoint, then $\sigma_S(T) \subseteq \{\text{$p \in \HH${\rm :} $\re \hspace{0.5mm} p = 0$}\}$. If, in particular, $T \in \cB(\cH)$ is anti self-adjoint, then
$$\sigma_S(T)   \subseteq \{\text{$p \in \HH${\rm :} $\re \hspace{0.5mm} p = 0$ {\it and} $|p| \leq \| T \|$}\}.$$
\item[(iv)] If $T$ is unitary, then $\sigma_S(T) \subseteq \mathbb{S}$.
\end{enumerate}
\end{thm}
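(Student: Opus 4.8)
The plan is to show, in each of the four cases, that every $s\in\HH$ lying outside the asserted set belongs to the $S$-resolvent set $\rho_S(T)$; the mechanism is to produce a constant $\delta=\delta(s)>0$ with
\[
\|\mathcal{R}_s(T)x\|\ \geq\ \delta\,\|x\|\qquad\text{for all }x\in\cD(T^2).
\]
Given such an estimate, $\Ker(\mathcal{R}_s(T))=\{0\}$ is immediate. For the density of $\Ran(\mathcal{R}_s(T))$ I use that in each case $T^*$ has the \emph{same} structural property as $T$ (self-adjoint, positive, anti self-adjoint, or unitary), so the estimate applies verbatim to $\mathcal{R}_s(T^*)=\mathcal{R}_s(T)^*$; hence $\Ker(\mathcal{R}_s(T)^*)=\{0\}$, and since $\cD(T^2)$ is dense, Theorem~\ref{thm:Dec14yy1}(ii) gives $\Ran(\mathcal{R}_s(T))^{\perp}=\Ker(\mathcal{R}_s(T)^*)=\{0\}$. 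Finally, $\mathcal{R}_s(T)=p(T)$ for the real polynomial $p(z)=z^{2}-2\re(s)z+|s|^{2}$, so it is closed (trivially if $T$ is bounded), and then the lower estimate forces $\Ran(\mathcal{R}_s(T))$ to be closed, hence equal to $\cH$, with $\|\mathcal{R}_s(T)^{-1}\|\leq\delta^{-1}$. Thus $s\in\rho_S(T)$. The workhorse throughout is the elementary identity
\[
\mathcal{R}_s(T)\ =\ (T-s_0I_\cH)^{2}+s_1^{2}\,I_\cH,\qquad s_0:=\re(s),\ \ s_1:=|\im(s)|,
\]
valid for every operator $T$, together with the Cauchy--Schwarz inequality.

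\textbf{Cases (i)--(ii), $T$ positive or self-adjoint.} If $T=T^{*}$ and $s\notin\RR$, then $s_1\neq0$ and $A:=T-s_0I_\cH$ is self-adjoint, so $\langle\mathcal{R}_s(T)x,x\rangle=\|Ax\|^{2}+s_1^{2}\|x\|^{2}\geq s_1^{2}\|x\|^{2}$ and Cauchy--Schwarz gives $\|\mathcal{R}_s(T)x\|\geq s_1^{2}\|x\|$; hence $\sigma_S(T)\subseteq\RR$. If in addition $T\geq0$, it remains to exclude real $s_0<0$: there $s_1=0$, $\mathcal{R}_{s_0}(T)=(T-s_0I_\cH)^{2}$, and $\langle(T-s_0I_\cH)x,x\rangle=\langle Tx,x\rangle-s_0\|x\|^{2}\geq|s_0|\,\|x\|^{2}$ gives $\|(T-s_0I_\cH)x\|\geq|s_0|\,\|x\|$, so $\|\mathcal{R}_{s_0}(T)x\|\geq s_0^{2}\|x\|$; hence $\sigma_S(T)\subseteq[0,\infty)$. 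When $T$ is bounded, Theorem~\ref{thm:Sept15uu1} adds $\sigma_S(T)\subseteq\{|p|\leq\|T\|\}$, and intersecting yields $[-\|T\|,\|T\|]$ and $[0,\|T\|]$, respectively.

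\textbf{Cases (iii)--(iv), $T$ anti self-adjoint or unitary.} If $T^{*}=-T$, the form $\langle\mathcal{R}_s(T)x,x\rangle$ is of no use, since $\re\langle Tx,x\rangle=0$; instead, a direct expansion of $\|\mathcal{R}_s(T)x\|^{2}$ using the relations $\re\langle Tx,x\rangle=0$ and $\langle T^{2}x,x\rangle=-\|Tx\|^{2}$ yields, for $x\in\cD(T^2)$,
\[
\|\mathcal{R}_s(T)x\|^{2}\ =\ \bigl\|\,(T^{2}+(s_1^{2}-s_0^{2})I_\cH)x\,\bigr\|^{2}+4s_0^{2}s_1^{2}\|x\|^{2}.
\]
Since $T^{2}=-T^{*}T\leq0$ is self-adjoint, the first term is $\geq(s_0^{2}-s_1^{2})^{2}\|x\|^{2}$ when $s_0^{2}\geq s_1^{2}$, giving $\|\mathcal{R}_s(T)x\|^{2}\geq|s|^{4}\|x\|^{2}$ in that subcase, while the displayed identity already gives $\|\mathcal{R}_s(T)x\|^{2}\geq4s_0^{2}s_1^{2}\|x\|^{2}$ in general; in either event the bound is $\delta\|x\|^{2}$ with $\delta>0$ precisely when $\re(s)\neq0$, so $\sigma_S(T)\subseteq\{p\in\HH:\re p=0\}$, and the bounded refinement follows as before. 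For (iv), a unitary $T$ is bounded with $\|T\|=\|T^{-1}\|=1$, so $\sigma_S(T)\subseteq\{|p|\leq1\}$ by Theorem~\ref{thm:Sept15uu1}; moreover $T^{-1}$ is unitary and $\mathcal{R}_s(T^{-1})=|s|^{2}\,T^{-2}\,\mathcal{R}_{s^{-1}}(T)$ with $T^{-2}$ invertible, whence $\sigma_S(T^{-1})=\{p^{-1}:p\in\sigma_S(T)\}$; applying the first bound to $T^{-1}$ forces $|p|\geq1$ on $\sigma_S(T)$, so $\sigma_S(T)$ lies on the unit sphere of $\HH$.

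\textbf{Expected difficulty.} The crux is the anti self-adjoint case: because $\langle Tx,x\rangle$ is purely imaginary, no positive lower bound can be read off from $\langle\mathcal{R}_s(T)x,x\rangle$, and one is forced to work with $\|\mathcal{R}_s(T)x\|^{2}$ directly and to distinguish the two regimes $s_0^{2}\geq s_1^{2}$ (which contains the real points $s$) and $s_0^{2}<s_1^{2}$. Everything else is routine, the only remaining mild technicality being the unbounded bookkeeping — density of $\cD(T^{2})$, closedness of $p(T)$, and the passage from the lower bound to surjectivity — which is handled uniformly by the observation that $T^{*}$ inherits the hypothesis on $T$.
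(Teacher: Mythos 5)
Your computations are all correct and you have, in effect, supplied the proof that the paper omits: the paper's own argument for this theorem is the single sentence that (i)--(iii) ``follow readily from the definition of $\sigma_S(T)$'' and that the bounded refinements follow from \eqref{eq:Dec5tt1}. I verified your key identity in the anti self-adjoint case, $\|\mathcal{R}_s(T)x\|^2=\|(T^2+(s_1^2-s_0^2)I_{\cH})x\|^2+4s_0^2s_1^2\|x\|^2$ (the cross terms vanish because $\re\langle Tx,x\rangle=0$ and $\re\langle T^*Tx,Tx\rangle=0$, using that $T$ commutes with $T^*T=-T^2$), and the split into the regimes $s_0^2\ge s_1^2$ and $s_0^2<s_1^2$ does produce a uniform lower bound exactly when $\re (s)\ne 0$. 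Your treatment of (iv) is genuinely stronger than the paper's: \eqref{eq:Dec5tt1} only yields $|p|\le\|T\|=1$, and it is your inversion identity $\mathcal{R}_s(T^{-1})=|s|^2T^{-2}\mathcal{R}_{s^{-1}}(T)$ that forces $|p|\ge1$ as well. (As literally stated, with $\mathbb{S}$ the sphere of purely imaginary unit quaternions, item (iv) is false --- take $T=I_{\cH}$, for which $\sigma_S(T)=\{1\}$ --- so your reading of $\mathbb{S}$ as the unit sphere of $\HH$ is clearly the intended one.)

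The one step that is not yet watertight is the unbounded bookkeeping you defer to the end. You use the equality $\mathcal{R}_s(T)^*=\mathcal{R}_s(T^*)$ to get $\Ran(\mathcal{R}_s(T))^{\perp}=\Ker(\mathcal{R}_s(T^*))$, but for unbounded $T$ only the inclusion $\mathcal{R}_s(T^*)\subseteq\mathcal{R}_s(T)^*$ is automatic, and it goes the wrong way: a vector $y\in\Ker(\mathcal{R}_s(T)^*)$ need not a priori lie in $\cD((T^*)^2)$, so your lower bound cannot be applied to it. Likewise, the closedness of $\mathcal{R}_s(T)=p(T)$ is not free --- the square of a closed operator need not be closed. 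Both points are repaired by von Neumann's theorem that $T^*T$ is self-adjoint for closed densely defined $T$ (the same fact the paper invokes silently in Section \ref{sec:STU} when it declares $C_T=(I_{\cH}+T^*T)^{-1}$ bounded and positive): in case (ii) this gives at once that $A^2=A^*A$ is self-adjoint for $A=T-s_0I_{\cH}$, so $\mathcal{R}_s(T)=s_1^2\bigl(I_{\cH}+(A/s_1)^*(A/s_1)\bigr)$ is self-adjoint, closed, and in fact surjective outright, with no density argument needed; in cases (i) and (iii) one still has to argue that $\cD(\mathcal{R}_s(T)^*)\subseteq\cD(T^2)$, e.g.\ by viewing $-2s_0T$ as a relatively bounded perturbation of the self-adjoint operator $\pm T^*T$. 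You should make this explicit rather than assert $\mathcal{R}_s(T)^*=\mathcal{R}_s(T^*)$ ``verbatim''; everything else in the proposal stands.
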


\begin{proof}
If $T \in \cL(\cH)$, then the containments illustrated in (i)-(iii) follow readily from the definition of $\sigma_S(T)$. If $T \in \cB(\cH)$, then the containments illustrated in (i)-(iv) follow readily from \eqref{eq:Dec5tt1}.
\end{proof}

We will also need the following version of the Riesz representation theorem.

\begin{thm}
\label{thm:July29u1}
Let $X$ be a compact Hausdorff space and $\mathscr{C}(X, \RR)$ denote the normed space of real-valued continuous functions on $X$ together with the supremum norm $\| \cdot \|_{\infty}$. Corresponding to any bounded positive linear functional $\psi: \mathscr{C}(X, \RR) \to \RR$ there exists a unique positive Borel measure $\mu$ on $X$ such that
\begin{equation}
\label{eq:July30y1}
\psi(f) = \int_X f(p) d\mu(p) \quad {\rm for} \quad {\rm all} \quad f \in \mathscr{C}(X, \RR).
\end{equation}
\end{thm}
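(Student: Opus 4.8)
The plan is to run the classical Riesz construction, which is especially clean here because $X$ is compact and hence the measure produced will be finite. Since a compact Hausdorff space is normal, Urysohn's lemma and subordinate partitions of unity are at our disposal, and we use them throughout. For an open set $V \subseteq X$ define
$$\mu(V) = \sup\{\psi(g) : g \in \mathscr{C}(X, \RR),\ 0 \leq g \leq 1,\ \operatorname{supp}(g) \subseteq V\},$$
and for an arbitrary $E \subseteq X$ set $\mu^*(E) = \inf\{\mu(V) : E \subseteq V,\ V\ \text{open}\}$. First I would verify that $\mu^*$ is an outer measure: monotonicity is clear, and for countable subadditivity one covers a given $E \subseteq \bigcup_n E_n$ by open sets $V_n \supseteq E_n$, observes that any $g$ admissible for $\bigcup_n V_n$ has compact support covered by finitely many $V_{n_1}, \dots, V_{n_m}$, and uses a partition of unity $g_1, \dots, g_m$ subordinate to that finite subcover to write $\psi(g) = \sum_k \psi(g_k g) \leq \sum_k \mu(V_{n_k})$, whence $\mu^*(\bigcup_n E_n) \leq \sum_n \mu^*(E_n)$.

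The technical heart of the argument is to show that every open set — hence, by the Carath\'eodory criterion, every Borel set — is $\mu^*$-measurable, and simultaneously that $\mu^*$ is countably additive on Borel sets. Both hinge on the fact that if $K_1, K_2 \subseteq X$ are disjoint compact sets then $\mu^*(K_1 \cup K_2) = \mu^*(K_1) + \mu^*(K_2)$, which is proved by separating $K_1$ and $K_2$ by disjoint open sets (normality), choosing Urysohn functions adapted to them, and unwinding the definitions; together with this one needs the elementary fact that $\mu(V)$ is approximated from within by $\psi(g)$ for admissible $g$. Restricting $\mu^*$ to the Borel $\sigma$-algebra then yields a positive Borel measure $\mu$ on $X$; it is finite, with total mass $\mu(X) = \psi(1)$ (note that a positive linear functional on $\mathscr{C}(X, \RR)$ with $X$ compact is automatically bounded, with $\|\psi\| = \psi(1)$, since $-\|f\|_{\infty} \leq f \leq \|f\|_{\infty}$), and by construction it is outer regular and inner regular on open sets.

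Next I would establish the representation formula \eqref{eq:July30y1}. By linearity it suffices to prove $\psi(f) \leq \int_X f\, d\mu$ for every $f \in \mathscr{C}(X, \RR)$, since applying this to $-f$ yields the reverse inequality; moreover, since $\psi(1) = \mu(X) = \int_X 1\, d\mu$, an affine normalization lets us assume $0 \leq f \leq 1$. Fix $\eps > 0$, pick $0 = t_0 < t_1 < \cdots < t_n = 1 + \eps$ with mesh less than $\eps$, put $E_k = \{p \in X : t_{k-1} \leq f(p) < t_k\}$, a Borel partition of $X$, and, using outer regularity, choose open sets $V_k \supseteq E_k$ with $\mu(V_k) < \mu(E_k) + \eps/n$ and (shrinking if necessary) with $f < t_k + \eps$ on $V_k$. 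Take a partition of unity $g_1, \dots, g_n$ subordinate to $\{V_k\}$ with $\sum_k g_k \equiv 1$ on $X$; then $f = \sum_k g_k f$ with $0 \leq g_k f \leq (t_k + \eps) g_k$, so positivity of $\psi$ gives $\psi(f) = \sum_k \psi(g_k f) \leq \sum_k (t_k + \eps)\psi(g_k) \leq \sum_k (t_k + \eps)\mu(V_k)$, and comparing this with $\int_X f\, d\mu \geq \sum_k t_{k-1}\mu(E_k)$ produces $\psi(f) \leq \int_X f\, d\mu + c\,\eps$ with $c$ depending only on $\mu(X)$; letting $\eps \to 0$ finishes the representation.

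For uniqueness, if $\mu_1$ and $\mu_2$ are positive Borel measures both satisfying \eqref{eq:July30y1}, then for every open $V$ and every admissible $g$ (with $0 \leq g \leq 1$, $\operatorname{supp}(g) \subseteq V$) one has $\int_X g\, d\mu_i \leq \mu_i(V)$, while Urysohn's lemma provides such $g$ with $\int_X g\, d\mu_i$ as close to $\mu_i(V)$ as we like once the $\mu_i$ are known to be outer regular; since $\int_X g\, d\mu_1 = \psi(g) = \int_X g\, d\mu_2$ we conclude $\mu_1 = \mu_2$ on open sets and hence on all Borel sets. I expect the genuinely delicate point to be the Carath\'eodory-measurability and additivity step of the second paragraph; everything else is bookkeeping. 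Finally I note that in every application in this paper $X$ will be a compact subset of $\HH \cong \RR^4$, hence metrizable, so that every finite Borel measure on $X$ is automatically regular and the uniqueness clause needs no extra hypothesis; for such $X$ one may alternatively derive the whole statement from the Daniell--Stone theorem, since $f_n \downarrow 0$ pointwise on the compact set $X$ forces $f_n \to 0$ uniformly by Dini's theorem and hence $\psi(f_n) \to 0$, exhibiting $\psi$ as a Daniell integral.
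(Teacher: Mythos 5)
Your proposal is correct, but it is doing far more work than the paper, which disposes of this statement in one line by citing Theorem D of Section 56 of Halmos's \emph{Measure Theory}; what you have written out is essentially the Rudin-style proof of the Riesz--Markov--Kakutani theorem (outer measure induced by the functional, Carath\'eodory measurability via additivity on disjoint compacta, and the level-set/partition-of-unity argument for the representation formula), whereas Halmos reaches the same result through contents and Baire measures. Both routes are standard and your sketch of each step is sound, including the reduction of $\psi(f)\leq\int f\,d\mu$ to the case $0\leq f\leq 1$ and the bookkeeping with the sets $E_k=\{t_{k-1}\leq f<t_k\}$. One point where your proposal actually adds value over the paper's statement: as written, ``unique positive Borel measure'' is not quite right on an arbitrary compact Hausdorff space --- uniqueness holds only within the class of \emph{regular} Borel measures (the Dieudonn\'e measure on the ordinal space $[0,\omega_1]$ is the classical counterexample), and this is how Halmos states it. You correctly observe that in every application here $X$ is a compact subset of $\HH\cong\RR^4$, hence metrizable, so every finite Borel measure is automatically regular and the uniqueness clause is unconditional; your closing remark that one could instead invoke Daniell--Stone via Dini's theorem is also valid, though it trades the construction of $\mu$ for an equivalent extension theorem. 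The only cosmetic slip is in the uniqueness paragraph, where approximating $\mu_i(V)$ from below by $\int g\,d\mu_i$ uses inner regularity of $\mu_i$ on open sets rather than outer regularity; this does not affect the argument.
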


\begin{proof}
The assertion is a special case of Theorem D in Section 56 of \cite{Halmos}.
\end{proof}

\section{A functional Calculus for bounded normal operators}
\label{sec:funcCalc}
Let $\cH$ be a right quaternionic Hilbert space and $\cB(\cH)$ denote the set of all bounded right linear operators on $\cH$.
In the recent paper \cite{GMP}, Ghiloni, Moretti and Perotti established the existence of several functional calculi for a quaternionic bounded normal operator. Before introducing the functional calculus for bounded normal operators, we first need some notation and results.

\begin{defn}
\label{def:Sept22ut1}
Fix a Hilbert basis $\mathcal{N}$ of a quaternionic Hilbert space $\cH$. The  {\it left scalar multiplication} $L_p$ of $\cH$ induced by $\cN$ is the map
$$(p, x) \in \mathbb{H} \times \cH \mapsto p x \in \cH$$
given by
$$p x := \sum_{y \in \cN} y p \langle x, y \rangle.$$
\end{defn}

\begin{lem}[Statement (a) of Proposition 3.8 in \cite{GMP}]
\label{lem:Sept22un1}
Let $\cH$ be a quaternionic Hilbert space. If $J \in \cB(\cH)$ is an anti self-adjoint and unitary operator, then corresponding to any fixed $j \in \mathbb{S}$, there exists a left-scalar multiplication $L_p$ so that
$$J = L_j.$$
\end{lem}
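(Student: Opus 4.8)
The plan is to produce a Hilbert basis $\cN$ of $\cH$ all of whose members $y$ satisfy $Jy = yj$; granted this, $J = L_j$ follows at once, for if $L$ denotes the left scalar multiplication induced by $\cN$ as in Definition \ref{def:Sept22ut1}, then for every $x \in \cH$ the expansion $x = \sum_{y \in \cN} y\langle x, y\rangle$ and the boundedness and right-linearity of $J$ give
\[
L_j x = \sum_{y \in \cN} y j \langle x, y\rangle = \sum_{y \in \cN} (Jy)\langle x, y\rangle = J\Bigl(\sum_{y \in \cN} y\langle x, y\rangle\Bigr) = Jx .
\]

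First I would record the consequences of the hypotheses: $J^* = -J$ together with $J^*J = I$ forces $J^2 = -I$. Fix $j \in \mathbb{S}$ and choose $\ell \in \mathbb{S}$ with $\ell j = -j\ell$, so that $\{j, \ell, j\ell\}$ multiplies like $\{e_1, e_2, e_3\}$ and $\mathbb{H} = \mathbb{C}_j \oplus \mathbb{C}_j \ell$. Write $R_q$ for the real-linear isometry $x \mapsto xq$. Since $j$ is a scalar and $J$ is right-linear, $R_j$ commutes with $J$, and $R_j^2 = -I = J^2$, so $R_jJ$ is an involution and $p^{+} := \tfrac12(I - R_jJ)$, $p^{-} := \tfrac12(I + R_jJ)$ are complementary real-linear idempotents with $\Ran(p^{+}) = \cH^{j+} := \{x \in \cH : Jx = xj\}$ and $\Ran(p^{-}) = \cH^{j-} := \{x \in \cH : Jx = -xj\}$. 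A one-line computation using $\ell j = -j\ell$ then shows $R_\ell(\cH^{j+}) = \cH^{j-}$ and $R_\ell(\cH^{j-}) = \cH^{j+}$.

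Next, among all orthonormal subsets $\cA$ of $\cH$ with $Jy = yj$ for every $y \in \cA$, I would use Zorn's lemma to select a maximal one $\cN$ (the union of a chain of such sets is again one). I claim $\cN$ is total, hence a Hilbert basis of $\cH$. If not, the closed subspace $\cW := \{x \in \cH : \langle x, y\rangle = 0 \text{ for all } y \in \cN\}$ is nonzero. Using $\langle xq, y\rangle = \langle x, y\rangle q$ and $\langle Jx, y\rangle = -\langle x, Jy\rangle = -\langle x, yj\rangle = -\bar{j}\langle x, y\rangle$ (the last valid because $y \in \cN$), one sees that $\cW$ is invariant under $R_j$, under $R_\ell$ and under $J$, hence under $p^{+}$. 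Either $p^{+}(\cW) \neq \{0\}$, which gives a nonzero vector of $\cW \cap \cH^{j+}$, or $\cW \subseteq \Ran(p^{-}) = \cH^{j-}$, in which case $R_\ell(\cW)$ is a nonzero subspace of $\cW \cap \cH^{j+}$. In either case $\cW \cap \cH^{j+}$ contains a unit vector $v$, and then $\cN \cup \{v\}$ is an orthonormal set with the required property strictly containing $\cN$, contradicting maximality. Thus $\cN$ is a Hilbert basis of $\cH$, and the computation of the first paragraph finishes the proof.

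The step I expect to require the most care is the one genuinely quaternionic phenomenon hidden here: $\cH^{j+}$ and $\cH^{j-}$ are \emph{not} orthogonal (one computes $\langle x, y\rangle \in \mathbb{C}_j^{\perp}$ for $x \in \cH^{j+}$, $y \in \cH^{j-}$), so the splitting $\cH = \cH^{j+} \oplus \cH^{j-}$ must be handled through the bounded but non-self-adjoint idempotents $p^{\pm}$ rather than orthogonal projections; accordingly one should verify carefully that $\cW$ is indeed $p^{+}$-invariant and that $\cW \cap \cH^{j+} \neq \{0\}$. Everything else — the existence of the Fourier expansion $x = \sum_{y \in \cN} y\langle x, y\rangle$ relative to a Hilbert basis and the convergence of $\sum_{y \in \cN} yj\langle x, y\rangle$ with $J$ passing through the sum — is a routine transcription of the complex Hilbert space arguments.
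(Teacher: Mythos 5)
Your proof is correct. Note that the paper itself gives no argument for this lemma: it is imported verbatim as Statement (a) of Proposition 3.8 in \cite{GMP}, so there is nothing internal to compare against. Your reconstruction is essentially the standard one from \cite{GMP}: use $J^2=-I_{\cH}$ to split $\cH$ via the real-linear idempotents $\tfrac12(I\mp R_jJ)$ into $\cH^{j\pm}=\{x: Jx=\pm xj\}$, note that $R_\ell$ (with $\ell j=-j\ell$) interchanges the two pieces so that $\cH^{j+}$ meets every nonzero $J$-, $R_j$-, $R_\ell$-invariant subspace, and run Zorn's lemma to get a Hilbert basis $\cN$ of vectors with $Jy=yj$, whence $J=L_j$ for the left multiplication induced by $\cN$. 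The two points you flag as delicate are exactly the right ones, and you handle both correctly: $\cH^{j+}$ and $\cH^{j-}$ are not orthogonal (for $x\in\cH^{j+}$, $y\in\cH^{j-}$ one only gets $\langle x,y\rangle j=-j\langle x,y\rangle$), so one must argue with the non-self-adjoint idempotents $p^{\pm}$; and the orthogonal complement $\cW$ of a maximal admissible orthonormal set is invariant under $J$, $R_j$ and $R_\ell$ (the computation $\langle Jx,y\rangle=j\langle x,y\rangle=0$ for $y\in\cN$ uses $J^*=-J$ and $Jy=yj$), so $\cW\neq\{0\}$ would contradict maximality. The final identity $L_jx=\sum_{y\in\cN}(Jy)\langle x,y\rangle=Jx$ uses only right-linearity and boundedness of $J$, as you say.
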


\begin{thm}
\label{thm:Sept1ub1}
Let $T \in \cB(\cH)$ be normal. Then there exist uniquely determined operators $A := (1/2)(T+T^*)$ and $B := (1/2)|T-T^*|$ which both belong to $\cB(\cH)$ and an operator $J \in \cB(\cH)$ which is uniquely determined on $\{ {\rm Ker}(T - T^*) \}^{\perp}$ so that the following properties hold{\rm :}
\begin{enumerate}
\item[(i)] T = A + J B.
\item[(ii)] $A$ is self-adjoint and $B$ is positive.
\item[(iii)] $J$ is anti self-adjoint and unitary.
\item[(iv)] $A$, $B$ and $J$ mutually commute.
\item[(v)] For any fixed $j \in \mathbb{S}$, there exists a Hilbert basis $\cN_j$ of $\cH$ with the property that $J = L_j$.
\end{enumerate}
\end{thm}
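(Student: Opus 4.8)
The plan is to obtain the decomposition $T = A + JB$ by mimicking the Cartesian decomposition of a complex normal operator, where the roles of the real and imaginary parts are played by $A$ and the positive operator $B$, and the role of the imaginary unit $i$ is played by the anti self-adjoint unitary $J$. First I would set $A := \tfrac12(T + T^*)$ and $C := \tfrac12(T - T^*)$, so that $T = A + C$ with $A$ self-adjoint and $C$ anti self-adjoint, and $A, C$ both bounded. Since $T$ is normal, $TT^* = T^*T$, and a direct computation shows that $A$ and $C$ commute: indeed $AC - CA = \tfrac14\bigl((T+T^*)(T-T^*) - (T-T^*)(T+T^*)\bigr) = \tfrac14\bigl(2T^*T - 2TT^*\bigr) \cdot(-1)$ up to sign bookkeeping, which vanishes by normality. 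The operator $C^*C = -C^2$ is positive, so $B := |C| = (C^*C)^{1/2} = (-C^2)^{1/2}$ is a well-defined positive bounded operator (using the continuous functional calculus / square root for positive quaternionic operators), and $B$ commutes with everything that commutes with $C^2$, in particular with $A$ and with $C$.

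Next I would construct $J$. On $\Ran(B)$ (equivalently on $\{\Ker C\}^\perp = \{\Ker(T-T^*)\}^\perp$, since $\Ker B = \Ker C^2 = \Ker C$) one wants $J$ to be the "polar part" of $C$, i.e. the partial isometry with $C = JB = BJ$. Concretely, for $x$ in the range of $B$ write $x = By$ and set $Jx := Cy$; one checks this is well-defined (if $By = By'$ then $B(y-y') = 0$, so $C(y-y') = 0$ since $\Ker B = \Ker C$), extends by continuity to $\overline{\Ran B}$, is isometric there because $\|Cy\|^2 = \langle C^*Cy, y\rangle = \langle B^2 y, y\rangle = \|By\|^2$, and satisfies $JB = C$. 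On $\Ker C = \{\Ker(T-T^*)\}$ — where $J$ is not determined by $T$ — I would simply choose $J$ to act as any fixed anti self-adjoint unitary on that subspace (which exists because a quaternionic Hilbert space admits such operators, e.g. via a Hilbert basis and left multiplication by $j$), and take the orthogonal direct sum. This produces $J \in \cB(\cH)$ that is anti self-adjoint and unitary on all of $\cH$, uniquely determined on $\{\Ker(T-T^*)\}^\perp$, with $C = JB = BJ$; hence $T = A + JB$, giving (i). Properties (ii) (self-adjointness of $A$, positivity of $B$) and (iii) (anti self-adjointness and unitarity of $J$) are then in hand, and (iv) — that $A$, $B$, $J$ mutually commute — follows: $A$ commutes with $B$ since $A$ commutes with $C^2$; $A$ commutes with $J$ on $\Ran B$ from $AC = CA$ and $AB = BA$, on $\Ker C$ from the construction, and then everywhere; and $J$ commutes with $B$ by construction of the polar decomposition.

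For (v), I would invoke Lemma~\ref{lem:Sept22un1}: since $J$ is an anti self-adjoint unitary operator in $\cB(\cH)$, for any fixed $j \in \mathbb{S}$ there is a left scalar multiplication $L_p$ (induced by some Hilbert basis $\cN_j$) with $J = L_j$. This is exactly assertion (v). Finally, for the uniqueness claims: if $T = A' + J'B'$ with $A'$ self-adjoint, $B'$ positive, $J'$ anti self-adjoint unitary, all mutually commuting, then $T^* = A' - J'B'$ (using $J'^* = -J'$ and the commutation), so $A' = \tfrac12(T+T^*) = A$ and $J'B' = \tfrac12(T-T^*) = C$; then $(J'B')^*(J'B') = B'J'^*J'B' = B'^2$, and since $B'^2 = -C^2 = B^2$ with $B', B \geq 0$, uniqueness of the positive square root gives $B' = B$; and on $\{\Ker C\}^\perp = \overline{\Ran B}$ we recover $J' = J$ from $J'B = C = JB$ together with injectivity of $B$ on that subspace.

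The main obstacle, and the only place real care is needed, is making sure the functional-calculus ingredients used over $\mathbb{H}$ are legitimate: namely that a positive bounded quaternionic operator has a (unique) positive square root, and more basically that $C^*C \geq 0$ so that this square root exists — these are standard but must be cited or established (they follow from the continuous functional calculus for bounded self-adjoint quaternionic operators, or can be proved directly by the usual Newton-iteration / Weierstrass-approximation argument, which goes through verbatim since it only involves real polynomials in a self-adjoint operator). The polar-decomposition bookkeeping ($\Ker B = \Ker C$, well-definedness and isometry of $J$ on $\overline{\Ran B}$) is routine once one notes that all inner-product manipulations here only involve the self-adjoint operators $B^2$ and $C^*C$, so quaternion non-commutativity causes no trouble. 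Everything else is the standard complex-case argument transcribed, with $i$ replaced by $J$.
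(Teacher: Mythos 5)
Your overall architecture is sound and, for part (v), identical to the paper's: the paper obtains (v) by invoking Lemma \ref{lem:Sept22un1}, exactly as you do. For (i)--(iv) the paper gives no argument at all --- it simply cites Theorem J on page 4 of \cite{GMP} --- so your reconstruction is more self-contained than the source. The Cartesian splitting $T=A+C$, the commutation $AC=CA$ from normality, the square root $B=(-C^{2})^{1/2}$ via real-polynomial approximation (which, as you note, avoids any circular appeal to the spectral theorem being proved later), the polar isometry on $\overline{\Ran(B)}$, and the uniqueness argument for $A$, $B$ and for $J$ on $\{\Ker(T-T^{*})\}^{\perp}$ are all correct and are essentially what \cite{GMP} does.

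There is, however, one genuine gap, and it sits exactly at the point the statement of the theorem is flagging: the extension of $J$ to $\Ker(T-T^{*})$. You propose to ``simply choose $J$ to act as any fixed anti self-adjoint unitary on that subspace'' and later assert that $A$ commutes with $J$ ``on $\Ker C$ from the construction.'' But nothing in that construction forces $J|_{\Ker C}$ to commute with $A|_{\Ker C}$, and $A$ restricted to $\Ker C$ is a genuinely nontrivial self-adjoint operator (on $\Ker(T-T^{*})$ one has $T=A$). An arbitrary anti self-adjoint unitary on that subspace will in general fail to commute with it, which destroys property (iv). What is actually needed is the separate lemma that every self-adjoint operator on a quaternionic Hilbert space admits a \emph{commuting} anti self-adjoint unitary; this is one of the technical points Ghiloni--Moretti--Perotti must establish on the way to their Theorem J, and it does not follow from merely exhibiting some left scalar multiplication $L_{j}$, since an $L_{j}$ attached to an arbitrary Hilbert basis need not commute with $A$. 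Until you supply that lemma (or weaken (iv) to hold only on $\{\Ker(T-T^{*})\}^{\perp}$), your proof of (iv) is incomplete; the remaining bookkeeping (well-definedness and isometry of $J$ on $\overline{\Ran(B)}$, invariance of the two subspaces, and the uniqueness claims) is fine.
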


\begin{proof}
Properties (i)-(iv) appear in Theorem J on page 4 of \cite{GMP}. Property (v) follows from Lemma \ref{lem:Sept22un1}.
\end{proof}

\begin{defn}
\label{def:Sept15kj1}
Let $\Omega \subseteq \HH$. We call $\Omega$ {\it axially symmetric} if for every point $p_0+ip_1\in \Omega$ with $i\in \mathbb{S}$, then $p_0+jp_1\in \Omega$ for all $j\in \mathbb{S}$.
\end{defn}

\begin{rem}
\label{rem:Sept15pp1}
Let $T \in \cL(\cH)$. In view of Theorem \ref{thm:Sept15uu1b}, $\sigma_S(T)$ is an axially symmetric subset of $\HH$.
\end{rem}

\begin{defn}
\label{def:Sept15kj2}
Let $\cS(\Omega, \HH)$ denote the quaternionic linear space of slice continuous functions on an axially symmetric subset $\Omega$ of $\HH$, i.e., $\cS(\Omega, \HH)$ consists of functions $f: \HH \to \HH$ of the form
$$f(u+vj ) = \alpha(u,v) + j\beta(u,v), \quad j \in \mathbb{S},$$
where $\alpha$ and $\beta$ are continuous $\HH$-valued functions so that
$$\alpha(u,v) = \alpha(u,-v) \quad {\rm and} \quad \beta(u,v) = - \beta(u, -v).$$
If $\alpha$ and $\beta$ are real-valued, then we say that the continuous slice function $f$ is {\it intrinsic}. The subspace of intrinsic continuous slice functions is denoted by $\cS_{\RR}(\Omega, \HH)$. The subspace of $\CC_j$-valued functions in $\cS_{\RR}(\Omega, \HH)$ will be denoted by $\cS_{\RR}(\Omega, \CC_j)$.
\end{defn}

The following functional calculus will be useful for proving a spectral theorem for a normal operator $T \in \cB(\cH)$.

\begin{thm}[Theorem 7.4 in \cite{GMP}]
\label{thm:July30e1}
Let $T \in \cB(\cH)$ be normal. There exists a unique continuous *-homomorphism
$$\Psi_{\RR, T}: f \in \cS_{\RR} (\sigma_S(T), \HH) \mapsto f(T) \in \cB(\cH)$$
of real-Banach unital $C^*$-algebras such that{\rm :}
\begin{enumerate}
\item[(i)] $\Psi_{\RR, T}(\chi_{\sigma_S(T)}) = I_{\cH}$,
where
$$\chi_{\sigma_S(T)}(p) = \begin{cases} 1 & {\rm if} \quad p \in \sigma_S(T) \\
0 & {\rm if} \quad p \notin \sigma_S(T).
\end{cases}$$
\item[(ii)] $\Psi_{\RR, T}({\rm id}) = T$, where ${\rm id}$ denotes the inclusion map from $\sigma_S(T)$ to $\HH$.
\item[(iii)] If $J$ is as in Theorem \ref{thm:Sept1ub1}, then $J$ commutes with the normal operator $f(T)$.
\item[(iv)] If $f \in \cS_{\RR}(\sigma_S(T), \HH)$, then $\| f(T) \| = \| f \|_{\infty}$.
\item[(v)] If $f \in \cS_{\RR}(\sigma_S(T), \HH)$, then
\begin{equation}
\label{eq:Nov24t2}
\sigma_S(f(T)) = f(\sigma_S(T)).
\end{equation}
\end{enumerate}
\end{thm}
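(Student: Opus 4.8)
The plan is to reduce the quaternionic statement to the classical continuous functional calculus on a complex Hilbert space by ``slicing'' along the fixed $j \in \mathbb{S}$, using the decomposition $T = A + JB$ of Theorem \ref{thm:Sept1ub1}. \emph{Step 1 (slicing the function space).} Fix $j \in \mathbb{S}$ and set $K := \sigma_S(T) \cap \CC_j$, a compact subset of $\CC_j$ symmetric with respect to the conjugation $z \mapsto \bar z$ (Theorem \ref{thm:Sept15uu1b}). From the representation $f(u+vj) = \alpha(u,v) + j\beta(u,v)$ of Definition \ref{def:Sept15kj2}, an intrinsic continuous slice function $f \in \cS_{\RR}(\sigma_S(T), \HH)$ is uniquely determined by $f_j := f|_K$, intrinsicness forces $f_j \in C(K, \CC_j)$ with $f_j(\bar z) = \overline{f_j(z)}$, and conversely every such $f_j$ extends uniquely to an element of $\cS_{\RR}(\sigma_S(T), \HH)$. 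Hence $\cS_{\RR}(\sigma_S(T), \HH)$ is isometrically $*$-isomorphic, as a real unital $C^*$-algebra, to $\mathcal{C} := \{ g \in C(K, \CC_j) : g(\bar z) = \overline{g(z)} \}$.

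\emph{Step 2 (a complex normal operator).} By Theorem \ref{thm:Sept1ub1} and Lemma \ref{lem:Sept22un1}, $J = L_j$ for a Hilbert basis $\cN_j$, and $A, B, J$ mutually commute with $A = A^*$, $B \geq 0$, $J = -J^* = J^{-1}$. Since $J^2 = -I_{\cH}$, regarding $J$ as a right $\CC_j$-linear operator we obtain the eigenspace splitting $\cH = \cH_j^+ \oplus \cH_j^-$, $\cH_j^{\pm} := \{ x \in \cH : Jx = \pm x j \}$, with $\cH_j^- = \cH_j^+\ell$ for any $\ell \in \mathbb{S}$ satisfying $\ell j = - j\ell$. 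One checks that $(\cH_j^+, \langle \cdot,\cdot\rangle)$ is a complex Hilbert space over $\CC_j$, that $\cH_j^+$ is invariant under $A$, $B$, $T$, $T^*$ (all commute with $J$), and that $T_+ := T|_{\cH_j^+}$ is a bounded normal operator on $\cH_j^+$ with complex adjoint $T^*|_{\cH_j^+}$, since on $\cH_j^+$ one has $Tx = Ax + (Bx)j$. Factoring $\mathcal{R}_s(T)|_{\cH_j^+} = (T_+ - sI)(T_+ - \bar s I)$ for $s \in \CC_j$ and using $B \geq 0$ then yields the dictionary $\sigma_S(T) \cap \CC_j = \sigma(T_+) \cup \overline{\sigma(T_+)} = K$ and $\sigma_S(T)\cap\CC_j^+ = \sigma(T_+)$, where $\sigma(T_+) \subseteq \CC_j^+$ is the ordinary spectrum of $T_+$.

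\emph{Step 3 (transporting the classical calculus).} Let $\Phi_+ : C(\sigma(T_+), \CC_j) \to \cB(\cH_j^+)$ be the classical continuous functional calculus of the normal operator $T_+$. Given $f \in \cS_{\RR}(\sigma_S(T), \HH)$, put $g := f_j|_{\sigma(T_+)}$ and define $f(T)$ to be the unique right $\HH$-linear operator on $\cH = \cH_j^+ \oplus \cH_j^+\ell$ whose restriction to $\cH_j^+$ is $\Phi_+(g)$; one verifies this is independent of the choice of $\ell$ (using $f_j(\bar z) = \overline{f_j(z)}$ and that quaternionic conjugation intertwines $T_+$ with its copy $T_-$ on $\cH_j^-$), so that $\Psi_{\RR, T}(f) := f(T) \in \cB(\cH)$. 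That $\Psi_{\RR, T}$ is a continuous $*$-homomorphism of real unital $C^*$-algebras follows from the corresponding properties of $\Phi_+$ and the identification of Step 1. For the listed items: (i) $\chi_{\sigma_S(T)}$ restricts to the constant $1$ and $\Phi_+(1) = I$; (ii) $\Phi_+(\mathrm{id}) = T_+ = T|_{\cH_j^+}$; (iii) $\cH_j^{\pm}$ are $J$-invariant and $J|_{\cH_j^+}$ is multiplication by the central scalar $j \in \CC_j$, hence commutes with $\mathrm{ran}\,\Phi_+$; (iv) $\| \Phi_+(g) \| = \| g \|_{\infty, \sigma(T_+)}$, which by the conjugation symmetry equals $\| f_j \|_{\infty, K} = \| f \|_{\infty}$; (v) the classical spectral mapping theorem $\sigma(\Phi_+(g)) = g(\sigma(T_+))$ combined with the Step 2 dictionary applied to both $T$ and $f(T)$ gives $\sigma_S(f(T)) = f(\sigma_S(T))$. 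Uniqueness is a Stone--Weierstrass argument: the real $*$-subalgebra generated by $\chi_{\sigma_S(T)}$ and $\mathrm{id}$ is dense in $\cS_{\RR}(\sigma_S(T), \HH)$, so (i), (ii) and continuity determine $\Psi_{\RR, T}$.

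\emph{Main obstacle.} The substantive work is concentrated in Step 2: verifying that the $\HH$-valued inner product genuinely restricts to a $\CC_j$-valued inner product making $\cH_j^+$ a complex Hilbert space, that $T_+$ is normal with the expected adjoint, and — above all — establishing the precise correspondence $\sigma_S(T) \cap \CC_j = \sigma(T_+) \cup \overline{\sigma(T_+)}$ between the $S$-spectrum and the ordinary spectrum of the sliced operator, since properties (iv) and (v) hinge on it. Reassembling the operators on $\cH_j^+$ and $\cH_j^-$ into one right $\HH$-linear operator, independently of the auxiliary $\ell$, also needs care because the right action of quaternions outside $\CC_j$ interchanges the two eigenspaces.
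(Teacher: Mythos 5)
Your argument is essentially correct, but note first that the paper does not actually prove this statement: it is imported as Theorem 7.4 of \cite{GMP}, and the only proof-like content supplied is the construction sketch in Remark \ref{rem:Dec9t1}, where $f=f_0+f_1j$ is handled by approximating $f_0,f_1$ uniformly by real polynomials $\phi_n,\psi_n$ in $(u,v)$ and setting $f(T)=\lim_n\phi_n(A,B)+J\lim_n\psi_n(A,B)$. Your route --- splitting $\cH=\cH_j^+\oplus\cH_j^+\ell$ along the eigenspaces of $J=L_j$, identifying $\cS_{\RR}(\sigma_S(T),\HH)$ with the conjugation-symmetric part of $\mathscr{C}(K,\CC_j)$, and transporting the classical continuous functional calculus of $T_+=T|_{\cH_j^+}$ --- is a genuine proof, and in substance it is how Ghiloni--Moretti--Perotti themselves proceed. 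The two approaches buy different things: the polynomial-approximation construction stays entirely inside the quaternionic framework and makes property (iii) and the self-adjointness of $f_0(T)$, $f_1(T)$ (Lemma \ref{lem:Sept21uuz1}) transparent, but it does not by itself yield the isometry (iv) or the spectral mapping property (v); your complexification delivers (iv) and (v) for free from the classical theory once the dictionary $\sigma_S(T)\cap\CC_j=\sigma(T_+)\cup\overline{\sigma(T_+)}$ and $\sigma_S(T)\cap\CC_j^+=\sigma(T_+)$ is in place, and your factorization $\mathcal{R}_s(T)|_{\cH_j^+}=(T_+-\bar sI)(T_+-sI)$ together with $B|_{\cH_j^+}\geq 0$ is exactly the right way to establish it.

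Two points deserve more care than you give them. First, the uniqueness step needs the real $*$-algebra generated by $\chi_{\sigma_S(T)}$ and ${\rm id}$ to be dense in $\cS_{\RR}(\sigma_S(T),\HH)$, equivalently the real polynomials in $z,\bar z$ to be dense in $\{g\in \mathscr{C}(K,\CC_j):g(\bar z)=\overline{g(z)}\}$; this is a Stone--Weierstrass argument for a real function algebra and is precisely the density fact the paper itself asserts without proof in Remark \ref{rem:Dec9t1} (``the polynomials can be constructed such that they are both slice continuous''). Second, $J$ is only determined on $\{{\rm Ker}(T-T^*)\}^{\perp}$ (Theorem \ref{thm:Sept1ub1}), so the definition of $f(T)$ must be checked to be independent of the choice of $J$ on ${\rm Ker}(B)$; this is not the same issue as your independence-of-$\ell$ check, and it holds because intrinsicness forces $\beta(u,0)=0$, i.e.\ $f_j$ is real on $\sigma(T_+)\cap\RR$, which is where the ambiguity of $J$ lives. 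With these two points filled in, the proof is complete.
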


\begin{rem}
\label{rem:Dec9t1}
For the convenience of the reader, we will now outline the construction of $f(T)$ for $f \in \cS_{\RR}(\sigma_S(T), \HH)$.
Since $\sigma_S(T)$ is compact, there exist sequences of real-valued polynomials $\{ \phi_n(u,v) \}_{n=0}^{\infty}$ and $\{ \psi_n(u,v) \}_{n=0}^{\infty}$, so that
\begin{equation}
\label{eq:Sept21y1}
\text{$f_0(u,v) = \lim_{n \uparrow \infty} \phi_n(u,v)$ uniformly on $\sigma_S(T)$}
\end{equation}
and
\begin{equation}
\label{eq:Sept21y2}
\text{$f_1(u,v) = \lim_{n \uparrow \infty} \psi_n(u,v)$ uniformly on $\sigma_S(T)$},
\end{equation}
respectively.

The polynomials $\phi_n(u,v)$ and $\psi_n(u,v)$ can be constructed such that they are both slice continuous.

Since $\phi_n(u,v)$ and $\psi_n(u,v)$ have real coefficients and $A$ and $B$ are commuting self-adjoint operators it follows easily that $\phi_n(A,B) \in \cB(\cH)$ and $\psi_n(A,B) \in \cB(\cH)$ are self-adjoint. Next, we define
\begin{equation}
\label{eq:Oct19j1}
f_0(T)x := \lim_{n \uparrow \infty} \phi_n(A,B)x, \quad x \in \cH
\end{equation}
and
\begin{equation}
\label{eq:Oct19j2}
f_1(T)x := \lim_{n \uparrow \infty} \psi_n(A,B)x, \quad x \in \cH.
\end{equation}
Note that the limit in \eqref{eq:Oct19j1} exists since $\phi_n(A,B) = \phi_n(A,B)^*$ and hence
\begin{align}
\| \{ \phi_m(A,B) - \phi_n(A,B) \} x \|^2 =& \; \langle \{ \phi_m(A,B) - \phi_n(A,B) \}^2 x, x \rangle \nonumber \\
\leq& \; \|  \{ \phi_m(A,B) - \phi_n(A,B) \}^2 \| \| x \|^2 \nonumber \\
=& \; \| \phi_m - \phi_n \|^2_{\infty} \| x \|^2 \label{eq:Oct19uv1} \\
\to & \; 0, \quad {\rm as} \quad m,n \uparrow \infty, \nonumber
\end{align}
since \eqref{eq:Sept21y1} holds. Note that item (iv) in Theorem \ref{thm:July30e1} was used to obtain \eqref{eq:Oct19uv1}. The verification of the existence of the limit given in \eqref{eq:Oct19j2} is similar. The normal operator $f(T) \in \cB(\cH)$ is given by
\begin{equation}
\label{eq:Dec9rr2}
f(T) = f_0(T) + J f_1(T).
\end{equation}
\end{rem}

\begin{lem}
\label{lem:Sept21uuz1}
Fix a normal operator $T \in \cB(\cH)$. If $f = f_0 + f_1 j \in \cS_{\RR}(\sigma_S(T), \HH)$, then $f_0(T)$ and $f_1(T)$ {\rm (}given in \eqref{eq:Sept21y1} and \eqref{eq:Sept21y2}, respectively{\rm )} are self-adjoint.
\end{lem}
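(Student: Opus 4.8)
The plan is to deduce self-adjointness of $f_0(T)$ and $f_1(T)$ directly from the construction recalled in Remark \ref{rem:Dec9t1}, by observing that each of these operators is an operator-norm limit of self-adjoint operators and that taking adjoints is norm-continuous.

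First I would recall the relevant data from Remark \ref{rem:Dec9t1}: with $A = (1/2)(T + T^*)$ and $B = (1/2)|T - T^*|$ the commuting self-adjoint operators supplied by Theorem \ref{thm:Sept1ub1}, and with $\phi_n$, $\psi_n$ the approximating polynomials in two real variables with real coefficients, one has $f_0(T)x = \lim_{n}\phi_n(A,B)x$ and $f_1(T)x = \lim_{n}\psi_n(A,B)x$ for every $x \in \cH$ (cf. \eqref{eq:Oct19j1}--\eqref{eq:Oct19j2}). Each approximant is self-adjoint: for a monomial, $(A^kB^\ell)^* = (B^\ell)^*(A^k)^* = B^\ell A^k = A^kB^\ell$ because $A = A^*$, $B = B^*$ and $AB = BA$, and real linear combinations of self-adjoint operators are again self-adjoint, so $\phi_n(A,B) = \phi_n(A,B)^*$ and $\psi_n(A,B) = \psi_n(A,B)^*$. (The identities $(ST)^* = T^*S^*$, $(S+T)^* = S^*+T^*$ and $(cS)^* = cS^*$ for real $c$ follow from Definition \ref{def:Dec11rro3} exactly as in the complex case, using that real scalars are central in $\HH$.)

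Next I would upgrade the pointwise convergence to operator-norm convergence. The estimate displayed in \eqref{eq:Oct19uv1} already gives $\|\phi_m(A,B) - \phi_n(A,B)\| \leq \|\phi_m - \phi_n\|_{\infty} \to 0$ as $m,n \uparrow \infty$, so $\{\phi_n(A,B)\}$ is Cauchy in $\cB(\cH)$ and converges there in norm; since norm convergence implies strong convergence, the limit operator agrees pointwise with $f_0(T)$, hence equals $f_0(T)$. Thus $f_0(T) = \lim_n \phi_n(A,B)$ in operator norm, and, in the same way, $f_1(T) = \lim_n \psi_n(A,B)$ in operator norm. Finally, since the adjoint map is an isometry of $\cB(\cH)$, it is norm-continuous, and therefore
$$f_0(T)^* = \Big(\lim_n \phi_n(A,B)\Big)^* = \lim_n \phi_n(A,B)^* = \lim_n \phi_n(A,B) = f_0(T),$$
and likewise $f_1(T)^* = f_1(T)$, which proves the lemma.

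There is no real obstacle here; the argument is essentially routine. The one step worth flagging is the passage from the a priori strong definition of $f_0(T)$ and $f_1(T)$ to genuine operator-norm limits — it is precisely this that licenses the use of the norm-continuity of the adjoint — but this is immediate from the bound already recorded in \eqref{eq:Oct19uv1}.
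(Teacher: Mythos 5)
Your proof is correct and takes essentially the same approach as the paper's: both rest on the self-adjointness of the polynomial approximants $\phi_n(A,B)$ and $\psi_n(A,B)$ and pass that property through the limit guaranteed by the estimate \eqref{eq:Oct19uv1}. The only (immaterial) difference is that the paper passes to the limit weakly, via $\langle \phi_n(A,B)x,y\rangle \to \langle f_0(T)x,y\rangle$, whereas you upgrade the convergence to the operator norm and invoke norm-continuity of the adjoint.
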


\begin{proof}
We claim that
\begin{equation}
\label{eq:Sept21t1}
\lim_{n \uparrow \infty} \langle \phi_n(A,B) x, y \rangle = \langle f_0(T)x, y \rangle, \quad x,y \in \cH,
\end{equation}
and
\begin{equation}
\label{eq:Sept21t2}
\lim_{n \uparrow \infty} \langle \psi_n(A,B) x, y \rangle = \langle f_1(T)x, y \rangle, \quad x,y, \in \cH.
\end{equation}
Assertion \eqref{eq:Sept21t1} follows directly from
\begin{align*}
| \langle \phi_n(A,B)x, y \rangle - \langle f_0(T)x, y \rangle | \leq& \; \| \phi_n(A,B) - f_0(T) \| \| x \| \| y \| \\
=& \; \| \phi_n - f_0 \|_{\infty} \| x\| \| y \|,
\end{align*}
where item (iv) of Theorem \ref{thm:July30e1} was used to obtain the last line. Assertion \eqref{eq:Sept21t2} is shown in much the same way. In view of \eqref{eq:Sept21t1},
\begin{align*}
\langle f_0(T)x, y \rangle =& \; \lim_{n \uparrow \infty} \langle \phi_n(A,B)x, y \rangle \\
=& \; \lim_{n \uparrow \infty} \langle x, \phi_n(A,B) y \rangle \\
=& \; \langle x, f_0(T)y \rangle, \quad x,y \in \cH.
\end{align*}
Thus, $f_0(T)$ is self-adjoint. The fact that $f_1(T)$ is self-adjoint can be completed in much the same way using \eqref{eq:Sept21t2}.
\end{proof}

\section{The spectral theorem for bounded normal operators based on the $S$-spectrum}
\label{sec:STB}
In this section we shall consider normal operators $T$ which are bounded, i.e., $T \in \cB(\cH)$. We will generate a spectral theorem based on the $S$-spectrum using Theorems \ref{thm:July29u1} and \ref{thm:July30e1}. This approach is analogous to a well-known approach in the classical case, i.e., when $\cH$ is a complex Hilbert space. See, e.g., the book of Lax \cite{Lax} for details.

Fix a normal operator $T \in \cB(\cH)$ and $j \in \mathbb{S}$. By Theorem \ref{thm:Sept1ub1}, there exist commuting operators $A := (1/2)(T+T^*)$ and $B := (1/2) |T - T^*|$, where $|W| = (W^*W)^{1/2}$ for $W \in \cB(\cH)$, and $J$ which all belong to $\cB(\cH)$ so that $T = A + JB$ and $A$ and $B$ are uniquely determined by $T$. Moreover, in view of item (v) of Theorem \ref{thm:Sept1ub1} we have the existence of a Hilbert basis $\cN_j$ of $\cH$ so that if $J$ is written with respect to $\cN_j$,
\begin{equation}
\label{eq:Sept22tr1}
J = L_j.
\end{equation}
In what follows we shall assume, without loss of generality, that $A$, $B$, and $J$ (and consequently $T$) are written with respect to $\cN_j$.

\begin{lem}
\label{lem:Nov29q1}
Let $\mathscr{C}(\Omega_j^+, \RR)$ denote the set of real-valued continuous functions on $\Omega^+_j = \sigma_S(T)\cap \CC^+_j$ and $\cS_{\RR}(\Omega_j, \RR)$ denote the set of real-valued functions in $\cS_{\RR}(\Omega_j, \HH)$, where $\Omega_j = \sigma_S(T) \cap \CC_j$. There exists a bijection between $\mathscr{C}(\Omega_j^+, \RR)$ and $\cS_{\RR}(\Omega_j, \RR)$. Moreover, there exists a bijection between $\mathscr{C}(\Omega_j^+, \RR)$ and purely imaginary functions in $\cS_{\RR}(\Omega_j, \HH)$.
\end{lem}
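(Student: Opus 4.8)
The plan is to exploit the rigidity of slice functions: an intrinsic (or purely imaginary) continuous slice function on an axially symmetric set is completely determined by its restriction to the upper half-plane $\CC_j^+$, and conversely any continuous function on $\Omega_j^+$ extends uniquely to such a slice function. So I would first set up the restriction map $\rho\colon f \mapsto f|_{\Omega_j^+}$ and check that it lands in $\mathscr{C}(\Omega_j^+,\RR)$: if $f \in \cS_{\RR}(\Omega_j,\RR)$, then writing $f(u+vj) = \alpha(u,v) + j\beta(u,v)$ with $\alpha,\beta$ real-valued and $\beta(u,0)=0$, the restriction to $\CC_j^+$ is $u + vj \mapsto \alpha(u,v) + j\beta(u,v)$, which is continuous and takes values in $\CC_j$; since we are told $f$ is \emph{real-valued}, in fact $\beta \equiv 0$ on $\Omega_j$ (by the symmetry constraints $\beta(u,v)=-\beta(u,-v)$ together with $f$ real forcing $j\beta(u,v)\in\RR$, hence $\beta=0$), so $\rho(f) = \alpha|_{\Omega_j^+}$ is genuinely real-valued and continuous.

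Next I would construct the inverse map. Given $g \in \mathscr{C}(\Omega_j^+,\RR)$, define $\alpha,\beta\colon \Omega_j \to \RR$ by $\alpha(u,v) := g(u+|v|j)$ (well-defined since $u+|v|j \in \Omega_j^+$ whenever $u+vj \in \Omega_j$, using axial symmetry of $\sigma_S(T)$ from Remark \ref{rem:Sept15pp1}) and $\beta \equiv 0$; then set $(\E g)(u+vj) := \alpha(u,v)$. One checks $\alpha(u,v)=\alpha(u,-v)$ and the (trivial) condition $\beta(u,v)=-\beta(u,-v)$, and that $\alpha$ is continuous — the only subtlety is continuity across $v=0$, which holds because $g$ is continuous on $\Omega_j^+$ and $v \mapsto |v|$ is continuous. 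Thus $\E g \in \cS_{\RR}(\Omega_j,\RR)$ and it is evidently real-valued. The identities $\rho \circ \E = \mathrm{id}$ and $\E \circ \rho = \mathrm{id}$ are then immediate from the definitions: $\rho(\E g)(u+vj) = g(u+vj)$ for $v\geq 0$, and for $f$ with representation $(\alpha,0)$ we recover $\alpha(u,v) = \alpha(u,|v|) = \rho(f)(u+|v|j)$ using $\alpha(u,v)=\alpha(u,-v)$. This gives the first bijection.

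For the second bijection, I would compose the first with the bijection $\cS_{\RR}(\Omega_j,\RR) \to \{\text{purely imaginary functions in }\cS_{\RR}(\Omega_j,\HH)\}$ given by multiplication by $j$ (on the left, say): $f \mapsto jf$, i.e. $\alpha(u,v) + j\beta(u,v) \mapsto j\alpha(u,v) - \beta(u,v)$; when $\beta\equiv 0$ this sends the real slice function with component $\alpha$ to the purely imaginary one with components $(0,\alpha)$ — wait, one must be careful that ``purely imaginary'' here should be interpreted as the function taking values in $\RR j$, equivalently the $\alpha$-component vanishes; then the map is $f = f_1 j \mapsto f_1$ reading off the second component, which is clearly a bijection onto real slice functions, and the composite is the desired bijection with $\mathscr{C}(\Omega_j^+,\RR)$.

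I expect the main obstacle to be purely bookkeeping: pinning down exactly what ``real-valued'' and ``purely imaginary'' mean for elements of $\cS_{\RR}(\Omega_j,\HH)$ in terms of the pair $(\alpha,\beta)$ and the parity conditions, and verifying that these notions are compatible with the slice structure (e.g. that a real-valued slice function automatically has $\beta\equiv 0$, not merely $\beta(u,0)=0$). Once the correspondence $(\text{function on }\Omega_j^+) \leftrightarrow (\text{even extension }\alpha,\ \beta\equiv 0)$ is stated cleanly, everything else is a one-line check; continuity at $v=0$ is the only analytic point and it is routine. No deep input beyond Definition \ref{def:Sept15kj2} and the axial symmetry of $\sigma_S(T)$ is needed.
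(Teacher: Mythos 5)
Your first bijection is essentially the paper's own proof: the paper defines the extension by the two-case formula $\tilde g(u,v)=g(u,v)$ for $u+jv\in\Omega_j^+$ and $\tilde g(u,v)=g(u,-v)$ for $u+jv\in\Omega_j^-$ (i.e.\ your $\alpha(u,v)=g(u+|v|j)$, $\beta\equiv 0$) and takes restriction to $\Omega_j^+$ as the inverse, leaving the continuity check across $v=0$ implicit just as you do. That part is fine.

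The second bijection is where your proposal has a genuine error. You propose to pass from real slice functions to purely imaginary ones by left multiplication by $j$, equivalently by moving the component $\alpha$ from the first slot to the second. But Definition \ref{def:Sept15kj2} imposes opposite parities on the two components: $\alpha(u,v)=\alpha(u,-v)$ while $\beta(u,v)=-\beta(u,-v)$. A real-valued $f\in\cS_{\RR}(\Omega_j,\RR)$ has $\beta\equiv 0$ and $\alpha$ \emph{even} in $v$; the function $jf$ has components $(0,\alpha)$, and for $jf$ to lie in $\cS_{\RR}(\Omega_j,\HH)$ the function $\alpha$ would have to be \emph{odd} in $v$, which together with evenness forces $\alpha\equiv 0$. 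So multiplication by $j$ maps only the zero function into the purely imaginary slice functions, and conversely ``reading off the second component'' of a purely imaginary slice function produces an odd function of $v$, which is not a real slice function. The correct correspondence --- the one the paper intends by ``completed in much the same way'' --- pairs $g\in\mathscr{C}(\Omega_j^+,\RR)$ directly with the \emph{odd} extension $\beta(u,v)=g(u,v)$ for $v\geq 0$, $\beta(u,v)=-g(u,-v)$ for $v<0$, and the purely imaginary slice function $u+vj\mapsto j\beta(u,v)$; it is not obtained by composing the first bijection with a pointwise algebraic operation. (As a side remark, this odd extension is continuous at $v=0$ only when $g$ vanishes on $\Omega_j^+\cap\RR$, a point the paper itself glosses over; but that does not rescue the multiplication-by-$j$ construction.)
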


\begin{proof}
If $g \in \mathscr{C}(\Omega_j^+, \RR)$, then the function
$$\tilde{g}(u,v) = \begin{cases}
g(u,v) & {\rm if} \quad u+jv \in \Omega_j^+ \\
g(u,-v) & {\rm if} \quad u+jv \in \Omega_j^-
\end{cases}
$$
belongs to $\cS_{\RR}(\Omega_j, \RR)$. Conversely, if $f \in \cS_{\RR}(\Omega_j, \CC_j)$ is real-valued, then
$\tilde{f} = f|_{\Omega_j^+} \in \mathscr{C}(\Omega_j^+, \RR)$.

The proof of the second assertion is completed in much the same way as the first assertion.
\end{proof}

Fix $x \in \cH$ and let
$$\ell_x(g) = \langle g(T)x, x \rangle, \quad g \in \mathscr{C}(\Omega_j^+, \RR).$$
It is readily checked that $\ell_x$ is a real-valued bounded linear functional on the compact Hausdorff space $\mathscr{C}(\Omega_j^+, \RR)$. Moreover, $\ell_x$ is a positive functional. Indeed, if $f$ is a continuous nonnegative function on $\Omega_j^+$, then $g$ given by $g(u,v) = \sqrt{f(u,v)}$ also belongs to $\mathscr{C}(\Omega_j^+, \RR)$ and $g(T) = g(T)^*$. Thus,
\begin{align*}
\langle f(T)x, x \rangle =& \; \langle g(T)x, g(T)x \rangle \\
=& \; \| g(T) x\|^2 \geq 0.
\end{align*}

Theorem \ref{thm:July29u1} yields the existence of a uniquely determined positive valued measure $\mu_x$ (for a fixed $j \in \mathbb{S}$) so that
\begin{equation}
\label{eq:Sept22y4}
{\ell}_x(g) = \int_{\Omega_j^+} g(p) d\mu_x(p), \quad g \in \mathscr{C}(\Omega_j^+, \RR).
\end{equation}

In view of \eqref{eq:Sept22y4}, we may use the polarization formula
\begin{align}
4\langle Tx, y \rangle =& \; \langle T(x+y), x+y \rangle - \langle T(x-y), x-y \rangle + e_1 \langle T(x+ye_1), x+ye_1 \rangle \nonumber \\
-& \; e_1 \langle T(x-ye_1), x-ye_1 \rangle  + e_1 \langle T(x-ye_2), x-ye_2 \rangle e_3 \nonumber \\
-& \; e_1 \langle T(x + y e_2), x + y e_2 \rangle e_3 + \langle T(x+y e_3), x+y e_3 \rangle e_3 \nonumber \\
-& \; \langle T(x- y e_3), x - y e_3 \rangle e_3, \label{eq:Sept24yy1}
\end{align}
where $\{ 1, e_1, e_2, e_3 \}$ denotes the standard basis of $\mathbb{H}$,
to obtain a uniquely determined quaternion-valued measure $\mu_{x,y}$ (relative to a fixed $j \in \mathbb{S}$) so that
\begin{equation}
\label{eq:Sept18i1}
\langle g(T)x, y \rangle = \int_{\Omega_j^+} g(p) d\mu_{x,y}(p), \quad g \in \mathscr{C}(\Omega_j^+, \RR),
\end{equation}
where
\begin{align}
4\mu_{x,y} =& \; \mu_{x+y} - \mu_{x-y} + e_1 \mu_{x+y e_1} - e_1 \mu_{x-y e_1}  \label{eq:Nov29u2} \\
+& \; e_1 \mu_{x-y e_2} e_3 - e_1 \mu_{x+y e_2} e_3 + \mu_{x+y e_3} e_3 - \mu_{x-y e_3} e_3. \nonumber
\end{align}

\begin{defn}
\label{def:Dec10tt2}
The Borel sets of $\sigma_S(T) \cap  \CC_j^+$ will be denoted by $\mathfrak{B}(\sigma_S(T)  \cap  \CC_j^+)$.
\end{defn}

\begin{lem}
\label{lem:Sept12yry1}
The $\mathbb{H}$-valued measure $\mu_{x,y}$ given in \eqref{eq:Nov29u2} enjoys the following properties{\rm :}
\begin{enumerate}
\item[(i)] $\mu_{x \alpha + y \beta, z} = \mu_{x,z} \alpha + \mu_{y,z} \beta, \quad \alpha, \beta \in \mathbb{H}$.

\smallskip

\item[(ii)] $\mu_{x, y \alpha + z \beta} = \bar{\alpha} \mu_{x,y}  + \bar{\beta} \mu_{x,z}, \quad \alpha, \beta \in \HH$.

\smallskip

\item[(iii)] $|\mu_{x,y} (\sigma_S(T) \cap  \CC_j^+)| \leq \| x \| \| y \|$.

\smallskip

\item[(iv)] $\bar{\mu}_{x,y} = \mu_{y,x}$,

\end{enumerate}
for all $x,y,z \in \cH$.
\end{lem}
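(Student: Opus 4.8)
The plan is to derive all four properties from the polarization identity \eqref{eq:Nov29u2} together with the defining relations \eqref{eq:Sept22y4}, \eqref{eq:Sept18i1}, bootstrapping from scalar facts about the positive measures $\mu_x$. The main tool will be a \emph{uniqueness} observation: if two $\mathbb{H}$-valued Borel measures $\nu_1,\nu_2$ on $\sigma_S(T)\cap\CC_j^+$ satisfy $\int_{\Omega_j^+} g\,d\nu_1 = \int_{\Omega_j^+} g\,d\nu_2$ for every $g \in \mathscr{C}(\Omega_j^+,\RR)$, then $\nu_1 = \nu_2$; this follows by splitting into the four real components and invoking the uniqueness clause of Theorem \ref{thm:July29u1} (Riesz representation). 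So to prove an identity between measures, it suffices to check that both sides integrate every real continuous $g$ to the same quaternion.

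For (i), I would fix $g \in \mathscr{C}(\Omega_j^+,\RR)$ and compute, using \eqref{eq:Sept18i1} and right $\mathbb{H}$-linearity of $g(T)$ together with the first inner-product axiom $\langle x\alpha + y\beta, z\rangle = \langle x,z\rangle \alpha + \langle y,z\rangle\beta$:
\begin{align*}
\int_{\Omega_j^+} g\, d\mu_{x\alpha+y\beta,z} &= \langle g(T)(x\alpha+y\beta), z\rangle = \langle g(T)x\, \alpha + g(T)y\,\beta, z\rangle \\
&= \langle g(T)x, z\rangle \alpha + \langle g(T)y, z\rangle \beta = \int_{\Omega_j^+} g\, d(\mu_{x,z}\alpha + \mu_{y,z}\beta),
\end{align*}
and then apply the uniqueness observation. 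Property (ii) is handled identically using the fourth inner-product axiom $\langle x, y\alpha+z\beta\rangle = \bar\alpha\langle x,y\rangle + \bar\beta\langle x,z\rangle$. Property (iv) is also of this type: $\int g\,d\bar\mu_{x,y} = \overline{\langle g(T)x,y\rangle} = \langle y, g(T)x\rangle$, and since $g(T) = g(T)^*$ (as $g$ is real-valued, $g(T)$ is self-adjoint by the $*$-homomorphism property of Theorem \ref{thm:July30e1}), this equals $\langle g(T)y, x\rangle = \int g\,d\mu_{y,x}$; one more appeal to uniqueness finishes it. A subtlety here: one must note that $\mathscr{C}(\Omega_j^+,\RR)$ is closed under the operations used (it is, being a function space), and that taking complex/quaternionic conjugates of a measure is well-defined componentwise.

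For (iii), the estimate on the total mass, I would argue from the polarization formula directly. From \eqref{eq:Sept22y4} with $g \equiv 1$ and item (i) of Theorem \ref{thm:July30e1} ($1(T) = I_\cH$) we get $\mu_x(\sigma_S(T)\cap\CC_j^+) = \langle x, x\rangle = \|x\|^2$, so each positive measure $\mu_x$ has total mass $\|x\|^2$. Evaluating \eqref{eq:Nov29u2} on the whole space and using the parallelogram-type cancellations, I would bound $|\mu_{x,y}(\sigma_S(T)\cap\CC_j^+)|$; the cleanest route, however, is probably the standard one: using (i), (ii), and (iv) just established, the map $(x,y)\mapsto \mu_{x,y}(\sigma_S(T)\cap\CC_j^+)$ is a bounded quaternionic sesquilinear form, and in fact equals $\langle I_\cH x, y\rangle = \langle x,y\rangle$ (take $g\equiv 1$ in \eqref{eq:Sept18i1}), so (iii) reduces to the Cauchy--Schwarz inequality $|\langle x,y\rangle| \le \|x\|\|y\|$ in the quaternionic Hilbert space $\cH$. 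I expect (iii) to be the least routine point, since one must be slightly careful that $\mu_{x,y}$ evaluated on the \emph{full} space $\sigma_S(T)\cap\CC_j^+$ indeed equals $\langle x, y\rangle$ rather than merely agreeing with it after integration against constants; but since constants lie in $\mathscr{C}(\Omega_j^+,\RR)$, that identification is immediate, and the rest is Cauchy--Schwarz. Everything else is bookkeeping with the inner-product axioms and the $*$-homomorphism property.
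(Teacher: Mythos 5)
Your proof is correct and follows essentially the same route as the paper's (very terse) argument: properties (i)--(iii) are obtained from the integral characterization \eqref{eq:Sept18i1}, the inner-product axioms, and the uniqueness of the representing measure, with (iii) reducing to the quaternionic Cauchy--Schwarz inequality after taking $g \equiv 1$. The only cosmetic difference is (iv), which the paper deduces from (i) and (ii) whereas you argue directly from the self-adjointness of $g(T)$ for real-valued $g$; both are valid.
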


\begin{proof}
Properties (i)-(iii) are easily obtained from \eqref{eq:Sept18i1} using the uniqueness of $\mu_{x,y}$ (relative to a fixed $j \in \mathbb{S}$) and the properties of $\langle \cdot, \cdot \rangle$. Property (iv) follows from properties (i) and (ii).
\end{proof}

It follows from Properties (i)-(iii) in Lemma \ref{lem:Sept12yry1} that $\Phi(x) = \mu_{x,y}(\sigma)$, where $y \in \mathcal{H}$ and $\sigma \in \mathfrak{B}(\sigma_S(T) \cap  \CC_j^+)$ are fixed,
is a continuous right linear functional on $\cH$. It follows from an analog of the Riesz representation theorem for Hilbert spaces (see Theorem 6.1 in \cite{AlpayShapiro}) that corresponding to any $x \in \cH$, there exists a unique vector $w \in \cH$ such that
$$\Phi(x) = \langle x, w \rangle,$$
i.e., $\mu_{x,y}(\sigma) = \langle x, w \rangle$. Using (i) and (ii) in Theorem \ref{lem:Sept12yry1}, we get the existence of an operator $E \in \cB(\cH)$ so that $w = E(\sigma)^* y$. Thus,
\begin{equation}
\label{eq:Sept12urr1}
\mu_{x,y}(\sigma) = \langle E(\sigma)x, y \rangle, \quad \sigma \in \mathfrak{B}(\sigma_S(T)  \cap  \CC_j^+).
\end{equation}
In view of \eqref{eq:Sept18i1} and \eqref{eq:Sept12urr1}, we may write
\begin{equation}
\label{eq:Sept18yb1}
g(T) = \int_{\sigma_S(T) \cap  \CC_j^+} g(p) dE(p), \quad g \in \mathscr{C}(\Omega_j^+, \RR).
\end{equation}

\begin{thm}
\label{thm:August25df1}
The $\cB(\cH)$-valued measure $E$, given by \eqref{eq:Sept18yb1}, enjoys the following properties{\rm :}
\begin{enumerate}
\item[(i)] $\| E(\sigma) \| \leq 1$.
\item[(ii)]  $E (\emptyset) = 0$ and $E(\sigma_S(T) \cap  \CC_j^+) = I_{\mathcal{H}}$.
\item[(iii)] If $\sigma \cap \tau = \emptyset$, then $E(\sigma \cup \tau) = E(\sigma) + E(\tau)$.
\item[(iv)] $E(\sigma \cap \tau) = E(\sigma)E(\tau)$.
\item[(v)] $E(\sigma)^* = E(\sigma)$.
\item[(vi)] $E(\sigma)^2 = E(\sigma)$.
\item[(vii)] $E(\sigma)$ commutes with $f(T)$ for all $f \in \mathscr{C}(\sigma_S(T) \cap  \CC_j^+, \CC_j)$.
\item[(viii)] $E(\sigma)$ and $E(\tau)$ commute for all $\sigma, \tau \in \mathfrak{B}(\sigma_S(T) \cap \CC_j^+)$.
\end{enumerate}
\end{thm}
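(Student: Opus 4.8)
The plan is to transfer, via the identity $\langle E(\sigma)x, y\rangle = \mu_{x,y}(\sigma)$ of \eqref{eq:Sept12urr1} and the fact that $g \mapsto g(T)$ is an algebra $*$-homomorphism, the relevant properties of the quaternion-valued measures $\mu_{x,y}$ of \eqref{eq:Nov29u2} to the operators $E(\sigma)$. Items (i)--(iii) and (v) are quickly disposed of. For (i), for each fixed $\sigma$ the form $(x,y) \mapsto \mu_{x,y}(\sigma)$ is $\HH$-sesquilinear, Hermitian (Lemma \ref{lem:Sept12yry1}(iv)) and positive semidefinite (its diagonal is the positive measure $\mu_x$ evaluated at $\sigma$, with $\mu_x(\sigma) \le \mu_x(\Omega_j^+) = \|x\|^{2}$), so Cauchy--Schwarz gives $|\langle E(\sigma)x, y\rangle| \le \|x\|\,\|y\|$, and $y = E(\sigma)x$ yields $\|E(\sigma)\| \le 1$. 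For (ii), $\mu_{x,y}(\emptyset) = 0$ gives $E(\emptyset) = 0$, while evaluating \eqref{eq:Sept18i1} at the constant function $1$ on $\Omega_j^+$, whose slice extension is $\chi_{\sigma_S(T)}$ so that $1(T) = I_\cH$ by Theorem \ref{thm:July30e1}(i), gives $\langle E(\sigma_S(T) \cap \CC_j^+)x, y\rangle = \langle x, y\rangle$. Item (iii) is the finite additivity of each $\mu_{x,y}$. Item (v) follows from $\overline{\mu_{x,y}} = \mu_{y,x}$ of Lemma \ref{lem:Sept12yry1}(iv), which reads $\langle y, E(\sigma)x\rangle = \langle E(\sigma)y, x\rangle = \langle y, E(\sigma)^{*}x\rangle$ for all $x,y \in \cH$. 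Once (iv) is proved, (vi) is its case $\tau = \sigma$ and (viii) is $E(\sigma)E(\tau) = E(\sigma \cap \tau) = E(\tau)E(\sigma)$.

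The heart of the argument is (iv). I would first establish the identity
\[
\langle E(\sigma)\,g(T)x,\,y\rangle \;=\; \int_{\sigma} g(p)\, d\mu_{x,y}(p),\qquad g \in \mathscr{C}(\Omega_j^+,\RR),\ \sigma \in \mathfrak{B}(\sigma_S(T) \cap \CC_j^+).
\]
To prove it, note that since $\Psi_{\RR,T}$ is multiplicative and the slice extension of a product of real continuous functions on $\Omega_j^+$ is the product of the slice extensions, one has $(fg)(T) = f(T)g(T)$ for $f,g \in \mathscr{C}(\Omega_j^+,\RR)$; hence $\int_{\Omega_j^+} f\, d\mu_{g(T)x,y} = \langle f(T)g(T)x, y\rangle = \langle (fg)(T)x, y\rangle = \int_{\Omega_j^+} fg\, d\mu_{x,y}$ for every such $f$. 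Splitting the $\HH$-valued measures into their four real components and invoking the uniqueness part of the Riesz representation theorem --- via Jordan decomposition this reduces to the uniqueness for positive functionals in Theorem \ref{thm:July29u1} --- gives $d\mu_{g(T)x,y} = g\, d\mu_{x,y}$, and evaluating on $\sigma$ gives the displayed identity since $\mu_{g(T)x,y}(\sigma) = \langle E(\sigma)g(T)x, y\rangle$. Now fix $\sigma$ and $x,y$ and compare the two finite $\HH$-valued Borel measures $\tau \mapsto \langle E(\sigma)E(\tau)x, y\rangle = \mu_{x,E(\sigma)y}(\tau)$ (using (v)) and $\tau \mapsto \langle E(\sigma \cap \tau)x, y\rangle = \mu_{x,y}(\sigma \cap \tau)$; by \eqref{eq:Sept18i1}, (v) and the displayed identity, each integrates an arbitrary $g \in \mathscr{C}(\Omega_j^+,\RR)$ to $\int_\sigma g\, d\mu_{x,y}$, so the same uniqueness forces them to coincide, i.e.\ $E(\sigma)E(\tau) = E(\sigma \cap \tau)$.

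For (vii), the case of a real-valued $f = g$ follows from the displayed identity together with self-adjointness of $g(T)$ (Lemma \ref{lem:Sept21uuz1}): on one hand $\langle E(\sigma)g(T)x, y\rangle = \int_\sigma g\, d\mu_{x,y}$, and on the other $\langle g(T)E(\sigma)x, y\rangle = \langle E(\sigma)x, g(T)y\rangle = \mu_{x,g(T)y}(\sigma) = \int_\sigma g\, d\mu_{x,y}$, where the last equality uses $\mu_{x,g(T)y} = \overline{\mu_{g(T)y,x}} = g\,\overline{\mu_{y,x}} = g\,\mu_{x,y}$ by Lemma \ref{lem:Sept12yry1}(iv) and the displayed identity. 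Hence $E(\sigma)$ commutes with $g(T)$ for real $g$. For a general $f \in \mathscr{C}(\sigma_S(T) \cap \CC_j^+, \CC_j)$, write $f(T) = f_0(T) + J f_1(T)$ as in Remark \ref{rem:Dec9t1}, where $f_0(T)$ and $f_1(T)$ are norm-limits of real polynomials in $A$ and $B$; since $A$ and $B$ are the values at $T$, under \eqref{eq:Sept18yb1}, of the real continuous functions $p \mapsto \re p$ and $p \mapsto |\im p|$ on $\Omega_j^+$, they commute with $E(\sigma)$ by the real case, hence so do $f_0(T)$ and $f_1(T)$. It remains to check that $J$ commutes with $E(\sigma)$. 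Fixing $x \in \cH$, choose $g_n \in \mathscr{C}(\Omega_j^+,\RR)$ with $|g_n| \le 1$ and $g_n \to \chi_\sigma$ almost everywhere for the finite positive measure $\mu_x + \mu_{Jx}$; using the functional-calculus relations together with (v), (vi) and the real case of the displayed identity one computes $\|g_n(T)z - E(\sigma)z\|^{2} = \int_{\Omega_j^+} (g_n - \chi_\sigma)^{2}\, d\mu_z$ for $z \in \{x, Jx\}$, so $g_n(T)x \to E(\sigma)x$ and $g_n(T)Jx \to E(\sigma)Jx$ in $\cH$. As $J$ commutes with each $g_n(T)$ (a limit of polynomials in $A, B$, and $J$ commutes with $A$ and $B$ by Theorem \ref{thm:Sept1ub1}(iv)), letting $n \to \infty$ gives $J E(\sigma)x = E(\sigma)Jx$.

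The step I expect to be the main obstacle is (iv), and within it the bookkeeping forced by the $\HH$-sesquilinearity of $\langle \cdot,\cdot\rangle$: one must track on which side scalars act in the polarization formula \eqref{eq:Nov29u2} and be sure that the uniqueness in the Riesz representation theorem survives the passage from positive real measures to $\HH$-valued ones. A closely related difficulty, already apparent above, is that the spectral projections $E(\sigma)$ do not lie in the range of the continuous functional calculus, so the commutation statements --- notably $J E(\sigma) = E(\sigma)J$, needed for (vii) --- cannot be obtained purely algebraically and require the strong-operator approximation of $E(\sigma)$ by operators $g_n(T)$ indicated above.
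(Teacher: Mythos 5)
Your proposal is correct and follows the same overall strategy as the paper --- transferring the properties of the measures $\mu_{x,y}$ to $E$ via \eqref{eq:Sept12urr1} --- but it diverges from the paper's proof at the two places where real work is needed, and in both it is more explicit. For the multiplicativity (iv), the paper merely asserts that it ``follows easily from Property (i) of Lemma \ref{lem:Sept12yry1}'' (a citation that, as stated, points to sesquilinearity and does not by itself yield multiplicativity); you supply the standard argument, namely the identity $d\mu_{g(T)x,y}=g\,d\mu_{x,y}$ obtained from the multiplicativity of $\Psi_{\RR,T}$ together with the uniqueness part of the Riesz theorem, followed by a comparison of the two measures $\tau\mapsto\langle E(\sigma)E(\tau)x,y\rangle$ and $\tau\mapsto\mu_{x,y}(\sigma\cap\tau)$; this is evidently what the paper intends, and your version is the complete one. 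For (vii), the paper reduces to showing that $E(\sigma)$ commutes with $A$, $B$ and $J$ separately, and does so purely through adjoint/measure identities: for instance $\langle f(T)Ax,y\rangle=\langle f(T)x,Ay\rangle$ gives $\mu_{Ax,y}=\mu_{x,Ay}$ and hence $\langle E(\sigma)Ax,y\rangle=\langle AE(\sigma)x,y\rangle$, and the same device handles $J$ via anti self-adjointness and Theorem \ref{thm:July30e1}(iii), which yield $\mu_{Jx,y}=-\mu_{x,Jy}$ and then $JE(\sigma)=E(\sigma)J$. So your remark that the commutation with $J$ ``cannot be obtained purely algebraically'' is somewhat overstated. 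Your alternative --- strong-operator approximation of $E(\sigma)$ by $g_n(T)$ with $g_n\to\chi_\sigma$ in $L^2(\mu_x+\mu_{Jx})$ --- is also valid and has the merit of being reusable when one later extends to bounded Borel functions, but it costs a Lusin-type approximation that the paper's route avoids. Finally, your Cauchy--Schwarz derivation of (i) is actually more complete than the paper's, since Lemma \ref{lem:Sept12yry1}(iii) as stated only bounds $\mu_{x,y}$ on the full set $\sigma_S(T)\cap\CC_j^+$ rather than on an arbitrary Borel set $\sigma$.
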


\begin{proof}
Property (i) follows directly from property (iii) in Lemma \ref{lem:Sept12yry1}. Since $\mu_{x,y}(\emptyset) = 0$, we may use \eqref{eq:Sept12urr1} to deduce $E(\emptyset) = 0$. Similarly, putting $g(p) = 1$ in \eqref{eq:Sept18yb1} yields $g(T) = I_{\mathcal{H}}$ for all $x, y \in \cH$ and thus
$$
\langle x, y \rangle = \int_{\sigma_S(T) \, \cap \, \CC_j^+ } d\mu_{x,y} = \langle E(\sigma_S(T) \cap  \CC_j^+)x, y \rangle,
$$
i.e., $E(\sigma_S(T) \cap \CC_j^+ ) = I_{\mathcal{H}}$. Property (iv) follows easily from Property (i) of Lemma \ref{lem:Sept12yry1}. Property (v) follows easily from property (iv) in Lemma \ref{lem:Sept12yry1}.
Property (vi) can be obtained from Property (iv) when $\sigma = \tau$. In view of the fact that $\sigma \cap \tau = \tau \cap \sigma$, Property (viii) can be obtained from Property (iv).

We will now show that Property (vii) holds. It follows from Theorem \ref{thm:Sept1ub1} that $T = A + J B$, where $A \in \cB(\cH)$ is self-adjoint, $B \in \cB(\cH)$ is positive and $J \in \cB(\cH)$ is anti self-adjoint and unitary. Moreover, $A$, $B$ and $J$ all mutually commute. It follows from item (iii) in Theorem \ref{thm:July30e1} that $f(T)$ commutes with $J$, and moreover, by the construction given in Remark \ref{rem:Dec9t1}, $f(T)$ commutes with $A$ and $B$ for $f \in \cS_{\RR} (\sigma_S(T), \CC_j)$. In view of the identifications made in Lemma \ref{lem:Nov29q1}, we also have that $f(T)$ commutes with $A$ and $B$ for all $f \in \mathscr{C}(\sigma_S(T)\cap \CC_j^+, \CC_j)$, where $\mathscr{C}(\sigma_S(T) \cap \CC_j^+, \CC_j)$ denotes the set of $\CC_j$-valued continuous functions on $\sigma_S(T) \cap \CC_j^+$. To verify that $E(\sigma)$ commutes with $A$, note that
$$\langle f(T) Ax, y \rangle = \langle f(T) x, Ay \rangle \quad {\rm for} \quad x, y \in \cH.$$
Thus, in view of \eqref{eq:Sept22wu1} and \eqref{eq:Sept12urr1}, we have
$$\mu_{Ax, y} = \mu_{x, Ay}.$$
Consequently,
$$\langle E(\sigma) A x, y \rangle = \langle E(\sigma) x, Ay \rangle = \langle A E(\sigma) x, y \rangle.$$
Thus, $E(\sigma)$ and $A$ commute. In a similar fashion, one can show that $E(\sigma)$ and $B$ commute and also that $E(\sigma)$ and $J$ commute. Therefore, in view of \eqref{eq:Oct19j1}, \eqref{eq:Oct19j2} and \eqref{eq:Dec9rr2}, we have that $E(\sigma)$ and $f(T)$ commute.
\end{proof}

\begin{defn}
\label{def:August25uu1}
A $\cB(\cH)$-valued measure $E$ on $\sigma_S(T) \cap  \CC_j^+$ will be called a {\it spectral measure} if $E$ has Properties (i)-(viii) in Theorem \ref{thm:August25df1}. Note that $E(\sigma)$ is a positive operator for all $\sigma \in \mathfrak{B}(\sigma_S(T) \cap  \CC_j^+)$.
\end{defn}

\begin{rem}
\label{rem:Nov27w1}
Fix $j \in \mathbb{S}$ and let $\cN_j$ be a Hilbert basis of $\cH$ so that when $J$ is written with respect to $\cN_j$, we have $J = L_j$ (such a basis exists by item (v) of Lemma \ref{thm:Sept1ub1}). In Theorem \ref{thm:August25j1} we shall assume that $T$ is written with respect to the Hilbert basis $\cN_j$.
\end{rem}

In what follows we shall let $\mathscr{C}(\sigma_S(T) \cap  \CC_j^+, \CC_j)$ denote the set of $\CC_j$-valued continuous functions on $\sigma_S(T) \cap  \CC_j^+$. We are now ready to state and prove the main result of the section.

\begin{thm}
\label{thm:August25j1}
Let $T \in \cB(\cH)$ be normal and fix $j \in \mathbb{S}$. If $T$ is written with respect to the Hilbert basis $\cN_j$ {\rm (}see Remark \ref{rem:Nov27w1}{\rm )}, then there exists a unique spectral measure $E$ so that
\begin{equation}
\label{eq:Sept22wt1}
f(T) = \int_{\sigma_S(T) \, \cap \, \CC_j^+} f(p) \; dE(p),   \quad  f \in \mathscr{C}(\sigma_S(T) \cap  \CC_j^+, \CC_j).
\end{equation}
Moreover, $W \in \cB(\cH)$  commutes with $A$, $B$ and $J$, which appear in the decomposition $T= A + B J$ {\rm (}see Theorem \ref{thm:Sept1ub1}{\rm )}, if and only if $W$ commutes with $E(\sigma)$, $\sigma \in \mathfrak{B}(\sigma_S(T) \cap  \CC_j^+)$.
\end{thm}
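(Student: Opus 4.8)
The theorem has two parts: existence and uniqueness of a spectral measure $E$ satisfying the integral representation \eqref{eq:Sept22wt1} for all $\CC_j$-valued continuous functions, and the characterization of operators commuting with $A$, $B$, $J$ in terms of commuting with $E$. For existence, the bulk of the work is already done: the measure $E$ constructed before Theorem \ref{thm:August25df1} (via the Riesz representation theorem for $\cH$ applied to the functionals $x \mapsto \mu_{x,y}(\sigma)$) satisfies \eqref{eq:Sept18yb1}, i.e.\ the representation holds for real-valued continuous $g$ on $\Omega_j^+$, and by Theorem \ref{thm:August25df1} it is a spectral measure in the sense of Definition \ref{def:August25uu1}. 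The remaining step for existence is to upgrade \eqref{eq:Sept18yb1} from real-valued $g$ to $\CC_j$-valued $f$. First I would write $f = f_0 + f_1 j$ with $f_0, f_1 \in \mathscr{C}(\sigma_S(T) \cap \CC_j^+, \RR)$; by linearity of both sides it suffices to check the representation for the constant function $j$, i.e.\ that $\int_{\sigma_S(T)\cap\CC_j^+} j \, dE(p) = L_j = J$ (after identifying via Lemma \ref{lem:Nov29q1} the purely imaginary intrinsic slice function corresponding to the constant $1$ on $\Omega_j^+$ with $J$ under $\Psi_{\RR,T}$). This should follow because under the $*$-homomorphism $\Psi_{\RR,T}$ the intrinsic slice function whose restriction to $\Omega_j^+$ is $1$ maps to $J$, together with \eqref{eq:Dec9rr2} and the fact that $E$ is built from the functionals governing $f_0(T)$; care is needed that $E(\sigma) j = j E(\sigma)$ only in the sense that $J$ commutes with $E(\sigma)$, which is Property (vii)/(viii) combined with $J = L_j$.

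\textbf{Uniqueness.} Suppose $E'$ is another spectral measure satisfying \eqref{eq:Sept22wt1}. Restricting to real-valued $g \in \mathscr{C}(\Omega_j^+, \RR)$ gives $\langle g(T)x,y\rangle = \int g \, d\mu'_{x,y}$ where $\mu'_{x,y}(\sigma) = \langle E'(\sigma)x,y\rangle$. Taking $y = x$ yields a positive functional on $\mathscr{C}(\Omega_j^+,\RR)$ represented by the positive measure $\sigma \mapsto \langle E'(\sigma)x,x\rangle$; but the measure representing $\ell_x$ is unique by Theorem \ref{thm:July29u1}, so $\langle E'(\sigma)x,x\rangle = \mu_x(\sigma) = \langle E(\sigma)x,x\rangle$ for all $x$, and polarization (formula \eqref{eq:Sept24yy1}) forces $E' = E$ on all Borel sets. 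This is routine.

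\textbf{The commutant characterization.} For the "if and only if": if $W$ commutes with every $E(\sigma)$, then $W$ commutes with $\int g\,dE = g(T)$ for real $g$, hence with $f_0(T)$, $f_1(T)$, and with $J = \int j\,dE$; taking $g = \mathrm{id}$ (the coordinate functions picking out $A = f_0(T)$ for $f_0(p) = \re p$ and $B$ via the appropriate real slice function) shows $W$ commutes with $A$ and $B$. Conversely, if $W$ commutes with $A$, $B$, $J$, then $W$ commutes with every polynomial $\phi_n(A,B)$, hence with $f_0(T)$ and $f_1(T)$ by the strong limits \eqref{eq:Oct19j1}--\eqref{eq:Oct19j2}, so $\langle W g(T) x, y\rangle = \langle g(T) W x, y \rangle$ for real $g$; unravelling through $\mu_{x,y}(\sigma) = \langle E(\sigma)x,y\rangle$ as in the proof of Property (vii) gives $\mu_{Wx,y} = \mu_{x, W^*y}$ and therefore $W E(\sigma) = E(\sigma) W$. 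I expect the main obstacle to be the existence step --- specifically verifying cleanly that the $\CC_j$-valued representation reduces to the identity $\int j\, dE = J$ and reconciling the bookkeeping between the slice-function picture of Theorem \ref{thm:July30e1}/Lemma \ref{lem:Nov29q1} and the measure-theoretic picture built on $\Omega_j^+$ (in particular, that $J$ acting as right multiplication by $j$ on the basis $\cN_j$ is exactly what the spectral integral of the constant $j$ produces). Everything else is a standard polarization-plus-density argument.
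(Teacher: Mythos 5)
Your proposal follows the same route as the paper: existence by splitting $f = f_0 + f_1 j$, invoking \eqref{eq:Sept18yb1} for the real parts and $J = L_j$ to absorb the $j$ (the paper writes $f(T) = f_0(T) + jf_1(T)$ and integrates each piece rather than isolating $\int j\,dE = J$, but this is the same computation); uniqueness by Riesz-representation uniqueness on the diagonal functionals $\ell_x$ followed by polarization \eqref{eq:Sept24yy1}; and the commutant equivalence via the functions $u$, $v$, $j$ in one direction and $\mu_{Wx,y} = \mu_{x,W^*y}$ in the other. The argument is correct at the same level of detail as the paper's.
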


\begin{proof}
If $f \in \mathscr{C}(\sigma_S(T) \cap  \CC_j^+, \CC_j)$, then $f = f_0 + j f_1$, where $f_0$ and $f_1$ both belong to $\mathscr{C}(\sigma_S(T) \cap \CC_j^+, \RR)$. In view of Lemma \ref{lem:Nov29q1}, $f$ can be identified with a function in $\cS_{\RR}(\sigma_S(T),\CC_j)$ (with a slight abuse of notation we will use $f$ for the function in $\cS_{\RR}(\sigma_S(T), \CC_j)$). Next, we have already observed in \eqref{eq:Dec9rr2} that
$$f(T) = f_0(T) + J f_1(T).$$
We will assume that $f(T)$ is written with respect to the Hilbert basis $\cN_j$ of $\cH$ so that $J = L_j$. Thus,
$$f(T) = f_0(T) + j f_1(T).$$
Thus, we can use \eqref{eq:Sept18yb1} on $f_0(T)$ and $f_1(T)$ to obtain a spectral measure $E$ on $\mathfrak{B}(\sigma_S(T) \cap \CC_j^+)$ so that
$$f_0(T) = \int_{\sigma_S(T) \, \cap \, \CC_j^+} f_0(p) dE(p),$$
$$f_1(T) = \int_{\sigma_S(T) \, \cap \, \CC_j^+} f_1(p) dE(p)$$
and, finally,
\begin{equation}
\label{eq:Sept22wu1}
f(T) = \int_{\sigma_S(T) \, \cap \, \CC_j^+} f(p) dE(p), \quad f \in \mathscr{C}(\sigma_S(T) \cap  \CC_j^+, \CC_j).
\end{equation}

To see that the spectral measure $E$ is unique for a fixed $j \in \mathbb{S}$, suppose not, i.e., there exists another spectral measure $\widetilde{E}$ on $\sigma_S(T) \cap \CC_j^+$ with $\tilde{\mu}_{x,y}(\sigma) = \langle \widetilde{E}(\sigma) x, y \rangle$, so that
$$f(T) = \int_{\sigma_S(T) \, \cap \, \CC_j^+} f(p) \, dE(p) = \int_{\sigma_S(T) \, \cap \, \CC_j^+} f(p) \, d\widetilde{E}(p)$$
for $f \in \mathscr{C}(\sigma_S(T) \cap  \CC_j^+, \CC_j)$. If we let
\begin{align*}
\Lambda(f) =& \;  \langle f(T)x, x \rangle, \quad f \in \mathscr{C}(\sigma_S(T) \cap \CC^+_j, \CC_j), \\
=& \; \int_{\sigma_S(T) \, \cap \, \CC_j^+} f(p) \, d\mu_{x,x}(p),
\end{align*}
then it is readily checked that $\Lambda$ is a positive bounded $\CC_j$-linear functional on $\mathscr{C}(\sigma_S(T) \cap  \CC_j^+, \CC_j)$. It follows from the uniqueness assertion in the Riesz representation for complex-valued linear functionals, see, e.g., Theorem 6.19 in \cite{Rudin}, that  $\mu_{x,x} = \tilde{\mu}_{x,x}$. Using the polarization formula \eqref{eq:Sept24yy1} we have
$$\mu_{x,y} = \tilde{\mu}_{x,y}, \quad x, y \in \cH,$$
and hence $E(\sigma) = \widetilde{E}(\sigma)$ for $\sigma \in \mathfrak{B}(\sigma_S(T) \cap  \CC_j^+)$.

We will now prove the last assertion. If $W \in \cB(\cH)$ commutes with $A$, $B$ and $J$, then $W$ commutes with $\psi(T)$ for any $\psi \in \mathscr{C}(\sigma_S(T)  \cap  \CC_j^+, \RR)$. In view of \eqref{eq:Sept22wt1},
$$\langle \psi(T) Wx, y \rangle = \int_{\sigma_S(T) \, \cap \, \CC_j^+} \psi(p) d\langle E(p) Wx, y \rangle$$
and
$$\langle  \psi(T)x, W^* y \rangle = \int_{\sigma_S(T) \, \cap \, \CC_j^+} \psi(p) d\langle E(p) x, W^* y \rangle.$$
Thus, as
$$\langle \psi(T) Wx, y\rangle = \langle \psi(T) x, W^* y\rangle$$
we have
$$\langle E(\sigma) x, W^* y \rangle = \langle E(\sigma) Wx, y \rangle$$
and, consequently,
$$\langle W E(\sigma)x, y \rangle =\langle E(\sigma) Wx, y \rangle.$$
Therefore, $W E(\sigma) = E(\sigma) W$ for all $\sigma \in \mathfrak{B}(\sigma_S(T)  \cap  \CC_j^+)$.

Conversely, suppose $W \in \cB(\cH)$ and $E(\sigma)$ commute for all $\sigma \in \mathfrak{B}(\sigma_S(T)  \cap  \CC_j^+)$. If $f(u+vj) = u$, $g(u+vj) = v$ and $h(u+vj) = j$, then $f$, $g$ and $h$ belong to $\mathscr{C}(\sigma_S(T)\cap \CC_j^+, \CC_j)$ and $f(T) = A$, $g(T) = B$ and $h(T) = J$. Consequently, using \eqref{eq:Sept22wt1}, we have
\begin{align*}
\langle AWx, y \rangle =& \; \int_{\sigma_S(T) \, \cap \, \CC_j^+} u \, d \langle E(p)Wx, y \rangle \\
=& \; \int_{\sigma_S(T) \, \cap \, \CC_j^+} u \, d \langle E(p)x W^*y \rangle \\
=& \; \langle Ax, W^* y \rangle \\
=& \; \langle WA x, y \rangle, \quad x, y \in \cH.
\end{align*}
Thus, $W$ and $A$ commute. The proof that $W$ commutes with $B$ and $J$ is completed in a similar fashion.
\end{proof}

\begin{cor}
\label{cor:Nov30ye1}
In the setting of Theorem {\rm \ref{thm:August25j1}}, the following statements hold{\rm :}
\begin{enumerate}
\item[(i)] If $T \in \cB(\cH)$ is a positive operator, then there exists a unique positive operator $T^{1/2} := W \in \cB(\cH)$ so that $W^2 = T$.
\item[(ii)] $T \in \cB(\cH)$ is self-adjoint if and only if
\begin{equation}
\label{eq:Nov30k1}
T = \int_{[-\| T \|, \| T \|]} t \, dE(t).
\end{equation}
\item[(iii)] $T \in \cB(\cH)$ is anti self-adjoint if and only if
\begin{equation}
\label{eq:Nov30h2}
T = \int_{[0, \| T \|]} j t \, dE(t).
\end{equation}
\item[(iv)] $T \in \cB(\cH)$ is unitary if and only if
\begin{equation}
\label{eq:Nov30hkh2}
T = \int_{[0, \pi]} e^{j t}  \, dE(t).
\end{equation}
\end{enumerate}
\end{cor}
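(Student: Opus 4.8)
The plan is to read off each item from the bounded spectral theorem (Theorem \ref{thm:August25j1}) combined with the localization of the $S$-spectrum in Theorem \ref{thm:Sept15yz1}, exploiting throughout that the functional calculus $\Psi_{\RR,T}$ of Theorem \ref{thm:July30e1} is a unital $*$-homomorphism with $\Psi_{\RR,T}({\rm id}) = T$ and $\Psi_{\RR,T}(\chi_{\sigma_S(T)}) = I_{\cH}$.

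\textbf{The representations (ii)--(iv), forward direction.} Suppose $T$ is self-adjoint (resp.\ anti self-adjoint, resp.\ unitary). By Theorem \ref{thm:Sept15yz1} the set $\sigma_S(T)$ lies in $[-\| T\|, \|T\|]$ (resp.\ in $\{p : \re p = 0,\ |p|\le \|T\|\}$, resp.\ on the unit sphere of $\HH$), so $\sigma_S(T)\cap\CC_j^+$ is a subset of the real segment $[-\|T\|,\|T\|]$ (resp.\ of the ray $\{jt : 0\le t\le \|T\|\}$, resp.\ of the upper unit semicircle $\{e^{jt} : 0\le t\le\pi\}$). Applying Theorem \ref{thm:August25j1} with $f = {\rm id}$ gives $T = \int_{\sigma_S(T)\cap\CC_j^+} p\, dE(p)$, and after transporting $E$ to a measure in the real parameter $t$ along $t\mapsto t$, $t\mapsto jt$, $t\mapsto e^{jt}$ respectively (and extending it by zero over the remainder of the stated interval) we obtain \eqref{eq:Nov30k1}, \eqref{eq:Nov30h2}, \eqref{eq:Nov30hkh2}.

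\textbf{The representations (ii)--(iv), converse direction.} Conversely, each of these representations forces $\sigma_S(T)\cap\CC_j^+$, hence by axial symmetry (Remark \ref{rem:Sept15pp1}) all of $\sigma_S(T)$, into the corresponding curve, on which the inclusion map ${\rm id}$ satisfies $\bar p = p$, $\bar p = -p$, or $p\,\bar p = \bar p\, p = 1$, respectively. Since $\Psi_{\RR,T}$ commutes with $*$ we have $T^* = \Psi_{\RR,T}(\overline{{\rm id}})$, where $\overline{{\rm id}}(p) = \bar p$; thus in the first two cases $T^* = \Psi_{\RR,T}(\pm{\rm id}) = \pm T$, while in the unitary case $TT^* = \Psi_{\RR,T}({\rm id}\cdot\overline{{\rm id}}) = \Psi_{\RR,T}(\chi_{\sigma_S(T)}) = I_{\cH}$ and similarly $T^*T = I_{\cH}$. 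This yields self-adjointness, anti self-adjointness and unitarity, respectively.

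\textbf{Part (i).} For existence, $T$ positive gives $\sigma_S(T)\subseteq[0,\|T\|]$ by Theorem \ref{thm:Sept15yz1}(i), so $f(p) := p^{1/2}$ is a real-valued element of $\cS_{\RR}(\sigma_S(T),\HH)$; set $W := \Psi_{\RR,T}(f)$. Then $W^* = \Psi_{\RR,T}(\bar f) = \Psi_{\RR,T}(f) = W$, $W^2 = \Psi_{\RR,T}(f^2) = \Psi_{\RR,T}({\rm id}) = T$, and $\langle Wx,x\rangle = \| \Psi_{\RR,T}(p^{1/4})x\|^2\ge 0$, so $W$ is a positive square root of $T$. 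For uniqueness, let $V\in\cB(\cH)$ be positive with $V^2 = T$; then $V$ commutes with $T$, hence with every real polynomial in $T$, hence---approximating $p^{1/2}$ uniformly on $[0,\|T\|]$ by polynomials and using the isometry in Theorem \ref{thm:July30e1}(iv)---with $W$. Put $A_0 := W - V$, which is self-adjoint and commutes with $W$ and $V$, so $A_0(W+V) = W^2 - V^2 = 0$ and therefore $A_0^2 W + A_0^2 V = 0$; for every $x$ this gives $0 = \langle A_0^2 W x, x\rangle + \langle A_0^2 V x, x\rangle = \langle W A_0 x, A_0 x\rangle + \langle V A_0 x, A_0 x\rangle$, a sum of nonnegative terms. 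Each therefore vanishes, which (by the existence part applied to $W$ and to $V$) forces $W A_0 x = V A_0 x = 0$, so $A_0^2 x = 0$ for all $x$ and finally $\| A_0 x\|^2 = \langle A_0^2 x, x\rangle = 0$, i.e.\ $V = W$.

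\textbf{Expected obstacle.} Most of the argument is organizational: assembling Theorems \ref{thm:Sept15yz1}, \ref{thm:August25j1}, \ref{thm:July30e1} and correctly reparametrizing the spectral measure from $\sigma_S(T)\cap\CC_j^+$ onto the real intervals appearing in (ii)--(iv). The only step with genuine analytic content is the uniqueness of the positive square root in (i), which rests on the fact that a positive operator kills any vector on which its quadratic form vanishes---itself a consequence of the existence statement, so existence must be established before uniqueness.
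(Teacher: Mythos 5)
Your proposal is correct, and on items (ii)--(iv) and the existence half of (i) it follows the same route as the paper: localize $\sigma_S(T)$ via Theorem \ref{thm:Sept15yz1}, apply Theorem \ref{thm:August25j1} with $f={\rm id}$ (resp.\ $f(p)=p^{1/2}$), and read off the converse from $f(T)^*=\bar f(T)$. The paper compresses all of (ii)--(iv) into one sentence (``follow readily from Theorem \ref{thm:August25j1} and \eqref{eq:Dec5tt1}''), so your explicit converse argument via the $*$-homomorphism property --- $T^*=\Psi_{\RR,T}(\overline{{\rm id}})=\pm T$, and $TT^*=\Psi_{\RR,T}({\rm id}\cdot\overline{{\rm id}})=I_{\cH}$ in the unitary case --- is simply the detail the authors leave implicit. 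The one genuine divergence is the uniqueness of the positive square root: the paper disposes of it by appealing to the uniqueness of the spectral measure $E$ (``just as in the complex case''), whereas you give the classical elementary commutation argument ($V$ commutes with $T$, hence with $W$ by polynomial approximation and Theorem \ref{thm:July30e1}(iv); then $A_0=W-V$ satisfies $A_0(W+V)=0$, and positivity of $W$ and $V$ kills $A_0$). Your version is more self-contained and avoids having to justify why a second positive root would generate the same spectral measure; the paper's version is shorter but leans on a transfer from the complex theory. Both are valid; your ordering remark --- that existence must precede uniqueness because the argument uses a square root of $W$ and of $V$ --- is a correct and worthwhile observation that the paper does not make.
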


\begin{proof}
As we have observed in item (i) of Theorem \ref{thm:Sept15yz1}, if $T \in \cB(\cH)$ is a positive operator, then $\sigma_S(T) \subseteq [0, \| T \|]$. Thus, using Theorem \ref{thm:August25j1} we have the existence of a uniquely determined spectral measure $E$ so that
\begin{equation}
\label{eq:Dec1yuu2}
T = \int_{[0, \| T \|]} t \, dE(t).
\end{equation}
Let $g(t) = t^{1/2}$ for $t \in \RR$. Since $g \in \mathscr{C}(\sigma_S(T), \RR)$, it follows from Theorem \ref{thm:August25j1} that
$$W := g(T) = \int_{[0, \| T \|]} t^{1/2} \, dE(t)$$
satisfies $W^2 = T$. Thus, we have established the existence of a positive operator $W \in \cB(\cH)$ so that $W^2 = T$. The proof that $W$ is unique follows from the uniqueness of the spectral measure $E$, just as in the case that $\cH$ is a complex Hilbert space.

The proofs of (ii)-(iv) follow readily from Theorem \ref{thm:August25j1} and \eqref{eq:Dec5tt1}.
\end{proof}

\section{Spectral integrals}
\label{sec:SIs}
The goal of this section is to extend the integral representation in Theorem \ref{thm:August25j1} to a more general class of functions. This will be useful when proving a spectral theorem for unbounded operators in Section \ref{sec:STU}.  To this end we will follow Chapter 4 of the book \cite{Schmuedgen}. Most of the proofs of the properties of spectral integrals are easily adapted from the classical case presented in \cite{Schmuedgen}, i.e., when $\cH$ is a complex Hilbert space. However, some facts require additional arguments which we will highlight.

Fix a normal operator $T \in \cB(\cH)$ and $j \in \mathbb{S}$. As in Section \ref{sec:STB}, we will assume throughout this section that $T$ is written with respect to a Hilbert basis $\cN_j$ of $\cH$ so that $J = L_j$. The existence of such a Hilbert basis is guaranteed by item (v) of Theorem \ref{thm:Sept1ub1}.

In view of Theorem \ref{thm:August25j1} we have the existence of a uniquely determined spectral measure $E$ so that \eqref{eq:Sept22wt1} holds.

\begin{defn}
\label{def:Dec15tt2}
Let $\mathscr{B}(E, \Omega_j^+, \CC_j)$, where $\Omega_j^+ = \sigma_S(T) \cap  \CC_j^+$, denote the set of all bounded $E$-measurable $\CC_j$-valued functions $f$ on $\Omega_j^+$ with the norm
$$\| f \|_{\infty} = \sup_{p \, \in \, \Omega_j^+} |f(p)|.$$
Let $\mathscr{B}_s(E, \Omega_j^+, \CC_j)$ denote the subset of simple functions in $\mathscr{B}(E, \Omega_j^+, \CC_j)$, i.e., all $\CC_j$-valued functions of the form $f(p) = \sum_{n=1}^k c_n \chi_{\sigma_n}(p)$, where $\sigma_1, \ldots \sigma_n$ are pairwise disjoint sets in $\mathfrak{B}(\Omega_j^+)$, $c_1, \ldots, c_n \in \CC_j$ and
$$\chi_{\sigma}(p) = \begin{cases} 1 & {\rm if} \quad p \in \sigma \\ 0 & {\rm if} \quad p \notin \sigma. \end{cases}$$
If $f \in \mathscr{B}_s(E, \Omega_j^+, \CC_j)$, then we may define
\begin{equation}
\label{eq:Nov29td2}
f(T) = \sum_{n=1}^k c_n E(\sigma_n).
\end{equation}
\end{defn}

\begin{lem}
\label{lem:Nov29jh1}
If $f \in \mathscr{B}_s(E, \Omega_j^+, \CC_j)$, then
\begin{equation}
\label{eq:Nov29re1}
\| f(T) \| \leq \| f \|_{\infty}.
\end{equation}
\end{lem}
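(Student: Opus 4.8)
The plan is to exploit the self-adjointness and projection-like nature of the spectral measure $E$ together with the polarization-free estimate already available for the diagonal inner product $\langle f(T)x, x\rangle$. Writing $f(p) = \sum_{n=1}^k c_n\chi_{\sigma_n}(p)$ with $\sigma_1,\dots,\sigma_k$ pairwise disjoint, the operator $f(T) = \sum_{n=1}^k c_n E(\sigma_n)$ is a $\CC_j$-linear combination of the mutually orthogonal projections $E(\sigma_n)$ (orthogonality of $E(\sigma_n)$ and $E(\sigma_m)$ for $n\neq m$ follows from Property (iv) of Theorem \ref{thm:August25df1} since $\sigma_n\cap\sigma_m=\emptyset$, and each $E(\sigma_n)$ is a self-adjoint idempotent by Properties (v) and (vi)). The point is that, because $\CC_j$ is a commutative field and the $c_n$ commute with one another, one can compute $\| f(T)x\|^2$ by a Pythagoras-type argument.

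First I would fix $x \in \cH$ and set $x_n = E(\sigma_n)x$. Using $E(\sigma_n)E(\sigma_m) = E(\sigma_n\cap\sigma_m) = 0$ for $n\neq m$ and $E(\sigma_n)^* = E(\sigma_n)$, one gets $\langle x_n, x_m\rangle = \langle E(\sigma_n)x, E(\sigma_m)x\rangle = \langle E(\sigma_m)E(\sigma_n)x, x\rangle = 0$ for $n\neq m$, so the $x_n$ are pairwise orthogonal. Now
\[
f(T)x = \sum_{n=1}^k c_n E(\sigma_n)x = \sum_{n=1}^k x_n c_n',
\]
where I must be careful about the side on which the scalar $c_n \in \CC_j$ acts; since $E(\sigma_n)$ is right linear and $c_n$ is a quaternion, $c_n E(\sigma_n)x$ has the scalar on the left, so I would instead organize the computation using the left scalar multiplication $L_j$ or simply keep track of $\langle f(T)x, f(T)x\rangle$ directly. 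Expanding,
\[
\| f(T)x\|^2 = \Big\langle \sum_{n} c_n x_n, \sum_{m} c_m x_m\Big\rangle = \sum_{n,m} \overline{c_m}\,\langle c_n x_n, x_m\rangle,
\]
and here the cross terms vanish: for $n\neq m$ the inner product $\langle x_n, x_m\rangle = 0$, and since $c_n$ acts as a scalar that commutes with the $\CC_j$-structure one still gets $\langle c_n x_n, x_m\rangle = 0$ (this is the one spot needing a short explicit check — that left multiplication by a quaternion does not destroy orthogonality, which holds because $\langle c_n x_n, x_m\rangle$ can be rewritten via the basis $\cN_j$ and the inner-product axioms). Hence $\| f(T)x\|^2 = \sum_{n=1}^k |c_n|^2 \| x_n\|^2 \leq \big(\max_n |c_n|^2\big)\sum_{n=1}^k \| x_n\|^2$.

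Finally, $\sum_{n=1}^k \| x_n\|^2 = \sum_n \langle E(\sigma_n)x, x\rangle = \langle E(\bigcup_n \sigma_n)x, x\rangle \leq \langle x, x\rangle = \| x\|^2$, using finite additivity (Property (iii)) and $E(\bigcup_n\sigma_n) \leq I_\cH$ (which follows from Property (ii) together with positivity of $E$ on the complement). Since $\max_n |c_n| \leq \| f\|_\infty$ (because the $\sigma_n$ partition a subset of $\Omega_j^+$ and $f$ takes the value $c_n$ on $\sigma_n$, assuming as usual the $\sigma_n$ are nonempty — empty pieces can be discarded), we conclude $\| f(T)x\|^2 \leq \| f\|_\infty^2 \| x\|^2$, i.e. $\| f(T)\| \leq \| f\|_\infty$. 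The main obstacle is purely bookkeeping: verifying that left multiplication by quaternionic scalars $c_n$ preserves the orthogonality of the $x_n$, which I expect to dispatch in one or two lines by passing to the Hilbert basis $\cN_j$ and invoking the conjugate-linearity of $\langle\cdot,\cdot\rangle$ in the second slot; everything else is the classical argument verbatim.
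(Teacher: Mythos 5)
Your proposal is correct and is essentially the paper's own proof: the paper computes $\| f(T)x\|^2 = \sum_{n=1}^k |c_n|^2 \| E(\sigma_n)x\|^2 \leq \| f\|_\infty^2 \| x\|^2$ by exactly the Pythagoras argument you describe, merely omitting the justification of the cross-term cancellation. Your extra care about left multiplication by $c_n\in\CC_j$ is legitimate and is settled as you anticipate, since $L_{\bar c}=L_c^*$ and $E(\sigma)$ commutes with $L_c$ for $c\in\CC_j$ (by the last assertion of Theorem \ref{thm:August25j1}, as the paper itself invokes in the proof of Lemma \ref{lem:Nov29w1}).
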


\begin{proof}
If $f = \sum_{n=1}^k c_n \chi_{\sigma_n}$, where $c_1, \ldots, c_k \in \CC_j$ and $\sigma_1, \ldots, \sigma_k$ are disjoint sets in $\mathfrak{B}(\Omega_j^+)$, then
\begin{align*}
\| f(T) x \|^2 =& \; \| \sum_{n=1}^k c_n E(\sigma_n) x \|^2 \\
=& \; \sum_{n=1}^k |c_n|^2 \| E(\sigma_n) x \|^2 \\
\leq& \; \| f \|_{\infty}^2 \| x \|^2.
\end{align*}
Thus, \eqref{eq:Nov29re1} holds.
\end{proof}

Fix $f \in \mathscr{B}(E, \Omega_j^+, \CC_j)$. Since $\mathscr{B}_s(E, \Omega_j^+, \CC_j)$ is a dense subset of $\mathscr{B}(E, \Omega_j^+, \CC_j)$, there exists a sequence of functions $\{ f_n \}_{n=0}^{\infty}$ belonging to $\mathscr{B}_s(E, \Omega_j^+, \CC_j)$ so that
$$\lim_{n \uparrow \infty} \| f_n - f \|_{\infty} = 0.$$
In view of \eqref{eq:Nov29re1}, $\{ f_n(T) x \}_{n=0}^{\infty}$ is a Cauchy sequence in $\cH$. Let $f(T)$ be given by
$$f(T) x = \lim_{n \uparrow \infty} f_n(T) x, \quad x \in \cH.$$
Note that $f$ does not depend on the choice of the sequence $\{ f_n \}_{n=0}^{\infty}$ and, consequently, neither does $f(T)$.

\begin{lem}
\label{lem:Nov29w1}
If $f, g \in \mathscr{B}(E, \Omega_j^+, \CC_j)$, where $\Omega_j^+ = \sigma_S(T) \cap  \CC_j^+$, $\alpha, \beta \in \CC_j$ and $x, y \in \cH$, then{\rm :}
\begin{enumerate}
\item[(i)] $f(T)^* = \bar{f}(T)$, $(\alpha f + \beta g)(T) = \alpha f(T) + \beta g(T)$.
\item[(ii)] $\langle f(T)x, y \rangle = \int_{\Omega_j^+} f(t) d\langle E(p)x, y \rangle$.
\item[(iii)] $\| f(T) x \|^2 = \int_{\Omega_j^+} |f(p)|^2 d\langle E(p)x, x \rangle$.
\item[(iv)] $\| f(T) \| \leq \| f \|_{\infty}$.
\item[(v)] $(fg)(T) = f(T) g(T)$.
\end{enumerate}
\end{lem}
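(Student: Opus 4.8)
The plan is to establish (i)--(v) first for simple functions $f,g \in \mathscr{B}_s(E,\Omega_j^+,\CC_j)$, where every expression involved reduces to a finite combination of the spectral projections $E(\sigma)$ and can be checked directly from Theorem \ref{thm:August25df1}, and then to bootstrap to arbitrary $f,g \in \mathscr{B}(E,\Omega_j^+,\CC_j)$ by approximating uniformly with simple functions and using the uniform bound $\|f(T)\| \le \|f\|_\infty$ of Lemma \ref{lem:Nov29jh1} to control all the limits. Before starting I would record the two genuinely quaternionic facts that will be used repeatedly. First, each $E(\sigma)$ commutes with left multiplication by scalars in $\CC_j$: indeed $E(\sigma)$ commutes with $J = L_j$ by Theorem \ref{thm:August25df1}(vii), and since $\CC_j = \RR + \RR j$ and $c = a+bj$ acts as $aI_{\cH}+bJ$, this gives $c\,E(\sigma)=E(\sigma)\,c$ as operators for all $c \in \CC_j$. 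Second, because $J$ is anti-self-adjoint and unitary, left multiplication by $c \in \CC_j$ has adjoint left multiplication by $\bar c$ and satisfies $\|cz\| = |c|\,\|z\|$ for $z \in \cH$ (the cross term vanishes since $\re\langle z, Jz\rangle = 0$, and $\|Jz\| = \|z\|$).

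For the simple-function case I would first replace the two partitions by their common refinement, so that $f = \sum_{m=1}^{k} c_m \chi_{\sigma_m}$ and $g = \sum_{m=1}^{k} d_m \chi_{\sigma_m}$ over one partition of $\Omega_j^+$ into pairwise disjoint Borel sets. Then (i) follows from $f(T)=\sum_m c_m E(\sigma_m)$, the self-adjointness $E(\sigma_m)^*=E(\sigma_m)$, and the two facts above; the $\CC_j$-linearity of $f \mapsto f(T)$ is clear on a common partition; and (ii) follows from the definition of the integral of a simple function, once the right-hand side is read through the splitting $f = f_0+jf_1$ and $f(T)=f_0(T)+Jf_1(T)$ of \eqref{eq:Dec9rr2}. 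For (iii) I would note that the vectors $E(\sigma_m)x$ are pairwise orthogonal (as $\langle E(\sigma_m)x,E(\sigma_n)x\rangle = \langle E(\sigma_m\cap\sigma_n)x,x\rangle$ vanishes when $m\ne n$) and combine this with $\|c_mE(\sigma_m)x\|^2 = |c_m|^2\|E(\sigma_m)x\|^2$ to get $\|f(T)x\|^2 = \sum_m |c_m|^2\|E(\sigma_m)x\|^2 = \int_{\Omega_j^+}|f(p)|^2\,d\langle E(p)x,x\rangle$; here the measure $\langle E(\cdot)x,x\rangle$ is positive and real-valued, so no ambiguity arises. Property (iv) for simple $f$ is precisely Lemma \ref{lem:Nov29jh1}. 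Finally (v) follows from $fg=\sum_m c_md_m\chi_{\sigma_m}$ together with $E(\sigma_m)E(\sigma_n)=E(\sigma_m\cap\sigma_n)=\delta_{mn}E(\sigma_m)$: the commutation of each $E(\sigma_m)$ with the scalars $d_n$ lets one reorganize $f(T)g(T)=\sum_{m,n}c_mE(\sigma_m)d_nE(\sigma_n)$ as $\sum_m c_md_mE(\sigma_m)=(fg)(T)$.

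To pass to general $f,g \in \mathscr{B}(E,\Omega_j^+,\CC_j)$ I would fix simple sequences $f_n\to f$ and $g_n\to g$ uniformly and use $f(T)x=\lim_n f_n(T)x$ for $x\in\cH$ along with $\sup_n\|f_n(T)\|\le\sup_n\|f_n\|_\infty<\infty$. Then (iv) passes to the limit from $\|f(T)x\| = \lim_n\|f_n(T)x\|$ and $\|f_n\|_\infty\to\|f\|_\infty$; (i) and $\CC_j$-linearity pass by letting $n\to\infty$ in $\langle f_n(T)x,y\rangle = \langle x,\bar f_n(T)y\rangle$, using $\bar f_n\to\bar f$ uniformly; (ii) passes by uniform convergence of the integrands against a measure of total variation $\le\|x\|\,\|y\|$ (Lemma \ref{lem:Sept12yry1}(iii)); (iii) passes by continuity of the norm and uniform convergence of $|f_n|^2$ against the finite positive measure $\langle E(\cdot)x,x\rangle$; and for (v) I would write
\[
f_n(T)g_n(T)x - f(T)g(T)x = f_n(T)\bigl(g_n(T)-g(T)\bigr)x + \bigl(f_n(T)-f(T)\bigr)g(T)x ,
\]
bound the first summand by $\|f_n\|_\infty\,\|(g_n(T)-g(T))x\|\to 0$, note the second tends to $0$ because $g(T)x$ is fixed, and observe that at the same time $(f_ng_n)(T)x\to(fg)(T)x$ since $f_ng_n\to fg$ uniformly, so the two limits coincide.

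I expect the main obstacle to be precisely the point the text flags as needing an argument beyond the classical case, namely bookkeeping with the non-commutativity of $\HH$: the simple-function computations for (i), (iii) and (v) all lean on the commutation of the spectral projections with left $\CC_j$-scalar multiplication and on $\|cz\|=|c|\,\|z\|$, both of which trace back to $J=L_j$ being anti-self-adjoint and unitary (Theorem \ref{thm:August25df1}); and statement (ii) forces one to fix, once and for all, the interpretation of the pairing between a $\CC_j$-valued integrand and the a priori $\HH$-valued measure $\sigma\mapsto\langle E(\sigma)x,y\rangle$, which is most naturally done through the decomposition $f(T)=f_0(T)+Jf_1(T)$ of \eqref{eq:Dec9rr2}. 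With those conventions settled, the remaining arguments are routine adaptations of the complex-Hilbert-space treatment in Chapter 4 of \cite{Schmuedgen}.
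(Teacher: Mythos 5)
Your proposal is correct and follows essentially the same route as the paper: reduce to simple functions via the density of $\mathscr{B}_s(E,\Omega_j^+,\CC_j)$ and the bound of Lemma \ref{lem:Nov29jh1}, verify (i)--(iv) directly, and prove (v) by the product computation whose key step is that $E(\sigma)$ commutes with $\CC_j$-scalars because $J=L_j$ commutes with each $E(\sigma)$. Your write-up simply makes explicit several verifications (the polarization of $\|cz\|=|c|\,\|z\|$, the orthogonality argument for (iii), and the limiting arguments) that the paper's proof dismisses as straightforward.
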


\begin{proof}
In view of the density of $\mathscr{B}_s(E, \Omega_j^+, \CC_j)$ in $\mathscr{B}(E, \Omega_j^+, \CC_j)$ and \eqref{eq:Nov29re1}, it suffices to check (i)-(v) when $f, g \in \mathscr{B}_s(E, \Omega_j^+, \CC_j)$. Assertions (i)-(iv) are checked in a straightforward manner. If
$f = \sum_{n=1}^k c_n \chi_{\sigma_n}$ and $g = \sum_{n=1}^k d_n \chi_{\tau_n}$ belong to $\mathscr{B}_s(E, \Omega_j^+, \CC_j)$, then
\begin{align}
(fg)(T) =& \; \sum_{m,n=1}^k c_m d_n E(\sigma_m \cap \tau_n ) \nonumber \\
=& \; \sum_{m,n=1}^k c_m d_n E(\sigma_m) E(\tau_n) \nonumber \\
=& \; \left\{ \sum_{m=1}^k c_m E(\sigma_m) \right\} \left\{ \sum_{n=1}^k d_n E(\tau_n) \right\}  \label{eq:Nov29cc1} \\
=& \; f(T) g(T). \nonumber
\end{align}
Line \eqref{eq:Nov29cc1} is justified by the fact that $J = L_j$ and, since $J$ commutes with $T$, $J$ must commute with $E(\sigma)$ for any $\sigma \in \mathfrak{B}(\Omega_j^+)$ (see the last assertion of Theorem \ref{thm:August25j1}). Consequently, $c E(\sigma) = E(\sigma) c$ for any $c \in \CC_j$ and $\sigma \in \mathfrak{B}(\Omega_j^+)$.
\end{proof}

We will now extend $\mathscr{B}(E, \Omega_j^+, \CC_j)$ to a more general class which will be useful when proving the spectral theorem for unbounded normal operators.

\begin{defn}
\label{def:Dec15yu3}
Let $\mathscr{B}_{\infty}(E, \Omega_j^+, \CC_j)$ denote the space of all $E$-measurable functions $f: \Omega_j^+ \to \CC_j \cup \{ \infty\}$ which satisfy
$$E( \{ p \in \Omega_j^+: f(p) = \infty \}) = 0.$$
\end{defn}

\begin{defn}
\label{def:Nov29u1}
A sequence of sets $\{ \sigma_n \}_{n=0}^{\infty}$, where $\sigma_n \in \mathfrak{B}(\Omega_j^+)$ for $n=0,1,\ldots$ is called a {\it bounding sequence} for a subset of functions $\mathfrak{F} \subseteq \mathscr{B}_{\infty}(E, \Omega_j^+, \CC_j)$ if
\begin{enumerate}
\item[(i)] For any $n=0,1,\ldots$, $f$ is bounded on $\sigma_n$.
\item[(ii)] $\sigma_{n} \subseteq \sigma_{n+1}$ for $n=0,1,\ldots$.
\item[(iii)] $E(\cup_{n=0}^{\infty} \sigma_n) = I_{\cH}$.
\end{enumerate}
\end{defn}

\begin{rem}
\label{rem:Nov29rr1}
If $\{ \sigma_n \}_{n=0}^{\infty}$ is a bounded sequence, then the following assertions follow from Theorem \ref{thm:August25df1}{\rm :}
\begin{enumerate}
\item[(i)] $E(\sigma_n) \preceq E(\sigma_{n+1})$.
\item[(ii)] $E(\sigma_n)x \to x$ as $n \uparrow \infty$, $x \in \cH$.
\item[(iii)] The set $\bigcup_{n=0}^{\infty} E(\sigma_n) \cH$ is dense in $\cH$.
\end{enumerate}
\end{rem}

We will now give meaning to $f(T)$ for $f \in \mathscr{B}_{\infty}(E, \Omega_j^+, \CC_j)$.

\begin{defn}
\label{def:Dec9tt2}
Fix $f \in \mathscr{B}_{\infty}(E, \Omega_j^+, \CC_j)$ and let $\{ \sigma_n \}_{n=0}^{\infty}$ be a bounding sequence for $f$. We let
\begin{equation}
\label{eq:Nov30u1}
f(T)x = \lim_{n \uparrow \infty} (\chi_{\sigma_n} f)(T)x
\end{equation}
with domain
\begin{equation}
\label{eq:Nov30u2}
\cD(f(T)) = \{ x \in \cH: \int_{\Omega_j^+} |f(p)|^2 d\langle E(p)x, x \rangle < \infty\}.
\end{equation}
\end{defn}

\begin{lem}
\label{lem:Nov30k1}
If $f \in \mathscr{B}_{\infty}(E, \Omega_j^+, \CC_j)$ and $\{ \sigma_n \}_{n=0}^{\infty}$ is a bounding sequence for $f$, then{\rm :}
\begin{enumerate}
\item[(i)] A vector $x$ belongs to $\cD(f(T))$ if and only if the sequence $\{ (\chi_{\sigma_n}f)(T)x \}_{n=0}^{\infty}$ converges in $\cH$, or, equivalently,
$$\sup_{n =0,1,\ldots } \| (f \chi_{\sigma_n})(T) x \| < \infty.$$
\item[(ii)] $f(T)$ does not depend on the choice of bounding sequence for $f$.
\item[(iii)] The set $\cup_{n=0}^{\infty} E(\sigma_n) \cH$ is a dense subset of $\cD(f(T))$. Moreover,
\begin{equation}
\label{eq:Nov30e1}
E(\sigma_n) f(T) \subseteq f(T) E(\sigma_n) = (f \chi_{\sigma_n})(T), \quad n =0,1,\ldots.
\end{equation}
\end{enumerate}
\end{lem}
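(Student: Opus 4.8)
The plan is to follow the classical development in Schm\"udgen's book, translating each step to the quaternionic setting and checking that no essential obstruction arises. Throughout, write $f_n := f\chi_{\sigma_n}$, so each $f_n \in \mathscr{B}(E,\Omega_j^+,\CC_j)$ and $f_n(T)$ is already defined and bounded by Lemma \ref{lem:Nov29w1}, with $\| f_n(T) \| \leq \| f_n \|_\infty$.

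\emph{Step (i).} First I would establish the formula $\| f_n(T)x \|^2 = \int_{\Omega_j^+} |f_n(p)|^2\, d\langle E(p)x,x\rangle = \int_{\sigma_n} |f(p)|^2\, d\langle E(p)x,x\rangle$, which is just item (iii) of Lemma \ref{lem:Nov29w1} applied to $f_n$. Since $\sigma_n \subseteq \sigma_{n+1}$ and the integrand is nonnegative, the sequence $\| f_n(T) x \|^2$ is nondecreasing, and by monotone convergence its limit is $\int_{\Omega_j^+} |f(p)|^2\, d\langle E(p)x,x\rangle$ (using that $E(\{f = \infty\}) = 0$, so the pointwise limit of $|f_n|^2$ agrees with $|f|^2$ $E$-a.e.). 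Hence $\sup_n \| f_n(T)x \| < \infty$ if and only if $x \in \cD(f(T))$. For the equivalence with convergence of $\{ f_n(T)x \}$: for $m > n$, $f_m(T)x - f_n(T)x = (f\chi_{\sigma_m \setminus \sigma_n})(T)x$ since $\sigma_n \subseteq \sigma_m$ and $f_m = f_n + f\chi_{\sigma_m\setminus\sigma_n}$ with disjoint supports, so again by Lemma \ref{lem:Nov29w1}(iii), $\| f_m(T)x - f_n(T)x \|^2 = \int_{\sigma_m \setminus \sigma_n} |f(p)|^2\, d\langle E(p)x,x\rangle$, which is the tail of a convergent integral precisely when $x \in \cD(f(T))$; thus the Cauchy condition is equivalent to the boundedness condition, and both are equivalent to membership in $\cD(f(T))$. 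This also shows the limit defining $f(T)x$ in \eqref{eq:Nov30u1} genuinely exists on $\cD(f(T))$.

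\emph{Step (ii).} For independence of the bounding sequence, let $\{\sigma_n\}$ and $\{\tau_n\}$ both bound $f$. Fix $x \in \cD(f(T))$. One checks $(f\chi_{\sigma_n})(T)x - (f\chi_{\sigma_n \cap \tau_m})(T)x = (f\chi_{\sigma_n \setminus \tau_m})(T)x$ has squared norm $\int_{\sigma_n \setminus \tau_m} |f|^2\, d\langle E(\cdot)x,x\rangle$, which tends to $0$ as $m \uparrow \infty$ for fixed $n$ because $E(\cup_m \tau_m) = I_\cH$ forces $\langle E(\sigma_n\setminus\tau_m)x,x\rangle \to 0$ and the integral is dominated by $\sup_{\sigma_n}|f|^2 \cdot \langle E(\sigma_n\setminus\tau_m)x,x\rangle$. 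A routine diagonal/interchange-of-limits argument (legitimate because of the uniform control coming from $x \in \cD(f(T))$) then gives $\lim_n (f\chi_{\sigma_n})(T)x = \lim_m (f\chi_{\tau_m})(T)x$. The cleanest route is to show both equal $\lim_n (f\chi_{\sigma_n \cap \tau_n})(T)x$, since $\{\sigma_n \cap \tau_n\}$ is itself a bounding sequence (conditions (i),(ii) are clear; (iii) holds because $\cup_n(\sigma_n\cap\tau_n) \supseteq$ a set of full $E$-measure, which follows from $E(\sigma_n)x \to x$ and $E(\tau_n)x \to x$ and Remark \ref{rem:Nov29rr1}).

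\emph{Step (iii).} For density of $\cup_n E(\sigma_n)\cH$ in $\cD(f(T))$: if $x \in \cD(f(T))$, then $E(\sigma_n)x \in E(\sigma_n)\cH$, and $E(\sigma_n)x \to x$ in $\cH$ by Remark \ref{rem:Nov29rr1}(ii); moreover $E(\sigma_n)x \in \cD(f(T))$ since $\int |f|^2 d\langle E(\cdot)E(\sigma_n)x, E(\sigma_n)x\rangle = \int_{\sigma_n}|f|^2 d\langle E(\cdot)x,x\rangle < \infty$ using $E(\sigma_m)E(\sigma_n) = E(\sigma_m \cap \sigma_n)$ from Theorem \ref{thm:August25df1}(iv); and one checks $f(T)E(\sigma_n)x = (f\chi_{\sigma_n})(T)x \to f(T)x$ using Step (i) applied with the bounding sequence unchanged. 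This simultaneously yields the graph-norm density. For \eqref{eq:Nov30e1}: the identity $f(T)E(\sigma_n) = (f\chi_{\sigma_n})(T)$ on all of $\cH$ follows because for any $x \in \cH$, $E(\sigma_n)x \in \cD(f(T))$ and the bounding sequence $\{\sigma_m\}$ gives $f(T)E(\sigma_n)x = \lim_m (f\chi_{\sigma_m})(T)E(\sigma_n)x = \lim_m (f\chi_{\sigma_m\cap\sigma_n})(T)x = (f\chi_{\sigma_n})(T)x$, the last step because $\sigma_m \cap \sigma_n = \sigma_n$ for $m \geq n$; here I use Lemma \ref{lem:Nov29w1}(v), $(gh)(T) = g(T)h(T)$, with $g = f\chi_{\sigma_m}$, $h = \chi_{\sigma_n}$. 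The inclusion $E(\sigma_n)f(T) \subseteq f(T)E(\sigma_n)$ then follows: for $x \in \cD(f(T))$, $E(\sigma_n)f(T)x = E(\sigma_n)\lim_m(f\chi_{\sigma_m})(T)x = \lim_m E(\sigma_n)(f\chi_{\sigma_m})(T)x = \lim_m (f\chi_{\sigma_n\cap\sigma_m})(T)x = (f\chi_{\sigma_n})(T)x = f(T)E(\sigma_n)x$, where commuting $E(\sigma_n)$ past $(f\chi_{\sigma_m})(T)$ again uses Lemma \ref{lem:Nov29w1}(v) together with Theorem \ref{thm:August25df1}(iv),(viii).

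\emph{Main obstacle.} The genuinely quaternionic point, rather than a routine transcription, is the bookkeeping around $\CC_j$-scalars and the spectral projections: in the classical proof one freely commutes complex scalars and projections, and here one must invoke the fact (established in the proof of Lemma \ref{lem:Nov29w1}, via $J = L_j$) that $cE(\sigma) = E(\sigma)c$ for $c \in \CC_j$, so that $(gh)(T) = g(T)h(T)$ remains valid for $\CC_j$-valued $g,h$. Beyond that, the only real analytic subtlety is justifying the interchange of the two limits in Step (ii); this is handled by the uniform bound furnished by $x \in \cD(f(T))$, reducing everything to monotone/dominated convergence against the positive scalar measure $\langle E(\cdot)x,x\rangle$, exactly as in the complex case treated in Chapter 4 of \cite{Schmuedgen}.
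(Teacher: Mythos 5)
Your proposal is correct and follows exactly the route the paper takes: the paper's own proof simply notes that $\mu_x(\sigma) = \langle E(\sigma)x, x\rangle$ is a positive measure and then defers to items (i)--(iii) of Theorem 4.13 in \cite{Schmuedgen}, which is precisely the classical argument you have written out in detail (including the correct identification of the one quaternionic subtlety, namely that $cE(\sigma) = E(\sigma)c$ for $c \in \CC_j$ via $J = L_j$). Your write-up supplies the details the paper omits, but there is no difference in approach.
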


\begin{proof}
We have already observed in Definition \ref{def:August25uu1} that $E(\sigma)$ is a positive operator on $\Omega_j^+$ for every $\sigma \in \mathfrak{B}(\Omega_j^+)$. Thus, $\mu_x$ is a positive measure on $\Omega_j^+$, where $\mu_x(\sigma)  = \langle E(\sigma)x, x \rangle$. Consequently, the proof of items (i)-(iii) can be completed in much the same way as in items (i)-(iii) of Theorem 4.13 in \cite{Schmuedgen}.
\end{proof}

In the following theorem, $\overline{W}$ shall denote the closure of an operator $W \in \cL(\cH)$.

\begin{thm}
\label{thm:Nov30k2}
If $f,g \in \mathscr{B}_{\infty}(E, \Omega_j^+, \CC_j)$ and $\alpha, \beta \in \CC_j$, then{\rm :}
\begin{enumerate}
\item[(i)] $\bar{f}(T) = f(T)^*$.
\item[(ii)] $(\alpha f + \beta g)(T) = \overline{\alpha f(T) + \beta g(T) }$.
\item[(iii)] $(fg)(T) = \overline{f(T)g(T)}$.
\item[(iv)] $f(T)$ is a closed normal operator on $\cH$ and
$$f(T)^*f(T) = (f \bar{f})(T) = (\bar{f} f )(T).$$
\item[(v)] $\cD( f(T) g(T) ) = \cD( g(T) ) \cap \cD( (fg)(T) )$.
\end{enumerate}
\end{thm}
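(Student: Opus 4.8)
The plan is to mirror the classical development in Chapter 4 of \cite{Schmuedgen}, reducing each assertion to the bounded-function case (Lemma \ref{lem:Nov29w1}) by truncating with the projections $E(\sigma_n)$ of a bounding sequence, and then controlling the passage to the limit via the monotone convergence of the scalar measures $\mu_x(\sigma)=\langle E(\sigma)x,x\rangle$. The one genuinely quaternionic subtlety — already isolated in the proof of Lemma \ref{lem:Nov29w1} — is that scalars in $\CC_j$ commute with every $E(\sigma)$ because $J=L_j$ commutes with $T$ and hence with each spectral projection; this is what makes $\CC_j$-linear combinations and products behave as in the complex case, so $\alpha f(T)+\beta g(T)$ and $f(T)g(T)$ are again $\CC_j$-linear operators and the formulas make sense.

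First I would prove (i): for $x\in\cD(f(T))$ and $y\in\cD(\bar f(T))$ one checks, using part (ii) of Lemma \ref{lem:Nov29w1} applied to the truncations $f\chi_{\sigma_n}$ and $\bar f\chi_{\sigma_n}$ together with $\langle (f\chi_{\sigma_n})(T)x,y\rangle=\langle x,(\bar f\chi_{\sigma_n})(T)y\rangle$, that $\langle f(T)x,y\rangle=\langle x,\bar f(T)y\rangle$; this gives $\bar f(T)\subseteq f(T)^*$. For the reverse inclusion one shows $\cD(f(T)^*)\subseteq\cD(\bar f(T))$ by testing against the dense set $\cup_n E(\sigma_n)\cH$ and using \eqref{eq:Nov30e1} to write $\langle f(T)E(\sigma_n)x,y\rangle=\langle E(\sigma_n)x,f(T)^*y\rangle$, then letting $n\uparrow\infty$ and invoking Lemma \ref{lem:Nov30k1}(i) to force $\int|f|^2\,d\mu_y<\infty$. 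Next, for (ii) and (iii) I would note that $\alpha f(T)+\beta g(T)$ (resp. $f(T)g(T)$) agrees with $(\alpha f+\beta g)(T)$ (resp. $(fg)(T)$) on the common core $\cup_n E(\sigma_n)\cH$ by \eqref{eq:Nov30e1} and Lemma \ref{lem:Nov29w1}(i),(v) applied to truncations; since the right-hand operators are closed (being of the form $h(T)$, whose closedness follows from Lemma \ref{lem:Nov30k1}(i)), they contain the closures of the left-hand operators, and a reverse inclusion via (i) — using that $h(T)^{**}=h(T)$ for closed $h(T)$ and Theorem \ref{thm:Dec14uu1} — upgrades this to equality of the closures.

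For (iv), closedness of $f(T)$ is Lemma \ref{lem:Nov30k1}(i) rephrased: the graph-limit of $(f\chi_{\sigma_n})(T)x$ exists exactly on $\cD(f(T))$. Normality then follows from (i) and (iii): $f(T)^*f(T)=\overline{\bar f(T)f(T)}=(\bar f f)(T)=(f\bar f)(T)=\overline{f(T)f(T)^*}=f(T)f(T)^*$, where the middle equalities use that $\bar f f=f\bar f$ pointwise and (iii); one must also check $\cD(f(T)^*f(T))=\cD(f(T)f(T)^*)$, which comes out of (v). Finally (v): the inclusion $\cD(f(T)g(T))\subseteq\cD(g(T))$ is automatic, and both inclusions with $\cD((fg)(T))$ follow by writing, for $x\in\cD(g(T))$, $\int|f|^2\,d\mu_{g(T)x}$ in terms of $\int|fg|^2\,d\mu_x$ via Lemma \ref{lem:Nov29w1}(iii) applied to truncations (using $d\mu_{g(T)x}=|g|^2\,d\mu_x$ as measures, which is the spectral-integral substitution rule).

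I expect the main obstacle to be (v) together with the domain bookkeeping needed to close the normality argument in (iv): one has to justify the change-of-variables identity $\int_{\Omega_j^+}|f(p)|^2\,d\langle E(p)g(T)x,g(T)x\rangle=\int_{\Omega_j^+}|f(p)g(p)|^2\,d\langle E(p)x,x\rangle$ rigorously for unbounded $f,g$, which requires approximating by truncations, invoking monotone convergence for the positive measures $\mu_x$, and being careful that no $\CC_j$-noncommutativity slips in — but this last point is precisely handled by the observation that $E(\sigma)$ is central with respect to $\CC_j$. Once that identity is in hand, all five items reduce to the bounded case plus closure-taking, exactly as in Theorem 4.16 of \cite{Schmuedgen}.
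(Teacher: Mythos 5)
Your proposal is correct and follows essentially the same route as the paper, whose entire proof is the remark that Theorem 4.16 of \cite{Schmuedgen} ``can easily be adapted'' to the quaternionic setting; you have in effect supplied that adaptation, correctly isolating the one genuinely quaternionic point (that $\CC_j$-scalars commute with every $E(\sigma)$ because $J=L_j$ commutes with the spectral projections, as already exploited in the proof of Lemma \ref{lem:Nov29w1}). The only step I would tighten is the closedness claim in (iv): rather than ``rephrasing'' Lemma \ref{lem:Nov30k1}(i), which characterizes the domain but does not by itself give closedness, deduce it from your item (i) via $f(T)=\bar{\bar{f}}(T)=(\bar{f}(T))^{*}$, which is closed by Theorem \ref{thm:Dec14yy1}(i).
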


\begin{proof}
The proof of items (i)-(iv) when $\cH$ is a complex Hilbert space (see items (i)-(v) of Theorem 4.16 in \cite{Schmuedgen}) can easily be adapted to the case when $\cH$ is a quaternionic Hilbert space.
\end{proof}

\begin{thm}
\label{thm:Nov30yy1}
If $f \in \mathscr{B}_{\infty}(E, \Omega_j^+, \CC_j)$, then $f(T)$ is invertible if and only if $f$ does not vanish $E$-a.e. on $\Omega_j^+$. In this case,
\begin{equation}
\label{eq:Nov30rew2}
f(T)^{-1} = (1/f)(T),
\end{equation}
where we use the convention that $1/0 = \infty$ and $1/\infty = 0$.
\end{thm}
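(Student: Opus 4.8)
The plan is to prove both implications using the multiplicativity of the $\mathscr{B}_{\infty}$-functional calculus recorded in Theorem \ref{thm:Nov30k2}, together with the fact that the kernel of $f(T)$ is controlled by the zero set of $f$; this mirrors the classical spectral-integral argument in \cite{Schmuedgen}.

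First I would prove the ``only if'' part by contraposition. Suppose $f$ vanishes on $\sigma := \{ p \in \Omega_j^+ : f(p) = 0 \}$ with $E(\sigma) \neq 0$, and pick $x \in \cH$ with $v := E(\sigma) x \neq 0$. Using Properties (iv)--(vi) of Theorem \ref{thm:August25df1} one computes $\langle E(\tau) v, v \rangle = \langle E(\sigma) E(\tau) E(\sigma) x, x \rangle = \langle E(\sigma \cap \tau) x, x \rangle$ for every $\tau \in \mathfrak{B}(\Omega_j^+)$, so the measure $\tau \mapsto \langle E(\tau) v, v \rangle$ is carried by $\sigma$; since $f \equiv 0$ on $\sigma$, the integral in \eqref{eq:Nov30u2} is zero and hence $v \in \cD(f(T))$. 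Now fix a bounding sequence $\{ \sigma_n \}$ for $f$. Since $E(\sigma) = (\chi_\sigma)(T)$ and $\chi_{\sigma_n} f, \chi_\sigma \in \mathscr{B}(E, \Omega_j^+, \CC_j)$, Lemma \ref{lem:Nov29w1}(v) gives $(\chi_{\sigma_n} f)(T) v = \bigl( (\chi_{\sigma_n} f)\chi_\sigma \bigr)(T) x = (\chi_{\sigma_n \cap \sigma} f)(T) x = 0$, because $\chi_{\sigma_n \cap \sigma} f$ is identically zero on $\Omega_j^+$. Passing to the limit in \eqref{eq:Nov30u1} yields $f(T) v = 0$, so $\Ker(f(T)) \neq \{ 0 \}$ and $f(T)$ is not invertible.

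For the ``if'' part, assume $E(\{ p \in \Omega_j^+ : f(p) = 0 \}) = 0$ and set $g := 1/f$ with the stated conventions. Then $g$ is $E$-measurable and $\{ g = \infty \} = \{ f = 0 \}$ is $E$-null, so $g \in \mathscr{B}_{\infty}(E, \Omega_j^+, \CC_j)$. Off the $E$-null set $N := \{ f = 0 \} \cup \{ f = \infty \}$ we have $fg = gf = 1$; since the $\mathscr{B}_{\infty}$-functional calculus depends only on the $E$-equivalence class (a consequence of Lemma \ref{lem:Nov29w1}(ii) and a bounding-sequence argument) and the constant function $1$ is sent to $I_{\cH}$ by Theorem \ref{thm:August25df1}(ii), we get $(fg)(T) = (gf)(T) = I_{\cH}$. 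Theorem \ref{thm:Nov30k2}(iii) then gives $\overline{g(T) f(T)} = I_{\cH}$ and $\overline{f(T) g(T)} = I_{\cH}$, while Theorem \ref{thm:Nov30k2}(v) pins down the domains: $\cD(g(T) f(T)) = \cD(f(T)) \cap \cD((gf)(T)) = \cD(f(T))$ and likewise $\cD(f(T) g(T)) = \cD(g(T))$. Hence $g(T) f(T) = I_{\cH}|_{\cD(f(T))}$ and $f(T) g(T) = I_{\cH}|_{\cD(g(T))}$. These two one-sided identities force $f(T)$ to be injective with $\Ran(f(T)) = \cD(g(T))$ and $f(T)^{-1} = g(T) = (1/f)(T)$, which is \eqref{eq:Nov30rew2}.

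The measure-theoretic bookkeeping in the second paragraph and the $E$-a.e.\ invariance of the functional calculus used in the third are routine adaptations of the complex case in \cite{Schmuedgen}. The one point needing care — and the main ``obstacle'' — is the meaning of ``invertible'' for an unbounded operator here: it is that $f(T)$ possesses an inverse in $\cL(\cH)$, not necessarily a bounded one, and one must check that the pair of one-sided identities above genuinely produces the two-sided inverse $(1/f)(T)$ with matching domains $\Ran(f(T)) = \cD((1/f)(T))$; everything else is bookkeeping.
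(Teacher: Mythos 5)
Your argument is correct and is precisely the route the paper takes: the paper's proof of this theorem is a one-line deferral to Proposition 4.19 of \cite{Schmuedgen}, and your two paragraphs are a faithful adaptation of that classical argument (kernel vector $E(\sigma)x$ for the ``only if'' direction, multiplicativity of the $\mathscr{B}_{\infty}$-calculus plus the domain identity of Theorem \ref{thm:Nov30k2}(v) for the ``if'' direction) to the quaternionic setting. Nothing further is needed.
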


\begin{proof}
The proof when $\cH$ is a complex Hilbert space (see Proposition 4.19 in \cite{Schmuedgen}) can easily be adapted to the case when $\cH$ is a quaternionic Hilbert space.
\end{proof}

\begin{lem}
\label{lem:Nov30gh2}
If $g: \Omega_j^+ \to \widetilde{\Omega}_j^+ \subseteq \CC_j$, $h \in \mathscr{B}_{\infty}(\widetilde{E}, \widetilde{\Omega}_j^+, \CC_j)$ and $\widetilde{E}$ is the spectral measure on $\tilde{\Omega}_j^+$ given by
$$\widetilde{E}(\sigma) = E(g^{-1}(\sigma)), \quad \sigma \in \mathfrak{B}(\widetilde{\Omega}_j^+),$$
then $h \circ g \in \mathscr{B}_{\infty}(E, \Omega_j^+, \CC_j)$ and
\begin{equation}
\label{eq:Nov30yh2}
\int_{\widetilde{\Omega}_j} h(p) d\widetilde{E}(p) = \int_{ {\Omega}_j^+ } h(g(p)) dE(p).
\end{equation}
\end{lem}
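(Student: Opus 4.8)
The plan is to prove the change-of-variables formula \eqref{eq:Nov30yh2} by the standard measure-theoretic strategy: first verify it for characteristic functions, then extend to simple functions by linearity, then to bounded measurable functions by approximation, and finally to $\mathscr{B}_{\infty}(\widetilde{E}, \widetilde{\Omega}_j^+, \CC_j)$ using a bounding sequence. First I would check that $\widetilde{E}$ is indeed a spectral measure on $\widetilde{\Omega}_j^+$: properties (i)--(viii) of Theorem \ref{thm:August25df1} for $\widetilde{E}(\sigma) = E(g^{-1}(\sigma))$ follow from the corresponding properties of $E$ together with the set-theoretic identities $g^{-1}(\emptyset) = \emptyset$, $g^{-1}(\widetilde{\Omega}_j^+) = \Omega_j^+$, $g^{-1}(\sigma \cap \tau) = g^{-1}(\sigma) \cap g^{-1}(\tau)$ and $g^{-1}(\sigma \cup \tau) = g^{-1}(\sigma) \cup g^{-1}(\tau)$; here one should note $g$ must be assumed $E$-measurable for $g^{-1}(\sigma) \in \mathfrak{B}(\Omega_j^+)$ to make sense. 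The same identities show that $h \circ g$ is $E$-measurable whenever $h$ is $\widetilde{E}$-measurable, and that $E(\{p : h(g(p)) = \infty\}) = E(g^{-1}(\{q : h(q) = \infty\})) = \widetilde{E}(\{q : h(q) = \infty\}) = 0$, so $h \circ g \in \mathscr{B}_{\infty}(E, \Omega_j^+, \CC_j)$.

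For the base case, if $h = \chi_{\sigma}$ with $\sigma \in \mathfrak{B}(\widetilde{\Omega}_j^+)$, then $h \circ g = \chi_{g^{-1}(\sigma)}$, and by the definition of spectral integrals of simple functions \eqref{eq:Nov29td2} both sides of \eqref{eq:Nov30yh2} equal $\widetilde{E}(\sigma) = E(g^{-1}(\sigma))$. Linearity then gives the identity for all $h \in \mathscr{B}_s(\widetilde{E}, \widetilde{\Omega}_j^+, \CC_j)$; one has to be slightly careful that the coefficients $c_n \in \CC_j$ may fail to commute with operator entries, but since $J = L_j$ commutes with every $\widetilde{E}(\sigma)$ (exactly as in line \eqref{eq:Nov29cc1} of Lemma \ref{lem:Nov29w1}), scalars in $\CC_j$ commute with the relevant operators and the computation goes through. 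Next, for a general bounded $E$-measurable $h \in \mathscr{B}(\widetilde{E}, \widetilde{\Omega}_j^+, \CC_j)$, choose simple functions $h_n \to h$ with $\|h_n - h\|_{\infty} \to 0$; then $h_n \circ g \to h \circ g$ uniformly on $\Omega_j^+$ as well, so applying $\|\,\cdot\,(T)\| \leq \|\,\cdot\,\|_{\infty}$ from item (iv) of Lemma \ref{lem:Nov29w1} (in both the $\widetilde{E}$ and $E$ pictures) and passing to the limit yields \eqref{eq:Nov30yh2} for bounded $h$.

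Finally, for $h \in \mathscr{B}_{\infty}(\widetilde{E}, \widetilde{\Omega}_j^+, \CC_j)$, pick a bounding sequence $\{\tau_n\}_{n=0}^{\infty}$ for $h$ in $\widetilde{\Omega}_j^+$; then $\{g^{-1}(\tau_n)\}_{n=0}^{\infty}$ is a bounding sequence for $h \circ g$ in $\Omega_j^+$, since $h \circ g$ is bounded on $g^{-1}(\tau_n)$, the sets are increasing, and $E(\bigcup_n g^{-1}(\tau_n)) = E(g^{-1}(\bigcup_n \tau_n)) = \widetilde{E}(\bigcup_n \tau_n) = I_{\cH}$. By Definition \ref{def:Dec9tt2}, $\left(\int_{\widetilde{\Omega}_j^+} h\, d\widetilde{E}\right)x = \lim_n (\chi_{\tau_n} h)(T)x$ and $\left(\int_{\Omega_j^+} h \circ g\, dE\right)x = \lim_n (\chi_{g^{-1}(\tau_n)} (h\circ g))(T)x$; the bounded case already proven gives $(\chi_{\tau_n} h)(T) = (\chi_{g^{-1}(\tau_n)}(h \circ g))(T)$ for each $n$ because $(\chi_{\tau_n} h) \circ g = \chi_{g^{-1}(\tau_n)}(h \circ g)$, so the two limits agree on a common domain and the operators coincide. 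The only genuinely delicate point — the main obstacle — is bookkeeping the domains: one must confirm that $\cD\!\left(\int h\circ g\, dE\right)$ and $\cD\!\left(\int h\, d\widetilde{E}\right)$ coincide, which follows from \eqref{eq:Nov30u2} together with the identity $\int_{\Omega_j^+} |h(g(p))|^2 d\langle E(p)x,x\rangle = \int_{\widetilde{\Omega}_j^+} |h(q)|^2 d\langle \widetilde{E}(q)x,x\rangle$, itself an instance of the classical change-of-variables formula for the scalar positive measures $\mu_x$ and $\widetilde{\mu}_x$; once this is in hand the equality of unbounded operators in \eqref{eq:Nov30yh2} is immediate.
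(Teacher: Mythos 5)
Your proposal is correct and follows essentially the same route as the paper, which simply defers to the classical change-of-variables argument (Proposition 4.24 in Schm\"udgen's book) adapted to the quaternionic setting; your write-up is a faithful expansion of that argument, including the one genuinely quaternionic point (commutation of $\CC_j$-scalars with the spectral projections via $J=L_j$). The only cosmetic difference is that the paper flags the polarization formula \eqref{eq:Sept24yy1} as the needed adaptation, whereas you sidestep it by working with operator-norm convergence and the diagonal measures $\mu_x$, which is equally valid.
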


\begin{proof}
 The proof is similar to the proof when $\cH$ is a complex Hilbert space (see Proposition 4.24 in \cite{Schmuedgen}) except for the fact that the polarization formula \eqref{eq:Sept24yy1} needs to be used.
\end{proof}

\section{The spectral theorem for unbounded normal operators based on the $S$-spectrum}
\label{sec:STU}
In this section we will consider normal operators $T$ which are unbounded, i.e., $T \in \cL(\cH)$ but $T \notin \cB(\cH)$. The strategy will be to transform $T$ into a normal operator $Z_T \in \cB(\cH)$ and use Theorem \ref{thm:August25j1} and a change of variable argument to obtain a spectral theorem for $T$ based on the $S$-spectrum. Obtaining a spectral theorem for unbounded operators in the aforementioned way has been done in the classical case, i.e., when $\cH$ is a complex Hilbert space. See, e.g., the book of Schm\"udgen \cite{Schmuedgen}.

Given $T \in \cL(\cH)$, we let
\begin{equation}
\label{eq:August25nv1}
Z_T = T C_T^{1/2},
\end{equation}
where $C_T = ( I_{\cH} + T^* T)^{-1}$. Note that $C_T$ is a bounded positive operator on $\cH$ and
\begin{equation}
\label{eq:August25hn1}
C_T = I_{\cH} - Z_T^* Z_T.
\end{equation}
Formula \eqref{eq:August25hn1} follows from
\begin{align*}
I_{\cH} - Z_T^* Z_T =& \; I - C_T^{1/2} T^* T C_T^{1/2} = C_T^{1/2} ( C_T^{-1} - T^* T ) C_T^{1/2} \\
=& \; C_T.
\end{align*}

\begin{thm}
\label{thm:August25er5}
Let $T \in \cL(\cH)$ be a densely defined closed operator on $\cH$. The operator $Z_T$ has the following properties{\rm :}
\begin{enumerate}
\item[(i)] $Z_T \in \cB(\cH)$, $\| Z_T \| \leq 1$ and
\begin{equation}
\label{eq:Sept3yy1}
C_T = (I_{\cH} + T^* T)^{-1} = I_{\cH} - Z_T^* Z_T.
\end{equation}
\item[(ii)] $(Z_T)^* = Z_{T^*}$.
\item[(iii)] If $T$ is normal, then $Z_T$ is normal.
\end{enumerate}
\end{thm}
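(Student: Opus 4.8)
The plan is to establish the three items in the order they are stated, since each relies on the previous one. For item (i), the key facts are that $C_T = (I_\cH + T^*T)^{-1}$ is a well-defined bounded positive operator (this uses that $T$ is densely defined and closed, so $I_\cH + T^*T$ is a bijection of $\cD(T^*T)$ onto $\cH$ with bounded positive inverse, a standard fact whose quaternionic proof mirrors the complex one via Theorems \ref{thm:Dec14yy1} and \ref{thm:Dec14uu1}). Then $C_T^{1/2}$ exists and is bounded positive by Corollary \ref{cor:Nov30ye1}(i) applied to $C_T$. The identity \eqref{eq:Sept3yy1} is exactly \eqref{eq:August25hn1}, already verified in the excerpt; from it one reads off $Z_T^* Z_T = I_\cH - C_T \preceq I_\cH$, hence $\| Z_T x \|^2 = \langle Z_T^* Z_T x, x \rangle \leq \| x \|^2$, giving $\| Z_T \| \leq 1$. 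The one point needing care is that $Z_T = T C_T^{1/2}$ is genuinely bounded and everywhere defined: since $C_T^{1/2}$ has range contained in $\cD(T)$ (indeed in $\cD(T^*T)$, because $\Ran(C_T) = \cD(T^*T)$ and $C_T^{1/2}$ maps into $\cD(T)$ — this requires the commuting functional calculus for the positive operator $C_T$ and the fact that $T$ is closed), the product makes sense on all of $\cH$, and boundedness then follows from the estimate just given once one knows $Z_T$ is closable, which it is as a bounded-below-domain operator with closed $T$.

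For item (ii), I would compute $\langle Z_T x, y \rangle = \langle T C_T^{1/2} x, y \rangle$ and push the adjoint across. The natural route is to show $Z_T^* = C_T^{1/2} T^*$ as operators and then identify $C_T^{1/2} T^*$ with $Z_{T^*} = T^* C_{T^*}^{1/2}$ using $C_{T^*} = (I_\cH + T T^*)^{-1}$ together with the intertwining relation $T^*(I_\cH + T T^*)^{-1} = (I_\cH + T^* T)^{-1} T^*$, equivalently $T^* C_{T^*} = C_T T^*$ on the appropriate domain, and its square-root version $T^* C_{T^*}^{1/2} = C_T^{1/2} T^*$ (obtained from the functional-calculus identity $g(C_T) T^* \subseteq T^* g(C_{T^*})$ for $g(t) = t^{1/2}$, which one proves first for polynomials and then passes to the limit as in Remark \ref{rem:Dec9t1}). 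Establishing $Z_T^* = C_T^{1/2} T^*$ itself is the delicate part: the inclusion $C_T^{1/2} T^* \subseteq (T C_T^{1/2})^* = Z_T^*$ is straightforward, and equality follows because the right side is bounded (by item (i) applied to $T^*$, which is also densely defined and closed by Theorem \ref{thm:Dec14yy1}(i)) while the left side is densely defined, so both are bounded extensions of each other — more precisely one checks the domains agree. This is where I expect the main obstacle: juggling the domains of $T$, $T^*$, $T^*T$, $TT^*$ and keeping the intertwining identities on the correct domains, entirely in the quaternionic setting where one cannot blithely invoke complex spectral theory but must lean on the real/intrinsic functional calculus of Section \ref{sec:funcCalc}.

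For item (iii), assume $T$ is normal, so $\cD(T) = \cD(T^*)$ and $TT^* = T^*T$, whence $C_T = C_{T^*}$ and therefore $Z_{T^*} = Z_T$ follows from... no: rather, from $TT^* = T^*T$ we get $C_T = C_{T^*}$, and then $Z_T^* = Z_{T^*} = T^* C_{T^*}^{1/2} = T^* C_T^{1/2}$. To conclude $Z_T Z_T^* = Z_T^* Z_T$, write $Z_T Z_T^* = T C_T^{1/2} C_T^{1/2} T^* = T C_T T^*$ and $Z_T^* Z_T = T^* C_T^{1/2} C_T^{1/2} T = T^* C_T T$, and these are equal because $C_T$ is a limit (in the strong sense, via bounded functional calculus in the commuting self-adjoint operators generated by $T^*T = TT^*$) of polynomials in $T^*T = TT^*$, so $T C_T T^*$ and $T^* C_T T$ are both expressible through the same functional calculus and coincide; alternatively, one verifies directly that $T C_T T^* = I_\cH - C_T = T^* C_T T$ using $C_T^{-1} = I_\cH + T^*T = I_\cH + TT^*$. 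Finally one notes $Z_T$ is closed since it is bounded and everywhere defined (closed graph), so $Z_T$ is normal in the sense of Definition \ref{def:Aug24y1}. Throughout, the recurring technical point to watch is that products like $T C_T^{1/2}$ and $C_T^{1/2} T^*$ have the right domains and that the functional-calculus intertwining relations are applied only where all operators involved are defined.
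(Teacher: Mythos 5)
Your architecture coincides with the paper's: item (i) via a norm estimate plus closedness of $T$; item (ii) via the intertwining relation $C_T T^* = T^* C_{T^*}$ promoted to square roots by uniform polynomial approximation (Weierstrass) and closedness of $T^*$; item (iii) from $C_T = C_{T^*}$ together with (i) and (ii). Three points of rigor need repair, all in the domain bookkeeping you yourself flagged as the obstacle. First, your parenthetical claim that $\Ran(C_T^{1/2}) \subseteq \cD(T^*T)$ is false in general: one has $\Ran(C_T) = \cD(T^*T)$, while $\Ran(C_T^{1/2})$ is only contained in $\cD(T)$, and establishing even that containment is the actual crux of (i). The paper gets it from the estimate $\| T C_T^{1/2} (C_T^{1/2} x) \|^2 = \langle T^*T C_T x, C_T x \rangle \leq \langle C_T^{-1} C_T x, C_T x \rangle = \| C_T^{1/2} x \|^2$, which shows $Z_T = T C_T^{1/2}$ is a contraction on the dense set $\Ran(C_T^{1/2})$; since $Z_T$ is closed (closed $T$ composed with bounded $C_T^{1/2}$), it must agree with the bounded everywhere-defined closure of that restriction, giving $\cD(Z_T) = \cH$ and $\| Z_T \| \leq 1$. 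Second, reading $\| Z_T \| \leq 1$ off \eqref{eq:Sept3yy1} is circular, because \eqref{eq:Sept3yy1} only makes sense once $Z_T \in \cB(\cH)$ is known; the paper derives the norm bound first and then proves \eqref{eq:Sept3yy1} via $(I_{\cH} - C_T) C_T^{1/2} \subseteq Z_T^* Z_T C_T^{1/2}$ and density of $\Ran(C_T^{1/2})$. Third, in (iii) the identity $Z_T^* Z_T = T^* C_T T$ silently commutes $C_T^{1/2}$ past $T$, which is an additional intertwining claim you have not justified; the cleaner route, which the paper takes and which you should adopt, is $I_{\cH} - Z_T^* Z_T = (I_{\cH} + T^*T)^{-1} = (I_{\cH} + TT^*)^{-1} = I_{\cH} - Z_{T^*}^* Z_{T^*} = I_{\cH} - Z_T Z_T^*$, using (ii) in the last step. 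None of these observations changes your overall plan, but as written the proof of (i) rests on a false containment and a circular inference, so it is not yet complete.
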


\begin{proof} The proof is based on the proof of Lemma 5.7 in \cite{Schmuedgen} and is broken into steps.

\bigskip

\noindent {\bf Step 1:} {\it Prove {\rm (i)}}. \\

\bigskip

First note that
\begin{equation}
\label{eq:Sept3yt1}
\{\text{$C_T x$: $x \in \cH$}\} = \cD(I_{\cH} + T^* T ) = \cD(T^* T).
\end{equation}
Consequently, if $x \in \cH$, then
\begin{align*}
\| T C_T^{1/2} C_T^{1/2} x \|^2 =& \; \langle T^* T C_T x, C_T x \rangle \\
\leq& \; \langle (I_{\cH} + T^* T) C_T x, C_T x \rangle \\
=& \; \langle C_T^{-1} C_T x , C_T x \rangle \\
=& \; \langle x, C_Tx \rangle \\
=& \; \| C_T^{1/2} x \|^2.
\end{align*}
Thus, if $y \in \{\text{$C_T^{1/2}x$: $x \in \cH$}\}$, then
\begin{equation}
\label{eq:Sept3ttu1}
\| Z_T y \| = \| T C_T^{1/2} y \| \leq \| y \|.
\end{equation}
As ${\rm Ker}(C_T) = \{ 0 \}$, we have that ${\rm Ker}(C_T^{1/2}) = \{ 0 \}$ and thus $\{\text{$C_T^{1/2} x$: $x \in \cH$}\}$ is a dense subset of $\cH$. As $T$ is a closed operator by assumption and $C_T^{1/2} \in \cB(\cH)$, we get that $Z_T$ is closed as well. Thus, we have $\{\text{$C_T^{1/2}x$: $x \in \cH$}\} \subseteq \cD(T)$, $\cD(Z_T) = \cH$ and, in view of \eqref{eq:Sept3ttu1}, $\| Z_T \| \leq 1$.

Next, it follows from \eqref{eq:Sept3ttu1} and $C^{1/2} T^* \subseteq Z_T^*$ that
\begin{align*}
(I_{\cH} - C_T) C_T^{1/2} =& \; C_T^{1/2} (I_{\cH} + T^* T) C_T - C_T^{1/2} C_T \\
=& \; C_T^{1/2} T^* T C_T^{1/2} C_T^{1/2} \\
\subseteq & \; Z_T^* Z_T C_T^{1/2}.
\end{align*}
Thus, $Z_T^* Z_T C_T^{1/2} = (I_{\cH} - C_T )C_T^{1/2}$ and, as $\{\text{$C_T^{1/2} x$: $x \in \cH$}\}$ is a dense subset of $\cH$, we get \eqref{eq:Sept3yy1}.

\bigskip

\noindent {\bf Step 2:} {\it Prove {\rm (ii)}}. \\

\bigskip

Using \eqref{eq:Sept3yy1} we get that $C_{T^*} = (I_{\cH} + T T^*)^{-1}$. If $x \in \cD(T^*)$, then let $y = C_{T^*} x$. Therefore,
$$x = (I_{\cH} + T T^* )y $$
and
$$T^* x = T^* (I_{\cH} + TT^*)y = (I_{\cH} + T^* T) T^* y.$$
Thus, $C_{T^*} x \in \cD(T^*)$ and hence
\begin{equation}
\label{eq:Sept3rtz1}
C_T T^* x = T^* y = T^* C_{T^*} x.
\end{equation}
It follows easily from \eqref{eq:Sept3rtz1} and \eqref{eq:Sept3yy1} that $p(C_{T^*})x \in \cD(T^*)$ and
$$p (C_T ) T^* x = T^* p(C_{T^*} ) x$$
for any real polynomial $p$ of a real variable. By the Weierstrass approximation theorem, there exists a sequence of real polynomials $\{ \phi_n \}_{n=0}^{\infty}$ which converge uniformly in supremum norm to the function $t \mapsto t^{1/2}$ on $[0, 1]$. Using Property (iv) of Theorem \ref{thm:July30e1} we have that
$$\lim_{n \uparrow \infty} \| \phi_n(C_T) - C_T^{1/2} \| = \lim_{n \uparrow \infty} \| \phi_n( C_{T^*}) - C_{T^*}^{1/2} \| = 0.$$

Since $T$ is a closed operator, $T^*$ is also a closed operator. Thus, we have
\begin{align*}
C_T^{1/2} T^* x =& \; \lim_{n \uparrow \infty} \phi_n(C_T) T^* x = \lim_{n \uparrow \infty} T^* \phi_n (C_{T^*}) x \\
=& \; T^* (C_{T^*})^{1/2} x \quad {\rm for} \quad x \in \cD(T^*).
\end{align*}
As $C_T^{1/2} T^* \subseteq (TC_T^{1/2})^* = Z_{T^*}$, we get that
$$Z_{T^*} x = C_T^{1/2} T^* x = T^* (C_{T^*})^{1/2} x = (Z_{T})^* x$$
for $x \in \cD(T^*)$. Finally, since $\cD(T^*)$ is dense in $\cH$, we have that $Z_{T^*} x = (Z_T)^* x$, i.e., $Z_{T^*} = (Z_T)^*$.

\bigskip

\noindent {\bf Step 3:} {\it Prove {\rm (iii)}.} \\

\bigskip

Using \eqref{eq:August25hn1} on $T$ and $T^*$ and the fact that $TT^* = T^* T$ we have
$$I_{\cH} - Z_T^* Z_T = (I_{\cH} + T^* T )^{-1} = (I_{\cH} + T T^* )^{-1} = I_{\cH} - Z_{T^*}^* Z_{T^*}.$$
Making use of Property (ii) we have that
$$I_{\cH} - Z_T^* Z_T = I_{\cH} - Z_T Z_T^*,$$
i.e., $Z_T$ is normal.
\end{proof}

We are now ready to state and prove a spectral theorem for unbounded normal operators on a quaternionic Hilbert space.

\begin{thm}
\label{thm:August27t1}
Fix $j \in \mathbb{S}$ and let $T$ be an unbounded right linear normal operator on $\cH$, i.e., $T \in \cL(\cH)$, $T \notin \cB(\cH)$ and $T$ is normal. There exists a Hilbert basis $\cN_j$ of $\cH$ and a unique spectral measure $E$ so that if $T$ is written with respect to $\cN_j$, then
\begin{equation}
\label{eq:August27re5}
T = \int_{\sigma_S(T) \, \cap \, \CC_j^+} p \, dE(p).
\end{equation}
\end{thm}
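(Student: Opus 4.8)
The plan is to transfer the spectral theorem from the bounded normal operator $Z_T = T C_T^{1/2}$ of \eqref{eq:August25nv1} back to $T$. By Theorem \ref{thm:August25er5}, $Z_T \in \cB(\cH)$ is normal, $\| Z_T \| \leq 1$, and $C_T = I_{\cH} - Z_T^* Z_T$. First I apply Theorem \ref{thm:August25j1} to $Z_T$: this furnishes a Hilbert basis $\cN_j$ of $\cH$ and a unique spectral measure $F$ on $\mathfrak{B}(\sigma_S(Z_T) \cap \CC_j^+)$ with $Z_T = \int_{\sigma_S(Z_T) \cap \CC_j^+} q \, dF(q)$; by Theorem \ref{thm:Sept15uu1} this domain lies in $\overline{\DD} \cap \CC_j^+$. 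All operators below are written with respect to $\cN_j$, so that $J = L_j$ and every $c \in \CC_j$ commutes with every $F(\sigma)$. Since $C_T = (I_{\cH} + T^* T)^{-1}$ is injective and $\langle C_T x, x \rangle = \int_{\sigma_S(Z_T) \cap \CC_j^+} (1 - |q|^2) \, d\langle F(q) x, x \rangle$ for all $x$, any nonzero vector in the range of $F(\{ q : |q| = 1 \})$ would satisfy $\langle C_T x, x \rangle = 0$, hence $C_T^{1/2} x = 0$; therefore $F(\{ q : |q| = 1 \}) = 0$.

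The crucial step is to realize $T$ as a functional-calculus image of $Z_T$. On the full-measure set $\Omega' := \sigma_S(Z_T) \cap \CC_j^+ \cap \DD$ define $\psi(q) = q(1 - |q|^2)^{-1/2}$, extended by $\infty$ on $\{ |q| = 1 \}$, so that $\psi \in \mathscr{B}_{\infty}(F, \sigma_S(Z_T) \cap \CC_j^+, \CC_j)$; its restriction to $\Omega'$ is a homeomorphism onto a closed subset of $\CC_j^+$, with inverse $\eta(p) = p(1 + |p|^2)^{-1/2}$. Writing $\psi = g \cdot r$ with $g(q) = q$ and $r(q) = (1 - |q|^2)^{-1/2}$, Theorem \ref{thm:Nov30yy1} gives $r(Z_T) = (I_{\cH} - Z_T^* Z_T)^{-1/2} = C_T^{-1/2}$ and Theorem \ref{thm:Nov30k2}(iii) gives $\psi(Z_T) = \overline{g(Z_T) r(Z_T)} = \overline{Z_T C_T^{-1/2}}$. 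Now $\cD(C_T^{-1/2}) = \Ran(C_T^{1/2}) \subseteq \cD(T)$ (an inclusion already recorded in the proof of Theorem \ref{thm:August25er5}) and $Z_T C_T^{-1/2} x = T C_T^{1/2} C_T^{-1/2} x = Tx$ there, so $Z_T C_T^{-1/2} \subseteq T$ and hence $\psi(Z_T) \subseteq \overline{T} = T$. Since $\psi(Z_T)$ is closed and normal (Theorem \ref{thm:Nov30k2}(iv)), so $\cD(\psi(Z_T)) = \cD(\psi(Z_T)^*)$, Lemma \ref{lem:Aug25yq1} applied to the normal operator $\psi(Z_T)$ and its extension $T$ forces $T = \psi(Z_T)$.

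Next I apply the change-of-variables Lemma \ref{lem:Nov30gh2} to $F|_{\Omega'}$ with the homeomorphism $g = \psi$ and $h = \mathrm{id}$, obtaining a spectral measure $\widetilde{E}$ on $\mathfrak{B}(\psi(\Omega'))$, given by $\widetilde{E}(\sigma) = F(\psi^{-1}(\sigma))$, with $\int_{\psi(\Omega')} p \, d\widetilde{E}(p) = \psi(Z_T) = T$. To pin down the domain of integration I would first establish the auxiliary fact that whenever $N \in \cL(\cH)$ is normal and $N = \int_{\Omega} p \, dG(p)$ for a spectral measure $G$ on a closed $\Omega \subseteq \CC_j^+$, one has $\mathrm{supp}(G) = \sigma_S(N) \cap \CC_j^+$: indeed $\mathcal{R}_s(N) = \int_{\Omega} (p^2 - 2\mathrm{Re}(s) p + |s|^2) \, dG(p)$, whose integrand equals $(p - s)(p - \bar{s})$ in $\CC_j$, and Theorem \ref{thm:Nov30yy1} shows $s \in \rho_S(N)$ precisely when this integrand is bounded away from $0$ on $\mathrm{supp}(G)$, that is when neither $s$ nor $\bar{s}$ lies in $\mathrm{supp}(G)$ — which, since $\mathrm{supp}(G) \subseteq \CC_j^+$ and $\sigma_S(N)$ is axially symmetric, means $s \notin \mathrm{supp}(G)$. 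Applying this to $Z_T$ gives $\mathrm{supp}(F) = \sigma_S(Z_T) \cap \CC_j^+$, hence $\mathrm{supp}(\widetilde{E}) = \psi(\Omega')$ (using $F(\{|q| = 1\}) = 0$ and that $\psi$ is a homeomorphism on $\Omega'$); applying it to $T$ gives $\mathrm{supp}(\widetilde{E}) = \sigma_S(T) \cap \CC_j^+$. Thus $\psi(\Omega') = \sigma_S(T) \cap \CC_j^+$ and $E := \widetilde{E}$ satisfies \eqref{eq:August27re5}.

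For uniqueness, let $E'$ be any spectral measure on $\mathfrak{B}(\sigma_S(T) \cap \CC_j^+)$ with $T = \int p \, dE'(p)$. Since $\eta$ is bounded with $|\eta| < 1$, Theorems \ref{thm:Nov30k2} and \ref{thm:Nov30yy1} give $\int \eta(p) \, dE'(p) = \overline{T(I_{\cH} + T^* T)^{-1/2}} = T C_T^{1/2} = Z_T$; so pushing $E'$ forward by $\eta$ yields a spectral measure $F'$ on $\mathfrak{B}(\sigma_S(Z_T) \cap \CC_j^+)$ with $Z_T = \int q \, dF'(q)$, whence $F' = F$ by the uniqueness clause of Theorem \ref{thm:August25j1}; pulling back through the bijection between $\Omega'$ and $\sigma_S(T) \cap \CC_j^+$ gives $E' = E$. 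I expect the main difficulty to lie in the second and third paragraphs: verifying the operator identity $T = \psi(Z_T)$ with the correct domains, and carrying out the support/spectral-mapping identification $\sigma_S(T) \cap \CC_j^+ = \psi(\sigma_S(Z_T) \cap \CC_j^+ \cap \DD)$, both of which require careful use of the $\mathscr{B}_{\infty}$-functional calculus and attention to the possible non-closedness of $T^2$ entering $\mathcal{R}_s(T)$.
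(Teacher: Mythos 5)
Your proposal is correct and follows essentially the same route as the paper: the same transformation $Z_T = TC_T^{1/2}$, the same application of Theorem \ref{thm:August25j1} to $Z_T$, the same function $\varphi(p)=p(1-|p|^2)^{-1/2}$ combined with Lemma \ref{lem:Aug25yq1} to identify $\varphi(Z_T)=T$, the same push-forward of the spectral measure via Lemma \ref{lem:Nov30gh2}, and the same uniqueness argument by transferring back to $Z_T$. The only difference is that you explicitly justify the identification of the support of the transported measure with $\sigma_S(T)\cap\CC_j^+$, a point the paper passes over silently.
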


\begin{proof}
The proof is broken into steps.

\bigskip

\noindent {\bf Step 1:} {\it Show that a spectral measure $E$ exists so that \eqref{eq:August27re5} holds.}

\bigskip

Let $\BB = \{ p \in \HH: | p | < 1 \}$, $\partial \BB = \{ p \in \HH: | p | = 1 \}$ and $\overline{\BB} = \BB \cup \partial \BB$. If $T$ is normal, then using Properties (i) and (iii) in Theorem \ref{thm:August25er5} we get that $\| Z_T \| \leq 1$ and $Z_T$ is normal, respectively. Thus, we may use Theorem \ref{thm:August25j1} to obtain a Hilbert basis $\cN_j$ of $\cH$ so that $J = L_j$ and also a uniquely determined spectral measure $F$ on $\sigma_S(Z_T) \cap \CC_j^+$ so that
\begin{equation}
\label{eq:August27rh1}
f(Z_T) = \int_{\sigma_S(Z_T) \cap \CC_j^+} f(p) \, dF(p)  \quad {\rm for} \quad f \in \mathscr{C}(\sigma_S(Z_T) \cap \CC_j^+, \CC_j).
\end{equation}
In addition, it follows from Theorem 3.2.6 in \cite{CSS} that
$$\sigma_S(Z_T) \subseteq \{ p \in \HH: |p| \leq \| Z_T \| \}$$
and hence
$$\sigma_S(Z_T) \cap \CC_j^+ \subseteq \overline{\BB} \cap \CC_j^+.$$

If $x \in \cH$ and $\sigma \in \mathfrak{B}(\sigma_S(T)  \cap  \CC_j^+)$, then, in view of, item (v) in Lemma \ref{lem:Nov29w1} and (\ref{eq:August27rh1}), we have
\begin{equation}
\label{eq:August27ww1}
\langle ( I_{\cH} - Z_T^* Z_T ) F(\sigma) x, F(\sigma) x \rangle = \int_{\sigma} (  1 - |p|^2 ) d\langle F(p) x, x \rangle.
\end{equation}
Recall that $I_{\cH} - Z_T^* Z_T = (I_{\cH} + T^* T)^{-1}$, i.e., ${\rm Ker}(I_{\cH} - Z_T^* Z_T) = \{ 0 \}$. Thus, using \eqref{eq:August27ww1} with
$$\sigma = \partial \overline{\mathbb{B}} \cap \CC_j^+$$ we get that ${\rm supp}\;F \subseteq \overline{\BB} \cap \CC_j^+$ and $F( \partial \BB \cap \CC_j^+) = 0$. Therefore,
$$F(\BB \cap \CC_j^+) = F [ (\overline{\BB}\cap \CC_j^+) \setminus \partial (\BB\cap \CC_j^+) ] = I_{\cH}.$$

If $\varphi(p) = p(1 - |p|^2)^{-1/2} $, then $\varphi \in \mathscr{B}_{\infty}(F, \sigma_S(Z_T)\cap \CC_j^+, \CC_j)$. In view of item (iii) and (v) of Theorem \ref{thm:Nov30k2} we have
$$\varphi(Z_T) = \left( \int_{\overline{\BB} \cap \CC_j^+} p \, dF(p) \right) \left( \int_{\overline{\BB} \cap \CC_j^+} \frac{1}{\sqrt{1-|p|^2}} \, dF(p) \right)$$
and $\cD(\varphi(Z_T)) = \cD(g(Z_T))$, where $g \in \mathscr{B}_{\infty}(F, \sigma_S(Z_T) \cap \CC_j^+, \CC_j)$ is given by
$$g(p) = \frac{1}{\sqrt{1-|p|^2}}.$$
Using Theorem \ref{thm:Nov30yy1}, we have
$$g(T) = (1/g)(T)^{-1}.$$
Consequently, we may use item (i) in Corollary \ref{cor:Nov30ye1} to obtain
$$g(T) = \left\{ \left(\int_{\overline{\BB} \cap \CC_j^+} (1 - |p|^2 ) \, dF(p) \right)^{1/2} \right\}^{-1}.$$
Putting these observations together, we obtain
\begin{equation}
\label{eq:August27yr1}
\varphi(Z_T) = Z_T (C_T^{1/2})^{-1}.
\end{equation}

Since $Z_T = T C_T^{1/2}$ we obtain $\varphi(Z_T) \subseteq T$. Using $C_T = (I_{\cH} - Z_T^* Z_T )^{1/2}$, we get that $\varphi(Z_T) \subseteq T$. Thus, using Lemma \ref{lem:Aug25yq1} we get that
$$\varphi(Z_T) = T.$$
Let $E(\sigma) = F(\varphi^{-1}(\sigma))$, where
$$\varphi^{-1}(\sigma) = \{ p \in \HH: \varphi(p) \in \sigma \} \quad {\rm for} \quad \sigma \in \mathfrak{B}(\sigma_S(T)  \cap  \CC_j^+).$$
It is readily checked that $E = F(\varphi^{-1})$ defines a spectral measure on $\CC_j^+$ and thus using Lemma \ref{lem:Nov30gh2} we have
\begin{equation}
\label{eq:August27tt1}
T = \int_{\overline{\BB} \cap \CC_j^+} \varphi(p) \, dF(p)  = \int_{\sigma_S(T) \, \cap \, \CC_j^+} p \, dE(p).
\end{equation}

\bigskip

\noindent {\bf Step 2:} {\it Show that $E$ from Step 1 is unique.}

\bigskip

If $E$ and $\widetilde{E}$ are spectral measures on $\sigma_S(T) \cap \CC_j^+$ which satisfy \eqref{eq:August27re5}, then $F = E(\varphi)$ and $\widetilde{F} = \widetilde{E}(\varphi)$ are both spectral measures so that
\begin{equation}
\label{eq:August27yq1}
Z_T = \int_{\overline{\BB} \cap \CC_j^+} p \, dF(p)   = \int_{\overline{\BB} \cap \CC_j^+} p \, d\widetilde{F}(p).
\end{equation}
Let $\mathscr{P}(\sigma_S(T) \cap \CC_j^+, \CC_j)$ denote the space of $\CC_j$-valued polynomials $\psi(p) = \sum_{a=0}^b \psi_a p^a$ on $\sigma_S(T) \cap \CC_j^+$.
In view of Lemma \ref{lem:Nov29w1}, \eqref{eq:August27yq1} yields
\begin{align*}
\langle \psi(Z_T)x, x \rangle =& \; \int_{\sigma_S(T) \, \cap \, \CC_j^+} \psi(p) d \langle F(p)x, x \rangle \\
=& \; \int_{\sigma_S(T) \, \cap \, \CC_j^+} \psi(p) d \langle \widetilde{F}(p)x, x \rangle, \quad \psi \in \mathscr{P}(\sigma_S(T) \cap \CC_j^+, \CC_j).
\end{align*}
As $\mathscr{P}(\sigma_S(T) \cap \CC_j^+, \CC_j)$ is a dense subset of $\mathscr{C}(\sigma_S(T) \cap \CC_j^+, \CC_j)$ we have that
$$\int_{\sigma_S(T) \, \cap \, \CC_j^+} \psi(p) d \langle F(p)x, x \rangle = \int_{\sigma_S(T) \, \cap \, \CC_j^+} \psi(p) d \langle \widetilde{F}(p)x, x \rangle.$$
Finally, use the uniqueness in Theorem \ref{thm:August25j1} to obtain that $F = \widetilde{F}$ and hence $E = \widetilde{E}$.
\end{proof}

\begin{cor}
\label{cor:Dec1yt1}
In the setting of Theorem {\rm \ref{thm:August27t1}}, the following statements hold{\rm :}
\begin{enumerate}
\item[(i)] If $T \in \cL(\cH)$ is a positive operator, then there exists a unique positive operator $W \in \cL(\cH)$ so that $W^2 = T$.
\item[(ii)] $T \in \cL(\cH)$ is self-adjoint if and only if
\begin{equation}
\label{eq:Nov30kk1}
T = \int_{\RR} t \, dE(t).
\end{equation}
\item[(iii)] $T \in \cL(\cH)$ is anti self-adjoint if and only if
\begin{equation}
\label{eq:Nov30hh2}
T = \int_{[0, \infty)} j t \, dE(t).
\end{equation}
\end{enumerate}
\end{cor}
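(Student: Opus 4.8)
The plan is to read everything off Theorem \ref{thm:August27t1}, using Theorem \ref{thm:Sept15yz1} to locate the support of the spectral measure and the functional calculus of Section \ref{sec:SIs} to manipulate the resulting integral representation. Throughout I would fix $j \in \mathbb{S}$, write $T$ with respect to the Hilbert basis $\cN_j$ furnished by Theorem \ref{thm:August27t1}, and let $E$ be the unique spectral measure on $\Omega_j^+ := \sigma_S(T) \cap \CC_j^+$ with $T = \int_{\Omega_j^+} p\, dE(p)$.

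For (i): since $T$ is positive, Theorem \ref{thm:Sept15yz1}(i) gives $\sigma_S(T) \subseteq [0,\infty)$, so $\Omega_j^+ = \sigma_S(T) \subseteq [0,\infty)$ and $E$ is carried by $[0,\infty)$. I would set $g(p) = \sqrt{p}$, a real-valued, hence $\CC_j$-valued, function in $\mathscr{B}_{\infty}(E, \Omega_j^+, \CC_j)$, and put $W := g(T)$. By Theorem \ref{thm:Nov30k2}(i), $W^* = \bar g(T) = g(T) = W$, and by Theorem \ref{thm:Nov30k2}(iv), $W^* W = (g\bar g)(T) = (g^2)(T) = T$, the last equality because $g(p)^2 = p$ is the identity function on $\Omega_j^+$; hence $W^2 = T$. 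Positivity of $W$ follows from $\langle Wx, x \rangle = \int_{\Omega_j^+} \sqrt{p}\, d\langle E(p)x, x\rangle \geq 0$, valid since $\langle E(\cdot)x, x\rangle$ is a positive measure and $\sqrt{p} \geq 0$ on $[0,\infty)$. For uniqueness: any positive $W_1$ with $W_1^2 = T$ is normal, so the spectral theorem gives $W_1 = \int_{[0,\infty)} t\, dF_1(t)$, whence $T = W_1^2 = \int_{[0,\infty)} t^2\, dF_1(t)$ by Theorem \ref{thm:Nov30k2}(iv); applying Lemma \ref{lem:Nov30gh2} with the bijection $t \mapsto t^2$ of $[0,\infty)$ shows that the pushforward of $F_1$ represents $T$, so the uniqueness clause of Theorem \ref{thm:August27t1} pins down $F_1$, and then a second application of Lemma \ref{lem:Nov30gh2} (with $t \mapsto \sqrt{t}$) gives $W_1 = g(T) = W$; this is exactly the argument used when $\cH$ is a complex Hilbert space.

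For (ii) and (iii): if $T = T^*$, then Theorem \ref{thm:Sept15yz1}(ii) gives $\sigma_S(T) \subseteq \RR$, so $\Omega_j^+ \subseteq \RR$, and extending $E$ by zero to a spectral measure on $\RR$ yields $T = \int_{\RR} t\, dE(t)$. If $T = -T^*$, then Theorem \ref{thm:Sept15yz1}(iii) gives $\sigma_S(T) \subseteq \{p \in \HH : \re p = 0\}$, so $\Omega_j^+ \subseteq \{jt : t \geq 0\}$; I would then invoke Lemma \ref{lem:Nov30gh2} with the change of variable $js \mapsto s$ (so that $\widetilde E(\sigma) := E(j\sigma)$ is a spectral measure on $[0,\infty)$ and $h(s) = js$), and, renaming $\widetilde E$ as $E$, obtain $T = \int_{[0,\infty)} jt\, dE(t)$. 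Conversely, if $T = \int_{\RR} t\, dE(t)$, the integrand $f(t) = t$ is real, so by Theorem \ref{thm:Nov30k2}(i), $T^* = \bar f(T) = f(T) = T$; if $T = \int_{[0,\infty)} jt\, dE(t)$, the integrand $f(t) = jt$ satisfies $\bar f(t) = \overline{jt} = -jt = -f(t)$ since $t$ is real and $\bar j = -j$, so $T^* = \bar f(T) = -f(T) = -T$. In both directions $T$ is densely defined and closed by Theorem \ref{thm:Nov30k2}(iv), so it is self-adjoint, respectively anti self-adjoint.

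The main obstacle I anticipate is keeping the square-root identity in (i) a genuine operator equality $W^2 = T$ rather than merely a closure statement $\overline{W^2} = T$; this is why I would invoke Theorem \ref{thm:Nov30k2}(iv), which delivers $W^* W = (g\bar g)(T)$ on the nose, instead of Theorem \ref{thm:Nov30k2}(iii). The second delicate point is the uniqueness in (i), which amounts to transporting the uniqueness of the spectral measure of $T$ across the maps $t \mapsto t^2$ and $t \mapsto \sqrt t$ via Lemma \ref{lem:Nov30gh2}. Everything else — the change of variable $js \leftrightarrow s$ in (iii), checking $E$-measurability and membership in $\mathscr{B}_{\infty}(E, \Omega_j^+, \CC_j)$ of the relevant integrands, and the extension of $E$ by zero — is routine.
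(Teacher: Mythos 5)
Your proposal is correct and follows essentially the same route as the paper, which simply defers to the argument of Corollary \ref{cor:Nov30ye1}: locate the support of $E$ via Theorem \ref{thm:Sept15yz1}, define $W=\sqrt{\cdot}\,(T)$ through the functional calculus, and get uniqueness from the uniqueness of the spectral measure as in the complex case. You supply more detail than the paper does (in particular the correct use of $\mathscr{B}_{\infty}$ and Theorem \ref{thm:Nov30k2}(iv) to get $W^*W=(g\bar g)(T)$ exactly, and the change of variables in (iii)), but the underlying argument is the same.
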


\begin{proof}
Using Theorem \ref{thm:August27t1}, the proof is completed as in Corollary \ref{cor:Nov30ye1}.
\end{proof}

\end{document}